\documentclass[12pt]{article}
\usepackage{amsmath,amsfonts,amsthm,amssymb,mathrsfs,mathtools}
\usepackage{graphicx}
\usepackage[usenames,dvipsnames]{color}
\usepackage{float}
\usepackage{graphicx}
\usepackage{subfigure}
\usepackage{caption}
\usepackage{blindtext}
\usepackage{cite}
\usepackage{url}
\usepackage{caption}
\usepackage{epstopdf}
\usepackage{hyperref}

\usepackage{color}
\usepackage[latin1]{inputenc}

\newcommand{\dint}{\displaystyle\int}

\theoremstyle{plain}
\newtheorem{theorem}{Theorem}[section]

\newtheorem{corollary}[theorem]{Corollary}
\newtheorem{lemma}[theorem]{Lemma}
\newtheorem{proposition}[theorem]{Proposition}

\theoremstyle{definition}

\theoremstyle{remark}
\newtheorem{remark}[theorem]{Remark}
\newtheorem{example}[theorem]{Example}
\numberwithin{equation}{section}
\numberwithin{theorem}{section}

\usepackage{a4wide}
\usepackage{geometry}
\begin{document}
	\renewcommand{\thefootnote}{\fnsymbol{footnote}}
	\begin{center}
		{\Large \textbf{Stopping Times in the Filtration of a Brownian Motion Stopped at its Last Passage Time
		}} \\[0pt]
		~\\[0pt] \textbf{Mohammed Louriki}\footnote[1]{Mathematics Department, Faculty of Sciences Semalalia, Cadi Ayyad University, Boulevard Prince Moulay Abdellah,	P. O. Box 2390, Marrakesh 40000, Morocco. E-mail: \texttt{m.louriki@uca.ac.ma}}
		\\[0pt]
	\end{center}
	\begin{abstract}

		We investigate the structural properties of the last passage time $\sigma_z^{\lambda}$ at level $z > 0$ of a Brownian motion with positive drift $\lambda > 0$, denoted $B^{\lambda} = (B_t + \lambda t)_{t \geq 0}$, in the filtration generated by the process $\xi^{\lambda,z} = (B^{\lambda}_{t \wedge \sigma_z^{\lambda}})_{t \geq 0}$. We compute the compensator of $\sigma_z^{\lambda}$ and establish that it is the unique totally inaccessible stopping time in the filtration of $\xi^{\lambda,z}$. Moreover, we provide a canonical decomposition of arbitrary stopping times: for any stopping time $T$, the restriction of $T$ to the set $\{T = \sigma_z^\lambda\}$ is totally inaccessible, while its restriction to $\{T \neq \sigma_z^\lambda\}$ is predictable.
		
		Although the paths of $\xi^{\lambda,z}$ are continuous, the process fails to satisfy the Feller property and is not strong Markov. Nevertheless, we show that its natural filtration is quasi-left-continuous. To overcome these limitations, we consider the extended process $\zeta^{\lambda,z} = (\mathbb{I}_{\{t < \sigma_z^\lambda\}}, \xi_t^{\lambda,z})_{t \geq 0}$, and prove that it is a Feller process. We compute its infinitesimal generator, which allows us to characterize the associated class of martingales and identify the solutions to certain partial differential equations.
	\end{abstract}
	\smallskip
	\noindent 
	\textbf{Keywords:} (Strong) Markov process, Feller process, semimartingale, infinitesimal generator, quasi-left-continuous filtration,  decomposition of stopping times.
	\\
	\\ 
	\\
	\textbf{MSC 2020:} 60J25, 60G53, 60G40, 60G44, 60J35, 60J55, 60G51.
	\section{Introduction}
	
	\quad\,\, The study of last passage times (or last exit times) for diffusion processes has a long history in probability theory, with foundational works by Doob \cite{Doob}, Nagasawa \cite{Nagasawa}, Kunita and Watanabe \cite{KW}, Salminen \cite{Salminen1984}, Rogers and Williams \cite{RW}, Chung and Walsh \cite{CW}, and Revuz and Yor \cite{RY}, among others. These studies connect last passage times to key concepts such as transience, recurrence, Doob's $h$-transform, time reversal, and Martin boundary theory. For linear diffusions, Salminen \cite{Salminen1984} expressed the distribution of the last passage time via the transition density and the Green function (see also \cite{S}, \cite{BS} and \cite{EK}). Egami and Kevkhishvili \cite{EK2020} later extended this approach using time-reversal techniques, offering further probabilistic insights. A key contribution of \cite{EK2020} is a decomposition of the diffusion path into components corresponding to regions above and below a fixed threshold. Getoor and Sharpe \cite{GS} analyzed last passage times and their joint distribution with the process location, while Pitman and Yor \cite{PY} obtained density formulas using scale functions for regular diffusions on $\mathbb{R}_+$.
	
	Applications of last passage times in financial modeling are diverse and well documented. A comprehensive overview is provided by Nikeghbali and Platen \cite{NP}, who highlight their relevance in modeling default risk, insider trading, and option pricing. For instance, Elliott et al. \cite{EJY} and Jeanblanc and Rutkowski \cite{JR} studied the pricing of defaultable claims whose payoffs depend on the last time a firm's value falls below a critical level. Related developments appear in \cite{CN} and in Chapters 4 and 5 of \cite{JYC}. Egami and Kevkhishvili \cite{EK2020} proposed a new risk management framework based on the last passage time of a firm's leverage ratio to a specified warning threshold, emphasizing the relevance of the time between this event and actual default. To capture information asymmetry, Imkeller \cite{I} considered a setting where the last passage time of a Brownian motion driving a stock price process is not a stopping time for a regular trader, but becomes a stopping time for an insider through progressive enlargement of the filtration. This setting illustrates how access to additional information via last passage times can generate arbitrage opportunities. Finally, last passage times have also been applied to the pricing of European put and call options. Profeta et al. \cite{PRY} showed that option prices can be expressed in terms of the distribution of last passage times. See also Cheridito et al. \cite{CNP} for related developments.
	
	Motivated by these developments, the present paper adopts a structural and filtration-theoretic perspective on the last passage time $\sigma_z^{\lambda}$ of a Brownian motion with positive drift. We investigate its compensator and accessibility properties within the filtration generated by the stopped process $\xi^{\lambda,z} = (B^{\lambda}_{t \wedge \sigma_z^{\lambda}})_{t \geq 0}$. We prove that $\sigma_z^{\lambda}$ is the \emph{only} totally inaccessible stopping time in this filtration and provide a canonical decomposition of arbitrary stopping times. Although $\xi^{\lambda,z}$ has continuous paths, it fails to satisfy the Feller property and is not strong Markov. Nevertheless, we show that its natural filtration is quasi-left-continuous.
	
	From a modeling perspective, the process $\xi^{\lambda,z}$ may represent a firm's observed risk level, with default or liquidation occurring at the last time the firm reaches a critical threshold $z$. After this point the process stops: the firm exits the market or is no longer monitored. The key feature is that this terminal event is \emph{endogenous} and constitutes the only totally inaccessible event from the market's perspective. This provides an alternative to reduced-form models in which default is introduced exogenously. In this sense, our construction yields a structurally simple yet non-trivial framework in which a unique totally inaccessible time arises endogenously.
	It is important to emphasize that our framework differs from the classical approach based on the progressive enlargement of filtrations (see, e.g., \cite{J, JY1, JY2, AJ, AJR}), where one starts from a filtration $\mathbb{F}$ and enlarges it so that a given random time becomes a stopping time. In that setting, last passage times of diffusions belong to the class of honest times and their structural properties have been studied. In particular, it is proved (see Jeulin \cite[p.~65]{J}) that if $\mathbb{G}$ denotes the progressive enlargement of $\mathbb{F}$ by a random time $\tau$, then $\tau$ is $\mathbb{G}$-totally inaccessible if and only if it avoids $\mathbb{F}$-stopping times.
		In contrast, in the present work we construct from the outset the filtration generated by the stopped process
		\[
		\xi_t^{\lambda,z} = B_{t \wedge \sigma_z^\lambda}^{\lambda},
		\qquad 
		\mathcal{F}_t^{\xi^{\lambda,z}} = \sigma(\xi_s^{\lambda,z},\,0\le s\le t).
		\]
		This filtration contains only the information carried by the stopped diffusion and does not observe the Brownian evolution after the last passage time. In particular, it differs from the progressive enlargement filtration
		\[
		\mathcal{F}_t^B \vee \sigma(\sigma_z^\lambda \wedge t).
		\]
		Indeed, on the event $\{\sigma_z^\lambda < t\}$ we have $\xi_t^{\lambda,z}=z$, while $B_t^\lambda$ continues to evolve randomly, showing that the two filtrations cannot coincide.
		Within this smaller filtration we prove that the last passage time $\sigma_z^\lambda$ is the unique totally inaccessible stopping time.
	
	We proceed as follows. Fix $\lambda > 0$ and $z > 0$. Let $B^\lambda$ be a Brownian motion with drift $\lambda$, and define its last passage time at level $z$ by $\sigma_z^{\lambda} := \sup\{t \geq 0 : B^\lambda_t = z\}$. We define the process $\xi^{\lambda,z}$ as the Brownian motion with drift $\lambda$ stopped at this time: $\xi^{\lambda,z}_t = B^\lambda_{t \wedge \sigma_z^{\lambda}}$. The time $\sigma_z^{\lambda}$ is thus a stopping time with respect to the completed filtration $\mathbb{F}^{\xi^{\lambda,z}, c}$. Our main objective is to characterize this stopping time and study the structure of the filtration it generates.
	
	A notable feature of our model is that the process $\xi^{\lambda,z}$ is not a Feller process and does not satisfy the strong Markov property. As a result, classical results concerning stopping times in Feller filtrations such as the quasi-left-continuity of the filtration and the characterization of stopping times based on the continuity or discontinuity of the process at the stopping time do not directly apply; see Theorem \ref{thmappendixpredictable}. To study the nature of $\sigma_z^{\lambda}$, we adopt a compensator-based approach. It is well known that a stopping time is predictable if and only if its compensator is piecewise constant, accessible if and only if its compensator is purely discontinuous, and totally inaccessible if and only if its compensator is continuous; see Theorem \ref{thmappendixcompensator} and also \cite{BBEcompensator}, where the authors use this approach to analyze the default time in an information-based framework.

	We begin by establishing the semimartingale decomposition of $\xi^{\lambda,z}$ in its own filtration. We prove that $\mathbb{F}^{\xi^{\lambda,z}}$ is right-continuous and derive an explicit decomposition involving a Brownian motion stopped at $\sigma_z^{\lambda}$. For $t < \sigma_z^{\lambda}$, the process $z - \xi^{\lambda,z}_t$ follows the dynamics of a bang-bang Brownian motion see \cite{She}, which arises in P. L\'evy's distributional properties to the case of a Brownian motion with drift see \cite{GShiryaev}. Using Tanaka's formula and the occupation time formula, we compute the local time $L^{\lambda,z}(t, x)$ of $\xi^{\lambda,z}$ and express the compensator $A^{\lambda,z}_t$ of $\sigma_z^{\lambda}$ in terms of this local time. We show that this compensator is continuous, which implies that $\sigma_z^{\lambda}$ is a totally inaccessible stopping time. Moreover, the accumulated local time at level $z$, $L^{\lambda,z}(\sigma_z^{\lambda}, z)$, follows an exponential distribution with parameter $\lambda$.
	
	To analyze the quasi-left-continuity of the filtration \(\mathbb{F}^{\xi^{\lambda,z}, c}\), we construct an augmented process
	$\zeta^{\lambda,z}_t := ( \mathbb{I}_{\{t < \sigma_z^{\lambda}\}},\, \xi^{\lambda,z}_t )$.
	We prove that \(\zeta^{\lambda,z}\) is a Feller process and generates the same completed filtration as \(\xi^{\lambda,z}\). As a consequence, the filtration \(\mathbb{F}^{\xi^{\lambda,z}, c}\) is quasi-left-continuous, and \(\sigma_z^{\lambda}\) is the only totally inaccessible stopping time in this filtration. Moreover, any \(\mathbb{F}^{\xi^{\lambda,z}, c}\)-stopping time \(T\) admits the decomposition
	\[
	T = T_{\{T = \sigma_z^{\lambda}\}} \wedge T_{\{T \neq \sigma_z^{\lambda}\}},
	\]
	where \(T_{\{T = \sigma_z^{\lambda}\}}\) is totally inaccessible and \(T_{\{T \neq \sigma_z^{\lambda}\}}\) is predictable. This highlights the unique structural role of \(\sigma_z^{\lambda}\) as an endogenous, totally inaccessible time within the filtration.
	
	Finally, we compute the infinitesimal generator of \(\zeta^{\lambda,z}\), which characterizes the associated class of martingales and provides a probabilistic representation of the solutions to a family of partial differential equations.

	The paper is structured as follows: Section~\ref{SectionStrongMarkov} examines the Markov, strong Markov, and Feller properties of the process $\xi^{\lambda,z}$. Section \ref{Sectionsemimartingale} focuses on its semi-martingale property. In Section~\ref{Sectioncompensator}, we study the local time of $\xi^{\lambda,z}$ and the compensator of $\sigma_z^{\lambda}$. Section \ref{Sectionaccessiblepredictabletimes} deals with the quasi-left-continuity of the completed filtration generated by $\xi^{\lambda,z}$ and the nature of stopping times within this filtration. Section \ref{sectiongenerator} is devoted to the computation of the generator associated to $\zeta^{\lambda,z}$, and provide the solutions to certain partial differential equations. In Appendix~\ref{Appendix_Foundation}, we provide several preliminary results that are essential for establishing the main findings of this paper. Appendix~\ref{Appendix_section_Proof} contains the proofs of certain auxiliary results. In Appendix~\ref{Appendix_integrals}, we present the detailed computations of the integrals used in the proofs of the main results. Finally, in Appendix~\ref{Appendix_Kallenberg}, for ease of reference, we include a brief overview of the results from~\cite{K} that characterize stopping times using martingales, compensators, and Feller processes.
	
	The following notation will be used throughout the paper: For a complete probability space $(\Omega,\mathcal{F},\mathbb{P})$, $\mathcal{N}_{\mathbb{P}}$ denotes the
	collection of $\mathbb{P}$-null sets. If $\theta$ is a random variable, then $\mathbb{P}_{\theta}$ and $F_{\theta}$ are its law and its distribution function under $\mathbb{P}$, respectively. 
	If $E$ is a topological space, then the Borel $\sigma$-algebra over $E$ will be denoted by $\mathcal{B}(E)$. The characteristic function of a set $A$ is written $\mathbb{I}_{A}$. 
	The symmetric difference of two sets $A$ and $B$ is denoted by $A\Delta B$. The function $p(t, x, y)$, $x, y\in \mathbb{R}$, $t\in\mathbb{R}_+$, denotes the Gaussian density function with variance $t$ and mean $y$, if $y=0$, for simplicity of notation we write $p(t,x)$ rather than $p(t, x, 0)$.
	Finally for any process $Y=(Y_t,\, t\geq 0)$ on $(\Omega,\mathcal{F},\mathbb{P})$, we define by:
	\begin{enumerate}
		\item[(i)] $\mathbb{F}^{Y}=\bigg(\mathcal{F}^{Y}_t:=\sigma(Y_s, s\leq t),~ t\geq 0\bigg)$ the natural filtration of the process $Y$.
		\item[(ii)] $\mathbb{F}^{Y,c}=\bigg(\mathcal{F}^{Y,c}_t:=\mathcal{F}^{Y}_t\vee \mathcal{N}_{\mathbb{P}},\, t\geq 0\bigg)$ the completed natural filtration of the process $Y$.
		\item[(iii)] $\mathbb{F}^{Y,c}_{+}=\bigg(\mathcal{F}^{Y,c}_{t^{+}}:=\underset{{s>t}}\bigcap\mathcal{F}^{Y,c}_{s}=\mathcal{F}^{Y}_{t^{+}}\vee \mathcal{N}_{\mathbb{P}},\, t\geq 0\bigg)$ the smallest filtration containing $\mathbb{F}^{Y}$ and satisfying the usual hypotheses of right-continuity and completeness.
	\end{enumerate}
	\section{Markov, Strong Markov and Feller Properties}
	\label{SectionStrongMarkov}
		Last passage times of diffusions arise naturally in the study of transient processes and belong to the class of honest times, see for instance \cite{AJ,JYC,J,Profeta}. Their structural properties have been extensively investigated in the literature on enlargement of filtrations; see, for instance, \cite{JYC,PY,PRY}, as well as \cite{J}. In particular, their predictable compensators and associated semimartingale decompositions are known in a fairly general setting; see, e.g., \cite{AJ,PY}. 
		Moreover, Girsanov transformations can often be used to relate their distributions to classical cases; see, e.g., \cite{MY}.\\		
		In the present paper we focus on the case of a Brownian motion with drift. More precisely, let $\lambda>0$, $z>0$, and let $B^{\lambda}=(B_t+\lambda t,t\ge0)$ be a Brownian motion with positive drift $\lambda$. Define
		\[
		\sigma_z^{\lambda}=\sup\{t>0:B_t^{\lambda}=z\},
		\] 
		the last passage time of $B^{\lambda}$ at level $z$. Since the drift $\lambda$ is positive, the last passage time $\sigma_z^{\lambda}$ is almost surely finite. The random time $\sigma_z^{\lambda}$ is not an $\mathbb{F}^{B}$-stopping time. Moreover, its distribution is absolutely continuous with respect to the Lebesgue measure and is given by
		\begin{equation}
			\mathbb{P}(\sigma_z^{\lambda}\in \mathrm{d}r)=\lambda\,p(r,z,\lambda r)\,\mathrm{d}r.
			\label{eqsigmazlamdalaw}
		\end{equation}
		The $\mathbb{F}^{B}$-predictable compensator associated with $\sigma_z^{\lambda}$ is given by
		\begin{equation}
			A_t^{\lambda}=\frac{1}{2}\exp(2\lambda z)\,
			L_t^{\exp(-2\lambda z)}(\exp(-2\lambda B^{\lambda})),
		\end{equation}
		where $L^{\exp(-2\lambda z)}(\exp(-2\lambda B^{\lambda}))$ denotes the local time of 
		$\exp(-2\lambda B^{\lambda})$ at level $\exp(-2\lambda z)$ 
		(see \cite{MRY,JYC}). Furthermore, from Propositions 5.6.2.1 and 5.6.2.4 in \cite{JYC}, for any positive predictable process $H$, we have
		\begin{equation}
			\mathbb{E}[H_{\sigma_z^{\lambda}}\mid\sigma_z^{\lambda}=r]
			=\mathbb{E}[H_r\mid B_r^{\lambda}=z].
			\label{eqpredictableformulla}
		\end{equation}
		While these properties have been extensively studied in the enlargement-of-filtration framework, our aim is to analyze the random time $\sigma_z^{\lambda}$ within the filtration generated by the Brownian motion with drift $\lambda$ stopped at $\sigma_z^{\lambda}$, namely the process
	\begin{equation}
	\xi^{\lambda,z}=(B^{\lambda}_{t\wedge \sigma_z^{\lambda}}, t\geq 0).
	\end{equation}
	It is clear that if $\sigma_z^{\lambda}\leq t$, then $\xi_t^{\lambda,z}=z$. However, on the event $\{t<\sigma_z^{\lambda}\}$, we have for any Borel set $A$ and for any positive real number $t$,
	\begin{align*}
		\mathbb{P}(t<\sigma_z^{\lambda},\xi^{\lambda,z}_t\in A)&=\displaystyle\int_{t}^{+\infty}\mathbb{P}(t<\sigma_z^{\lambda},B^{\lambda}_t\in A\vert\sigma_z^{\lambda}=r )\,\mathbb{P}_{\sigma_z^{\lambda}}(\hbox{d}r)\\
		&=\lambda\,\displaystyle\int_{t}^{+\infty}\mathbb{P}(B^{\lambda}_t\in A\vert\sigma_z^{\lambda}=r )\,p(r,z,\lambda\,r)\hbox{d}r\\
		&=\lambda\,\displaystyle\int_{A}\displaystyle\int_{t}^{+\infty}\,p(r-t,z-y,\lambda\,(r-t))\hbox{d}r\,p(t,y,\lambda\,t)\hbox{d}y\\
		&=\displaystyle\int_{A}\exp(-\lambda(\vert z-y\vert -(z-y)))\,p(t,y,\lambda\,t)\hbox{d}y,
	\end{align*}
	where in the latter equality we have used the fact that 
	\begin{equation*}
		\displaystyle\int_{0}^{+\infty}\,p(r,x,\lambda\,r)\hbox{d}r=\dfrac{\exp(-\lambda(\vert x\vert-x))}{\lambda},\,\,\text{for all}\,x\in \mathbb{R}.
	\end{equation*}
	Hence, for any  positive real number $t$,
	\begin{equation}
		\mathbb{P}\left(\{\xi_t^{\lambda,z} = z\} \bigtriangleup \{\sigma_z^{\lambda}\leq t\}\right)=0.\label{eqmodification}
	\end{equation}
	Thus, the random time $\sigma_z^{\lambda}$ is an $\mathbb{F}^{\xi^{\lambda,z},c}$-stopping time. In the remainder of this section, we examine the Markov, strong Markov, and Feller properties of the process $\xi^{\lambda,z}$. See, for instance, \cite{BG}, \cite{K}, \cite{KS}, \cite{SP}, \cite{SW1} and \cite{SW2} for the definitions and properties of Markov, strong Markov, and Feller processes. Situations in which the strong Markov property fails have been studied in various contexts; see, for example, Millar \cite{Millar} and the references therein. For general conditions ensuring the strong Markov property, as well as additional counterexamples, we refer to \cite{D, DY, Y}. We now state a key result: the process $\xi^{\lambda,z}$ is Markovian with explicit transition densities, but it fails to satisfy the strong Markov property.
	\begin{theorem}[Markov property and failure of the strong Markov property of $\xi^{\lambda,z}$]
		The process $\xi^{\lambda,z}$ is a time homogeneous Markov process with respect to its natural filtration, with transition densities given by 
		\begin{multline}
			P_t(x,\mathrm{d}y)=\Biggl\{ \Bigg[\mathbb{I}_{\{x=z\}}+\bigg[\Phi\bigg(\lambda\sqrt{t}- \dfrac{\vert z-x \vert}{\sqrt{t}}\bigg)-\exp(2\lambda\,\vert z-x \vert)\,\Phi\bigg(-\lambda\sqrt{t}- \dfrac{\vert z-x \vert}{\sqrt{t}}\bigg)\bigg]\mathbb{I}_{\{x\neq z\}}\Bigg]
			\mathbb{I}_{\{y=z\}}\\
			+\Bigg[p(t,y-x,\lambda\,t)\,\exp(-\lambda\,\alpha(z,y))\exp(\lambda\,\alpha(z,x))\mathbb{I}_{\{x\neq z\}}\Bigg]\mathbb{I}_{\{y\neq z\}}
			\Biggr\}  \varrho(\mathrm{d}y),\label{eqsemigroup}
		\end{multline}
		where $\Phi$ is the standard normal distribution function,
		\begin{equation}
			\alpha(z,x)=\vert z-x \vert-(z-x),
		\end{equation}
		and  $\varrho$ is the measure defined on $\mathcal{B}(\mathbb{R})$ by
		\begin{equation}\label{measure}
			\varrho(dy)=\delta_z(\mathrm{d}y)+\mathrm{d}y.
		\end{equation}
		However, $\xi^{\lambda,z}$ is not a strong Markov process.
		\label{thmMarkov}
	\end{theorem}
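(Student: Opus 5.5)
The plan has two parts: the Markov property with the kernel \eqref{eqsemigroup}, and a counterexample to the strong Markov property built from a suitable hitting time of $z$.

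\textbf{Finite-dimensional distributions.} I will compute them directly, conditioning on $\sigma_z^{\lambda}$ as in the one-dimensional computation above. Take $0<t_1<\dots<t_n<t$ and Borel sets $A_1,\dots,A_n,A$. By \eqref{eqmodification}, the event $\{\xi^{\lambda,z}_{t_i}=z\}$ equals $\{\sigma_z^\lambda\le t_i\}$ up to a null set. So it suffices to treat the events $\{t_k<\sigma_z^\lambda\}\cap\{\xi_{t_i}^{\lambda,z}\in A_i,\ i\le k\}$ with $A_i\not\ni z$, intersected with $\{\sigma_z^\lambda\le t_{k+1}\}$.

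On $\{t_n<\sigma_z^\lambda\}$, the conditional law formula \eqref{eqpredictableformulla} tells us that, given $\sigma_z^\lambda=r$, the path on $[0,r]$ is a drifted Brownian bridge from $0$ to $z$. Its finite-dimensional density is
\[
\prod p(t_{i}-t_{i-1},y_i-y_{i-1},\lambda(t_i-t_{i-1}))\, p(r-t_n,z-y_n,\lambda(r-t_n))/p(r,z,\lambda r).
\]
Integrating against $\lambda p(r,z,\lambda r)\,\mathrm{d}r$ over $r>t_n$ and using $\int_0^\infty p(r,x,\lambda r)\,\mathrm{d}r=\lambda^{-1}e^{-\lambda\alpha(z,y_n)}$, with $x=z-y_n$, gives
\[
\mathbb{P}\bigl(t_n<\sigma_z^{\lambda},\,\xi_{t_i}^{\lambda,z}\in \mathrm{d}y_i,\ i\le n\bigr)=\prod_i p(t_i-t_{i-1},y_i-y_{i-1},\lambda(t_i-t_{i-1}))\,e^{-\lambda\alpha(z,y_n)}\,\mathrm{d}y_1\cdots \mathrm{d}y_n .
\]
This is a telescoping product of the kernels $p(t,y-x,\lambda t)e^{-\lambda\alpha(z,y)}e^{\lambda\alpha(z,x)}$, which is an $h$-transform with $h(x)=e^{-\lambda\alpha(z,x)}=\mathbb{P}_x(\text{hit } z)$.

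The atom at $z$ comes from subtracting the mass of $\{t<\sigma_z^\lambda\}$. The transition from $x\ne z$ to $z$ has mass
\[
1-\int p(t,y-x,\lambda t)\,e^{-\lambda\alpha(z,y)}e^{\lambda\alpha(z,x)}\,\mathrm{d}y .
\]
A routine Gaussian integral, splitting at $y=z$ and completing the square in $e^{-2\lambda(y-z)}$, should reduce this to the stated $\Phi$ expression. I need to check that it depends only on $|z-x|$, since for $x>z$ the factor $e^{\lambda\alpha(z,x)}=e^{2\lambda(x-z)}$ appears. The state $z$ is absorbing. Combining these, the finite-dimensional distributions are those of a homogeneous Markov chain with kernel $P_t$, and Chapman--Kolmogorov holds automatically, so $\xi^{\lambda,z}$ is Markov in $\mathbb{F}^{\xi^{\lambda,z}}$.

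\textbf{Failure of the strong Markov property.} Let $\tau=\inf\{t>0:\xi_t^{\lambda,z}=z\}$, the first hitting time of $z$. It is a stopping time of the (right-continuous, completed) natural filtration, and $\tau\le\sigma_z^\lambda<\infty$ a.s. At $\tau$ we have $\xi_\tau=z$. If the strong Markov property held, the process after $\tau$ would start afresh from $z$ under $P_t(z,\cdot)=\delta_z$, so $\xi$ would be constant after $\tau$, i.e.\ $\tau=\sigma_z^\lambda$ a.s. But $\mathbb{P}(\tau<\sigma_z^\lambda)>0$: Brownian motion with drift hitting $z$ returns to $z$ at later times a.s. (local recurrence at the hitting time), so in fact $\tau<\sigma_z^\lambda$ a.s. This contradicts the strong Markov property.

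\textbf{Main obstacle.} The delicate step is rigor in the Markov part: the natural filtration contains the event $\{\xi_t=z\}$, which is not exactly $\{\sigma_z^\lambda\le t\}$ but agrees with it only up to null sets. I will handle this with \eqref{eqmodification} at finitely many times, which is enough for finite-dimensional distributions. I will also verify the closed-form $\Phi$ expression through the Gaussian computation.
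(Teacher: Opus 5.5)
Your proposal is correct and follows essentially the paper's route. For the Markov part you use the same conditioning on $\sigma_z^{\lambda}=r$ (drifted bridge, change of measure, Fubini), which you simply carry through to an explicit $h$-transform form of the finite-dimensional laws with $h(x)=e^{-\lambda\alpha(z,x)}$. For the counterexample you use the same first hitting time $T_{\lambda,z}$ of $z$, tested against the absorbing kernel $P_t(z,\cdot)=\delta_z$. The only real difference is how you show the process still moves after $T_{\lambda,z}$: the paper obtains an explicit positive lower bound from the Brownian-bridge maximum distribution, whereas your argument ($T_{\lambda,z}<\sigma_z^{\lambda}$ a.s.\ by the strong Markov property of $B^{\lambda}$ at $T_{\lambda,z}$ and regularity of $z$) is shorter and equally rigorous.
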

	\begin{proof}
		The Markov property of $\xi^{\lambda,z}$, along with the explicit form of its transition densities, is established in Appendix~\ref{Appendix_section_Proof}. We now show that $\xi^{\lambda,z}$ does not satisfy the strong Markov property. Let $T_{\lambda,z}$ be the first time $\xi^{\lambda,z}$ hits $z$, that is,
		\begin{equation}
			T_{\lambda,z}:=\inf\{t>0,\,\,\xi^{\lambda,z}_t=z\}=\inf\{t>0,\,\,B_{t\wedge \sigma_z^{\lambda}}^{\lambda}=z\}=\inf\{t>0,\,\,B_{t}^{\lambda}=z\}.
		\end{equation}
		Consider the function $f=\mathbb{I}_{\{\mathbb{R}\setminus\{z\}\}}$. Then,
		\begin{equation}
			\mathbb{I}_{\{T_{\lambda,z}<+\infty\}}\mathbb{E}_{\xi^{\lambda,z}_{T_{\lambda,z}}}[f(\xi^{\lambda,z}_{1})]=\mathbb{E}_{z}[f(\xi^{\lambda,z}_{1})]=P_1f(z)=0.\label{eqT_zstrongmarkov}
		\end{equation}
		Choose $D=\mathbb{I}_{\{T_{\lambda,z} \leq 1\}}$. Then, $D \in \mathcal{F}^{\xi^{\lambda,z}}_{T_{\lambda,z}}$ and $\{T_{\lambda,z}<\infty\}$ on $D$. By \eqref{eqT_zstrongmarkov}, $$\mathbb{E}[\mathbb{I}_{D} \mathbb{E}_{\xi^{\lambda,z}_{T_{\lambda,z}}} [f(\xi^{\lambda,z}_1)]]=0.$$ However,
		
		$$
		\begin{aligned}
			\mathbb{E}[\mathbb{I}_{D} f(\xi^{\lambda,z}_{T_{\lambda,z}+1})] & =\mathbb{E}[\mathbb{I}_{\{T_{\lambda,z} \leq 1\}} f(\xi^{\lambda,z}_{T_{\lambda,z}+1})\mathbb{I}_{\{T_{\lambda,z}+1<\sigma_z^{\lambda}\}}]+\mathbb{E}[\mathbb{I}_{\{T_{\lambda,z} \leq 1\}} f(\xi^{\lambda,z}_{T_{\lambda,z}+1})\mathbb{I}_{\{T_{\lambda,z}+1\geq \sigma_z^{\lambda}\}}] \\
			& =\mathbb{E}[\mathbb{I}_{\{T_{\lambda,z} \leq 1\}} f(B^{\lambda}_{T_{\lambda,z}+1})\mathbb{I}_{\{T_{\lambda,z}+1<\sigma_z^{\lambda}\}}]+\mathbb{E}[\mathbb{I}_{\{T_{\lambda,z} \leq 1\}} f(z)\mathbb{I}_{\{T_{\lambda,z}+1\geq \sigma_z^{\lambda}\}}] \\
			& =\int_{0}^{+\infty}\mathbb{E}[\mathbb{I}_{\{T_{\lambda,z} \leq 1\}} f(B^{\lambda}_{T_{\lambda,z}+1})\mathbb{I}_{\{T_{\lambda,z}+1<\sigma_z^{\lambda}\}}\vert\sigma_z^{\lambda}=r ]\mathbb{P}_{\sigma_z^{\lambda}}(\mathrm{d}r)\\
			& =\int_{0}^{+\infty}\mathbb{E}[\mathbb{I}_{\{T_{\lambda,z} \leq 1\}} f(B^{\lambda}_{T_{\lambda,z}+1})\mathbb{I}_{\{T_{\lambda,z}+1<r\}}\vert B^{\lambda}_{r}=z]\mathbb{P}_{\sigma_z^{\lambda}}(\mathrm{d}r)\\
			& =\int_{0}^{+\infty}\mathbb{E}\left[\mathbb{I}_{\{T_{\lambda,z}^{\beta^{r,z}} \leq 1\}} f\left(\beta^{r,z}_{T_{\lambda,z}^{\beta^{r,z}}+1}\right)\mathbb{I}_{\{T_{\lambda,z}^{\beta^{r,z}}+1<r\}}\right]\mathbb{P}_{\sigma_z^{\lambda}}(\mathrm{d}r),
		\end{aligned}
		$$
		where, $\beta^{r,z}$ is a Brownian bridge between $0$ and $z$ of length $r$ and $T_{\lambda,z}^{\beta^{r,z}}$ is the first time $\beta^{r,z}$ hits $z$ before $r$. Using the fact that $\beta^{r,z}$ is a strong Markov process.
		Hence,
		$$
		\begin{aligned}
			\mathbb{E}[\mathbb{I}_{D} f(\xi^{\lambda,z}_{T_{\lambda,z}+1})] & \geq \int_{3}^{+\infty}\mathbb{E}\left[\mathbb{I}_{\{T_{\lambda,z}^{\beta^{r,z}} \leq 1\}} f\left(\beta^{r,z}_{T_{\lambda,z}^{\beta^{r,z}}+1}\right)\right]\mathbb{P}_{\sigma_z^{\lambda}}(\mathrm{d}r)\\
			&= \int_{3}^{+\infty}\mathbb{E}\left[\mathbb{I}_{\{T_{\lambda,z}^{\beta^{r,z}} \leq 1\}}\mathbb{E}\left[ f\left(\beta^{r,z}_{T_{\lambda,z}^{\beta^{r,z}}+1}\right)\bigg\vert \mathcal{F}^{\beta^{r,z}}_{T_{\lambda,z}^{\beta^{r,z}}}\right]\right]\mathbb{P}_{\sigma_z^{\lambda}}(\mathrm{d}r)\\
			&= \int_{3}^{+\infty}\mathbb{E}\left[\mathbb{I}_{\{T_{\lambda,z}^{\beta^{r,z}} \leq 1\}}\mathbb{P}_z\left(\beta^{r,z}_1\neq z \right)\right]\mathbb{P}_{\sigma_z^{\lambda}}(\mathrm{d}r)\\
			&= \int_{3}^{+\infty}\mathbb{P}\left(T_{\lambda,z}^{\beta^{r,z}} \leq 1\right)\mathbb{P}_{\sigma_z^{\lambda}}(\mathrm{d}r)\\
			&= \int_{3}^{+\infty}\mathbb{P}\left(\sup\limits_{s\leq 1}\beta^{r,z}_s\geq z\right)\mathbb{P}_{\sigma_z^{\lambda}}(\mathrm{d}r).
		\end{aligned}
		$$
		It follows from \cite[Theorem 2.1]{BO} that
		\begin{equation}
			\mathbb{P}\left(\sup\limits_{s\leq 1}\beta^{r,z}_s\geq z\right)=2\,\Phi\left(-z\sqrt{\dfrac{r-1}{r}}\right).
		\end{equation}
		Thus,
		\begin{equation*}
			\mathbb{E}[\mathbb{I}_{D} f(\xi^{\lambda,z}_{T_{\lambda,z}+1})]\geq 2\,\Phi\left(-z\right)\left[\Phi\left(-\lambda\sqrt{3}+z/\sqrt{3}\right)+\exp(2\lambda\,z)\Phi\left(-\lambda\sqrt{3}-z/\sqrt{3}\right)\right]>0,
		\end{equation*}
		a contradiction with the strong Markov property. 
	\end{proof}
	\begin{remark}
		\label{remarkfeller}
		Since $\xi^{\lambda,z}$ is not a strong Markov process, it cannot be a Feller process. This is also seen directly by noting that the semigroup $(P_t)$ of $\xi^{\lambda,z}$ does not send $C_0(\mathbb{R})$ into itself: From \eqref{eqsemigroup}, we have 
		\begin{equation*}
			{\displaystyle P_t(f)(x)=\begin{cases}
					&f(z),  \text{if}\,\,x=z,\\
					&f(z)\bigg[ \Phi\bigg(\lambda\sqrt{t}- \dfrac{\vert z-x \vert}{\sqrt{t}}\bigg)-\exp(2\lambda\,\vert z-x \vert)\Phi\bigg(-\lambda\sqrt{t}- \dfrac{\vert z-x \vert}{\sqrt{t}}\bigg)\bigg]+\\
					&\exp(\lambda\alpha(z,x))\displaystyle\int_{\mathbb{R}} f(y)\exp(-\lambda\alpha(z,y))p(t,y-x,\lambda\,t)\hbox{d}y,  \text{if}\,\,x\neq z.
			\end{cases}}\label{eqsemigroup1}
		\end{equation*}
		Let $f(x)=\exp(-\lambda \vert x-z\vert)$ for $x\in \mathbb{R}$. Then, we have
		\begin{multline*}
			\displaystyle\int_{\mathbb{R}}f(y)\exp(-\lambda\alpha(z,y))p(t,y-x,\lambda\,t)\mathrm{d}y\\=\displaystyle\int_{-\infty}^{z}\exp(-\lambda (z-y))p(t,y-x,\lambda\,t)\mathrm{d}y+\displaystyle\int_{z}^{+\infty}\exp(-3\lambda (y-z))p(t,y-x,\lambda\,t)\mathrm{d}y\\
			=\exp\Big(\frac{3}{2}\lambda^2 t+\lambda(x-z)\Big)\bigg[\Phi\bigg(\dfrac{ z-x }{\sqrt{t}}-2\lambda\sqrt{t}\bigg)+\exp\Big(-4\lambda(x-z)\Big) \Phi\bigg(-\dfrac{ z-x }{\sqrt{t}}-2\lambda\sqrt{t}\bigg)\bigg].
		\end{multline*} 
		Hence, for $x\neq z$, we have
		\begin{multline*}
			P_t(f)(x)=\Phi\bigg(\lambda\sqrt{t}- \dfrac{\vert z-x \vert}{\sqrt{t}}\bigg)-\exp\Big(2\lambda\,\vert z-x \vert\Big)\Phi\bigg(-\lambda\sqrt{t}- \dfrac{\vert z-x \vert}{\sqrt{t}}\bigg)+\\
			\exp\Big(\frac{3}{2}\lambda^2 t+\lambda\Big(\vert z-x\vert -2(z-x)\Big)\Big)\bigg[ \Phi\bigg(\dfrac{ z-x }{\sqrt{t}}-2\lambda\sqrt{t}\bigg)\\+\exp\Big(-4\lambda(x-z)\Big) \Phi\bigg(-\dfrac{ z-x }{\sqrt{t}}-2\lambda\sqrt{t}\bigg)\bigg].
		\end{multline*}
		Consequently,
		\begin{equation*}
			\lim\limits_{x\rightarrow z}\,P_t(f)(x)=2\Phi(\lambda\sqrt{t})-1+2\exp\Big(\frac{3}{2}\lambda^2 t\Big)\Phi(-2\lambda\sqrt{t})\neq f(z)=1.
		\end{equation*}
		This shows that the function $x\in \mathbb{R}\longrightarrow P_t(f)(x)$ is not continuous at $z$.
	\end{remark}
Right-continuity of filtrations generated by stopped processes is classical when the stopping time belongs to the underlying filtration; see Chapter~6 of Jeulin \cite{J}. In the present setting, however, the time $\sigma_z^\lambda$ is not a stopping time for the Brownian filtration. In particular, the event $\{\sigma_z^\lambda \le t\}$ does not belong to $\mathcal{F}_t^B$, so the standard arguments do not apply directly.
	\begin{corollary}\label{corfiltrationrightcontinuity}
		The filtration $\mathbb{F}^{\xi^{\lambda,z},c}$ satisfies the usual conditions of right-continuity and completeness. 
	\end{corollary}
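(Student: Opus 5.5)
Since the statement is a corollary of Theorem~\ref{thmMarkov}, the plan is to deduce right-continuity from the simple Markov property and the explicit transition kernel \eqref{eqsemigroup}, using the classical Blumenthal-type argument. Completeness holds by construction. So it suffices to show $\mathcal{F}^{\xi^{\lambda,z},c}_{t^+}=\mathcal{F}^{\xi^{\lambda,z},c}_t$ for each $t\ge0$. By a monotone class argument it is enough to prove, for bounded $\mathcal{F}^{\xi^{\lambda,z}}_\infty$-measurable variables of the form $Y=\prod_{i=1}^n f_i(\xi^{\lambda,z}_{t_i})$ with $f_i$ bounded Borel and $t_1<\dots<t_n$, that
\begin{equation*}
\mathbb{E}[Y\mid\mathcal{F}^{\xi^{\lambda,z}}_{t^+}]=\mathbb{E}[Y\mid\mathcal{F}^{\xi^{\lambda,z}}_{t}]\quad \text{a.s.}
\end{equation*}
Factors with $t_i\le t$ come out of both conditional expectations. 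Hence we reduce to $t<t_1$. By the Markov property at times $s\in(t,t_1)$, and iterating the semigroup, we get $\mathbb{E}[Y\mid\mathcal{F}^{\xi^{\lambda,z}}_s]=g_{t_1-s}(\xi^{\lambda,z}_s)$ with $g_u:=P_u\big(f_1P_{t_2-t_1}(f_2\cdots)\big)$.

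The key step is to let $s\downarrow t$, using backward martingale convergence on the left and identifying the limit on the right. The obstacle is that $P_u$ is not Feller (Remark~\ref{remarkfeller}): $x\mapsto P_uh(x)$ is discontinuous at $z$. I would treat the two regions separately. On $\{\xi^{\lambda,z}_t\neq z\}$, which by \eqref{eqmodification} coincides a.s. with $\{t<\sigma_z^\lambda\}$, we have $\xi^{\lambda,z}_s\neq z$ for $s$ close to $t$, by continuity of paths and since a.s. there are no zeros of $B^\lambda-z$ just after $t$ on this event. On $\mathbb{R}\setminus\{z\}$ the kernel \eqref{eqsemigroup} is jointly continuous in $(u,x)$ for $u>0$: the Gaussian density part and the atom weight $\Phi(\cdot)-e^{2\lambda|z-x|}\Phi(\cdot)$ are continuous away from $x=z$. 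Dominated convergence then gives $g_{t_1-s}(\xi^{\lambda,z}_s)\to g_{t_1-t}(\xi^{\lambda,z}_t)$. On $\{\xi^{\lambda,z}_t=z\}=\{\sigma_z^\lambda\le t\}$ a.s., the process stays at $z$, and $P_u h(z)=h(z)$ for all $u$. So $g_{t_1-s}(z)=\prod_i f_i(z)$ is constant in $s$, and the limit is again $g_{t_1-t}(z)$.

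Combining the two regions, $\mathbb{E}[Y\mid\mathcal{F}^{\xi^{\lambda,z}}_{t^+}]=g_{t_1-t}(\xi^{\lambda,z}_t)$ a.s., which is $\mathcal{F}^{\xi^{\lambda,z}}_t$-measurable. Hence every $A\in\mathcal{F}^{\xi^{\lambda,z}}_{t^+}$ satisfies $\mathbb{I}_A=\mathbb{E}[\mathbb{I}_A\mid\mathcal{F}^{\xi^{\lambda,z}}_t]$ a.s., and therefore $A\in\mathcal{F}^{\xi^{\lambda,z},c}_t$. The main care is needed in the continuity of the kernel off $z$, in particular uniform integrability of $p(u,y-x,\lambda u)e^{-\lambda\alpha(z,y)}e^{\lambda\alpha(z,x)}$ as $(u,x)$ varies. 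I expect this to be routine because $\alpha\ge0$ and the factor $e^{\lambda\alpha(z,x)}$ is locally bounded.
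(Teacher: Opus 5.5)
Your argument is correct and is essentially the paper's proof: both use the Markov property with the explicit kernel \eqref{eqsemigroup}, let $s\downarrow t$ in $\mathbb{E}[\,\cdot\mid\xi^{\lambda,z}_s]$, and get around the failure of the Feller property at $z$ by splitting on $\{\sigma_z^{\lambda}\le t\}$ versus $\{t<\sigma_z^{\lambda}\}$. The only difference is that you run the Blumenthal-type argument yourself for multi-time cylinder functionals, which is why you need continuity of $x\mapsto P_uh(x)$ off $z$ for bounded Borel $h$ (supplied by the Gaussian density). The paper instead checks the $\mathbb{F}^{\xi^{\lambda,z}}_{+}$-Markov property for a single time and bounded continuous $f$, and then invokes Blumenthal--Getoor, Ch.~I, (8.12).
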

	\begin{proof}
		See Appendix \ref{Appendix_section_Proof}.
	\end{proof}
	\section{Semi-martingale Decomposition}
	\label{Sectionsemimartingale}
	In this section, we study the semimartingale decomposition of the process $\xi^{\lambda,z}$. This decomposition plays a crucial role in analyzing both the compensator of the stopping time $\sigma_z^{\lambda}$ and the local time of $\xi^{\lambda,z}$, which will be the focus of the next section. We begin by establishing the canonical decomposition of $\xi^{\lambda,z}$ in its own filtration and derive its explicit representation. This result will provide key insights into the structure of the process and its interaction with stopping times.
	\begin{proposition}
		The process $(B_{t\wedge\sigma_z^{\lambda}}, t\geq 0)$ is not a martingale stopped at $\sigma_z^{\lambda}$ with respect to $\mathbb{F}^{\xi^{\lambda,z}}$. Moreover, we have
		\begin{multline}
			\mathbb{E}[B_{t\wedge\sigma_z^{\lambda}}\vert \mathcal{F}_{s}^{\xi^{\lambda,z}}]=B_{s\wedge\sigma_z^{\lambda}}-\biggl\{2\bigg(\lambda(t-s)-\dfrac{1}{2\lambda}+z-\xi^{\lambda,z}_s\bigg)\exp(2\lambda\,(z-\xi^{\lambda,z}_s))\\
			\times\Phi\bigg(-\lambda\sqrt{t-s}- \dfrac{ z-\xi^{\lambda,z}_s}{\sqrt{t-s}}\bigg)
			+\dfrac{1}{\lambda}\Phi\bigg(\lambda\sqrt{t-s}-\dfrac{ z-\xi^{\lambda,z}_s }{\sqrt{t-s}}\bigg)\\
			-2(t-s)p(t-s,z-\xi^{\lambda,z}_s,\lambda\,(t-s))\biggr\}\mathbb{I}_{\{\xi^{\lambda,z}_s< z\}}\\
			+\biggl\{2\bigg(\lambda(t-s)-\dfrac{1}{2\lambda}+z-\xi^{\lambda,z}_s\bigg)\Phi\bigg(\lambda\sqrt{t-s}+\dfrac{z-\xi^{\lambda,z}_s }{\sqrt{t-s}}\bigg)-2\lambda\,(t-s)\\
			+\dfrac{1}{\lambda}\exp(-2\lambda\,(z-\xi^{\lambda,z}_s))\Phi\bigg(-\lambda\sqrt{t-s}+ \dfrac{ z-\xi^{\lambda,z}_s}{\sqrt{t-s}}\bigg)-2z\\
			+2(t-s)\exp(-2\lambda\, (z-\xi^{\lambda,z}_{s}))p(t-s,z-\xi^{\lambda,z}_s,\lambda\,(t-s))\biggr\}\mathbb{I}_{\{\xi^{\lambda,z}_s> z\}}.\label{eqnotmartingale}
		\end{multline}
		for every $0\leq s\leq t$.
	\end{proposition}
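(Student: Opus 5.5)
Since $B_{t\wedge\sigma_z^\lambda}=\xi^{\lambda,z}_t-\lambda(t\wedge\sigma_z^\lambda)$, I will write
\[
\mathbb{E}[B_{t\wedge\sigma_z^\lambda}\vert\mathcal{F}^{\xi^{\lambda,z}}_s]=\mathbb{E}[\xi^{\lambda,z}_t\vert\mathcal{F}^{\xi^{\lambda,z}}_s]-\lambda\,\mathbb{E}[t\wedge\sigma_z^\lambda\vert\mathcal{F}^{\xi^{\lambda,z}}_s].
\]
By the Markov property of Theorem~\ref{thmMarkov}, the first term equals $P_{t-s}\mathrm{id}(\xi^{\lambda,z}_s)$, with $P$ given by \eqref{eqsemigroup}.

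For the second term I will argue that $\sigma_z^\lambda$ shifted by $s$ on $\{s<\sigma_z^\lambda\}$ is governed by the same mechanism. The computation leading to \eqref{eqmodification}, together with the appendix proof of the Markov property, shows that conditionally on $\mathcal{F}^{\xi^{\lambda,z}}_s$ with $\xi^{\lambda,z}_s=x\neq z$, the law of $(\sigma_z^\lambda-s)^+$ is that of the last passage time at $z$ for $B^\lambda$ started at $x$. On $\{\xi_s=z\}$, which a.s.\ coincides with $\{\sigma_z^\lambda\le s\}$, the quantity is deterministic: $t\wedge\sigma_z^\lambda=\sigma_z^\lambda=s\wedge\sigma_z^\lambda$. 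Hence, on $\{\xi_s=z\}$, the conditional expectation is $B_{s\wedge\sigma_z^\lambda}$ and no correction term appears. This matches the two indicators in \eqref{eqnotmartingale}.

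On $\{\xi_s=x\neq z\}$, with $u=t-s$, I will compute the following quantities.

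\textbf{First}, $\mathbb{E}_x[\xi_u]=z\,\mathbb{P}_x(\sigma\le u)+\int_{\mathbb{R}} y\,e^{-\lambda\alpha(z,y)}e^{\lambda\alpha(z,x)}p(u,y-x,\lambda u)\,\mathrm{d}y$. I will split this at $y=z$: for $y<z$ the weight is $1$, and for $y>z$ it is $e^{-2\lambda(y-z)}$. The second piece is a Gaussian integral after completing the square, which produces the $e^{\pm2\lambda(z-x)}$ factors.

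\textbf{Second}, $\mathbb{E}_x[u\wedge\sigma]=\int_0^u\mathbb{P}_x(\sigma>r)\,\mathrm{d}r$. Here $\mathbb{P}_x(\sigma>r)$ is the complement of the $\mathbb{I}_{\{y=z\}}$ coefficient in \eqref{eqsemigroup}, namely $1-\Phi(\lambda\sqrt r-|z-x|/\sqrt r)+e^{2\lambda|z-x|}\Phi(-\lambda\sqrt r-|z-x|/\sqrt r)$.

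I will then subtract $\lambda$ times the second quantity from the first and $x-\lambda s'$ pieces from the result, using $B_{s\wedge\sigma}=x-\lambda s$ on this event, so that $\xi_s$ and $s$ are recombined into $B_{s\wedge\sigma}$. The integrals $\int_0^u\Phi(\pm\lambda\sqrt r-a/\sqrt r)\,\mathrm{d}r$ will be evaluated by integration by parts. The differentiation step uses $\frac{\mathrm d}{\mathrm dr}[\Phi(\lambda\sqrt r-a/\sqrt r)-e^{2\lambda a}\Phi(-\lambda\sqrt r-a/\sqrt r)]=\frac{a}{r}p(r,a,\lambda r)$ for $a>0$ (the first-passage density), combined with the identity $e^{2\lambda a}p(r,a,-\lambda r)=p(r,a,\lambda r)$. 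This is the source of the terms $(t-s)p(t-s,z-\xi_s,\lambda(t-s))$ and of the linear factor $\lambda(t-s)-\frac1{2\lambda}+z-\xi_s$.

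The two cases $x<z$ and $x>z$ differ by the sign in $|z-x|$ and by which side of $z$ carries the exponential damping. Consequently they are handled separately, which yields the two braces. I expect the main obstacle to be this bookkeeping: the many Gaussian integrals must be organized so that they collapse into the stated closed form.

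Finally, to show that the process is not a martingale, it suffices to see that the brace is not identically zero. For instance, for $x<z$, letting $t-s\to\infty$ the brace tends to $\frac1\lambda\neq0$. So the conditional expectation differs from $B_{s\wedge\sigma_z^\lambda}$ on the event $\{\xi_s<z\}$, which has positive probability.
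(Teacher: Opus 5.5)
Your plan follows the paper's route. You split $B_{t\wedge\sigma_z^{\lambda}}=\xi^{\lambda,z}_t-\lambda(t\wedge\sigma_z^{\lambda})$, use the Markov property with \eqref{eqsemigroup}, and evaluate Gaussian integrals. Computing $\mathbb{E}[t\wedge\sigma_z^{\lambda}\mid\mathcal{F}^{\xi^{\lambda,z}}_s]$ through $\int_0^{t-s}\mathbb{P}(s+r<\sigma_z^{\lambda}\mid\xi^{\lambda,z}_s)\,\mathrm{d}r$ instead of through the conditional density of $\sigma_z^{\lambda}$ is a cosmetic difference. Your non-martingale argument on $\{\xi^{\lambda,z}_s<z\}$ is correct.

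\textbf{The gap is the step you defer.} On $\{\xi^{\lambda,z}_s>z\}$ the computation cannot ``collapse into the stated closed form'', because that brace contains a spurious $-2z$. To see this, let $t\downarrow s$. With $a=z-\xi^{\lambda,z}_s<0$, every other term of the brace tends to $0$, so the right-hand side tends to $B_{s\wedge\sigma_z^{\lambda}}-2z$. The left-hand side tends to $B_{s\wedge\sigma_z^{\lambda}}$ in $L^1$, and $\mathbb{P}(\xi^{\lambda,z}_s>z)>0$ for $s>0$.

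What your method actually yields can be read off by differentiating your two quantities in $u=t-s$. From \eqref{eqsemigroup} one gets $\frac{\mathrm{d}}{\mathrm{d}u}\mathbb{E}_x[\xi_u]=\lambda\,\mathbb{E}_x[\mathrm{sgn}(z-\xi_u)\mathbb{I}_{\{u<\sigma\}}]$ and $\frac{\mathrm{d}}{\mathrm{d}u}\mathbb{E}_x[u\wedge\sigma]=\mathbb{P}_x(u<\sigma)$. Hence, for $x=\xi^{\lambda,z}_s\neq z$,
\[
\mathbb{E}[B_{t\wedge\sigma_z^{\lambda}}\mid\mathcal{F}^{\xi^{\lambda,z}}_s]-B_{s\wedge\sigma_z^{\lambda}}=-2\lambda\, e^{\lambda\gamma(z,x)}\int_0^{u}\Phi\Big(-\lambda\sqrt{r}-\frac{z-x}{\sqrt{r}}\Big)\,\mathrm{d}r .
\]
Integrating this gives exactly the stated brace on $\{x<z\}$. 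On $\{x>z\}$ it gives the stated brace \emph{without} the term $-2z$. For instance, as $u\to\infty$ the correct value is $-2(x-z)-1/\lambda$, which is $-2\lambda$ times the expected time spent above $z$ before $\sigma_z^{\lambda}$; the stated formula gives $-2x-1/\lambda$.

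The paper's own derivation does not deliver the display either. Its \eqref{eqEsplit} already has a sign slip on $\{\xi^{\lambda,z}_s>z\}$: the coefficient of $\Phi(-\lambda\sqrt{t-s}-(z-\xi^{\lambda,z}_s)/\sqrt{t-s})$ should be $-(\lambda(t-s)+(z-\xi^{\lambda,z}_s))$. So, as written, your last step fails; checking the target at $t=s$ would have caught it.

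\textbf{Your integration-by-parts identity is also wrong.} For $a>0$,
\[
\frac{\mathrm{d}}{\mathrm{d}r}\Big[\Phi\Big(\lambda\sqrt{r}-\frac{a}{\sqrt{r}}\Big)-e^{2\lambda a}\Phi\Big(-\lambda\sqrt{r}-\frac{a}{\sqrt{r}}\Big)\Big]=\lambda\,p(r,a,\lambda r).
\]
This is the last-passage density: the bracket is a last-passage distribution function, cf.\ $F_{\sigma_z^{\lambda}}$ in Appendix~\ref{Appendix_Foundation}. By contrast, $\frac{a}{r}p(r,a,\lambda r)$ is the derivative of the first-passage distribution function, which carries $+e^{2\lambda a}$.

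With the correct derivative,
\[
\int_0^u\mathbb{P}_x(\sigma>r)\,\mathrm{d}r=u\,\mathbb{P}_x(\sigma>u)+\lambda\int_0^u r\,p(r,\vert z-x\vert,\lambda r)\,\mathrm{d}r .
\]
This last integral (the paper's \eqref{eqintformullanum(r-s)}) is what produces the $(t-s)p(t-s,\cdot)$ terms; your version would not produce them.

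Finally, for $x>z$ the conditional law of $\sigma_z^{\lambda}-s$ is that of the last passage time from $x$ \emph{conditioned to hit $z$}. Unconditionally it has an atom at $0$ of mass $1-e^{-2\lambda(x-z)}$. You do use the right law through \eqref{eqsemigroup}, but your justification should say so.
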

	\begin{proof}
		Let $0\leq s \leq t$, using the Markov property of $\xi^{\lambda,z}$, the fact that $\mathbb{I}_{\{t< \sigma_z^{\lambda} \}}$ is $\sigma(\xi_t^{\lambda,z})\vee \mathcal{N}_{\mathbb{P}}$-measurable, $\sigma_z^{\lambda}\,\mathbb{I}_{\{ s<\sigma_z^{\lambda}\leq t \}}$ is $\sigma(\xi^{\lambda,z}_u, u\geq s)\vee\mathcal{N}_{\mathbb{P}}$-measurable, and that $\sigma_z^{\lambda}\,\mathbb{I}_{\{ \sigma_z^{\lambda}\leq s \}}$ is $\mathcal{F}_{s}^{\xi^{\lambda,z},c}$-measurable, we have
		\begin{multline}
			\mathbb{E}[B_{t\wedge\sigma_z^{\lambda}}\vert \mathcal{F}_{s}^{\xi^{\lambda,z}}]=\mathbb{E}[\xi_t^{\lambda,z}\vert \mathcal{F}_{s}^{\xi^{\lambda,z}}]-\lambda\,\mathbb{E}[t\wedge \sigma_z^{\lambda}\vert \mathcal{F}_{s}^{\xi^{\lambda,z}}]\\
			=\mathbb{E}[\xi_t^{\lambda,z}\vert \xi^{\lambda,z}_s]-\lambda\,t\,\mathbb{P}(t<\sigma_z^{\lambda}\vert \xi^{\lambda,z}_s)-\lambda\,\mathbb{E}[\sigma_z^{\lambda}\,\mathbb{I}_{\{ s<\sigma_z^{\lambda}\leq t \}}\vert \xi^{\lambda,z}_s]-\lambda\,\sigma_z^{\lambda}\,\mathbb{I}_{\{ \sigma_z^{\lambda}\leq s \}}.\label{eqsplit}
		\end{multline}
		From \eqref{eqsemigroup}, we have $\mathbb{P}$-a.s.,
		\begin{multline}
			\mathbb{E}[\xi_t^{\lambda,z}\vert \xi^{\lambda,z}_s]=\bigg[ z\Phi\bigg(\lambda\sqrt{t-s}- \dfrac{\vert z-\xi^{\lambda,z}_{s} \vert}{\sqrt{t-s}}\bigg)\\-z\exp(2\lambda\,\vert z-\xi^{\lambda,z}_{s} \vert)\Phi\bigg(-\lambda\sqrt{t-s}- \dfrac{\vert z-\xi^{\lambda,z}_{s} \vert}{\sqrt{t-s}}\bigg)+\exp(\lambda\,\alpha(z,\xi^{\lambda,z}_{s}))\\
			\times\displaystyle\int_{\mathbb{R}} y\exp(-\lambda\,\alpha(z,y) )p(t-s,y-\xi^{\lambda,z}_{s},\lambda(t-s))\mathrm{d}y\bigg]\,\mathbb{I}_{\{\xi^{\lambda,z}_s\neq z\}}+z\,\mathbb{I}_{\{\xi^{\lambda,z}_s=z\}}.\label{eqconditionalexpectationintegral}
		\end{multline}
		By splitting the integral in \eqref{eqconditionalexpectationintegral} over $(-\infty,z)$ and $(z,+\infty)$ we obtain that, $\mathbb{P}$-a.s.,
		\begin{multline}
			\mathbb{E}[\xi_t^{\lambda,z}\vert \xi^{\lambda,z}_s]=z+\biggl\{ -z\Phi\bigg(-\lambda\sqrt{t-s}+ \dfrac{\vert z-\xi^{\lambda,z}_{s} \vert}{\sqrt{t-s}}\bigg)-z\exp(2\lambda\,\vert z-\xi^{\lambda,z}_{s} \vert)\Phi\bigg(-\lambda\sqrt{t-s}- \dfrac{\vert z-\xi^{\lambda,z}_{s} \vert}{\sqrt{t-s}}\bigg)\\
			+(\xi^{\lambda,z}_{s}+\lambda(t-s))\exp(\lambda\alpha(z,\xi^{\lambda,z}_s))\Phi\bigg(-\lambda\sqrt{t-s}+\dfrac{ z-\xi^{\lambda,z}_{s}}{\sqrt{t-s}}\bigg)\\
			+(\xi^{\lambda,z}_{s}-\lambda(t-s))\exp(\lambda\gamma(z,\xi^{\lambda,z}_s))\Phi\bigg(-\lambda\sqrt{t-s}- \dfrac{ z-\xi^{\lambda,z}_{s}}{\sqrt{t-s}}\bigg)\biggl\}\,\mathbb{I}_{\{\xi^{\lambda,z}_s\neq z\}}.\label{eqE}
		\end{multline}
		where, 
		\begin{equation*}
			\gamma(z,x)=\vert z-x\vert + (z-x).
		\end{equation*}
		Using the identity $\mathbb{I}_{\{ \xi^{\lambda,z}_s\neq z \}}=\mathbb{I}_{\{\xi^{\lambda,z}_s< z\}}+\mathbb{I}_{\{\xi^{\lambda,z}_s> z\}}$,
		we obtain that
		\begin{multline}
			\mathbb{E}[\xi_t^{\lambda,z}\vert\mathcal{F}^{\xi^{\sigma_z^{\lambda}}}_s]=z+\biggr\{\Big(\lambda\,(t-s)-(z-\xi^{\lambda,z}_{s})\Big)\Phi\bigg(-\lambda\sqrt{t-s}+\dfrac{ z-\xi^{\lambda,z}_{s} }{\sqrt{t-s}}\bigg)\\
			-\Big(\lambda\,(t-s)+(z-\xi^{\lambda,z}_{s})\Big)\exp(2\lambda\, (z-\xi^{\lambda,z}_{s}))\Phi\bigg(-\lambda\sqrt{t-s}- \dfrac{ z-\xi^{\lambda,z}_{s}}{\sqrt{t-s}}\bigg)\biggr\}\,\mathbb{I}_{\{\xi^{\lambda,z}_s< z\}}\\
			+\biggr\{\Big(\lambda\,(t-s)-(z-\xi^{\lambda,z}_{s})\Big)\exp(-2\lambda\, (z-\xi^{\lambda,z}_{s}))\Phi\bigg(-\lambda\sqrt{t-s}+ \dfrac{ z-\xi^{\lambda,z}_{s}}{\sqrt{t-s}}\bigg)\\
			-\Big(\lambda\,(t-s)-(z-\xi^{\lambda,z}_{s})\Big)\Phi\bigg(-\lambda\sqrt{t-s}- \dfrac{ z-\xi^{\lambda,z}_{s} }{\sqrt{t-s}}\bigg)\biggr\}\,\mathbb{I}_{\{\xi^{\lambda,z}_s>z\}}.\label{eqEsplit}
		\end{multline}
		On the other hand using \eqref{eqlawofsigmigivenxit} and \eqref{eqintformullanum}, we obtain $\mathbb{P}$-a.s.,
		\begin{multline}
			\mathbb{P}(t<\sigma_z^{\lambda}\vert \xi^{\lambda,z}_s)=\lambda\,\exp(\lambda\alpha(z,\xi^{\lambda,z}_s))\displaystyle\int_t^{+\infty}p(r-s,z-\xi^{\lambda,z}_s,\lambda(r-s))\mathrm{d}r=\\
			\biggl\{\Phi\bigg(-\lambda\sqrt{t-s}+\dfrac{ \vert z-\xi^{\lambda,z}_{s}\vert}{\sqrt{t-s}}\bigg)+\exp(2\lambda\,\vert z-\xi^{\lambda,z}_{s}\vert)\Phi\bigg(-\lambda\sqrt{t-s}-\dfrac{ \vert z-\xi^{\lambda,z}_{s}\vert}{\sqrt{t-s}}\bigg)\biggr\}\mathbb{I}_{\{\xi^{\lambda,z}_s\neq z\}}.\label{eqt<sigmagivenxi_s}
		\end{multline}
		To compute the other term we use once again \eqref{eqlawofsigmigivenxit}, 
		\begin{align*}
			\mathbb{E}[\sigma_z^{\lambda}\,\mathbb{I}_{\{ s<\sigma_z^{\lambda}\leq t \}}\vert \xi^{\lambda,z}_s]&=\lambda\,\exp(\lambda\alpha(z,\xi^{\lambda,z}_s))\displaystyle\int_s^{t}r\,p(r-s,z-\xi^{\lambda,z}_s,\lambda(r-s))\hbox{d}r\mathbb{I}_{\{\xi^{\lambda,z}_s\neq z\}}\\
			&=\lambda\exp(\lambda\alpha(z,\xi^{\lambda,z}_s))\biggl\{s\,\displaystyle\int_s^{t}\,p(r-s,z-\xi^{\lambda,z}_s,\lambda(r-s))\hbox{d}r\\
			&+\displaystyle\int_s^{t}(r-s)\,p(r-s,z-\xi^{\lambda,z}_s,\lambda(r-s))\hbox{d}r\biggr\}\mathbb{I}_{\{\xi^{\lambda,z}_s\neq z\}}, 
		\end{align*}
		Thus, using \eqref{eqintformullanum} and \eqref{eqintformullanum(r-s)}, we obtain
		\begin{multline}
			\mathbb{E}[\sigma_z^{\lambda}\,\mathbb{I}_{\{ s<\sigma_z^{\lambda}\leq t \}}\vert \xi^{\lambda,z}_s]=\biggl\{s\bigg[\Phi\bigg(\lambda\sqrt{t-s}- \dfrac{\vert z-\xi^{\lambda,z}_s \vert}{\sqrt{t-s}}\bigg)-\exp(2\lambda\vert z-\xi^{\lambda,z}_s\vert)\Phi\bigg(-\lambda\sqrt{t-s}- \dfrac{\vert z-\xi^{\lambda,z}_s \vert}{\sqrt{t-s}}\bigg)\bigg]\\
			+\dfrac{1}{\lambda^2}\bigg[\Phi\bigg(\lambda\sqrt{t-s}- \dfrac{\vert z-\xi^{\lambda,z}_s \vert}{\sqrt{t-s}}\bigg)-\exp(2\lambda\vert z-\xi^{\lambda,z}_s\vert)\Phi\bigg(-\lambda\sqrt{t-s}- \dfrac{\vert z-\xi^{\lambda,z}_s \vert}{\sqrt{t-s}}\bigg)\bigg]\\
			+\dfrac{\vert z-\xi^{\lambda,z}_s \vert}{\lambda}\bigg[\Phi\bigg(\lambda\sqrt{t-s}- \dfrac{\vert z-\xi^{\lambda,z}_s \vert}{\sqrt{t-s}}\bigg)+\exp(2\lambda\vert z-\xi^{\lambda,z}_s\vert)\Phi\bigg(-\lambda\sqrt{t-s}- \dfrac{\vert z-\xi^{\lambda,z}_s \vert}{\sqrt{t-s}}\bigg)\bigg]\\
			-2\frac{t-s}{\lambda}\exp(\lambda\alpha(z,\xi^{\lambda,z}_s))p(t-s,z-\xi^{\lambda,z}_s,\lambda\,(t-s))\biggr\}\mathbb{I}_{\{\xi^{\lambda,z}_s\neq z\}}.\label{eqsigmas<sigma<tgivinxi_s}
		\end{multline}
		Inserting \eqref{eqEsplit} and \eqref{eqt<sigmagivenxi_s} into \eqref{eqsigmas<sigma<tgivinxi_s}, we obtain \eqref{eqnotmartingale}, which is the desired result.
	\end{proof}
	In the next theorem, we determine the canonical decomposition of $\xi^{\lambda,z}$ within its own filtration.
	\begin{theorem}\label{thmsemimartingale}
		The process $\xi^{\lambda,z}$ is a semi-martingale and it has the following dynamic:
		\begin{equation}
			\xi^{\lambda,z}_{t}=b^{\lambda,z}_t+\lambda\,\displaystyle\int_{0}^{t\wedge\sigma_z^{\lambda} }\text{sgn}(z-\xi^{\lambda,z}_{s})\,\mathrm{d}s,\, t\geq 0,\label{equationdecompositionsemisigma}
		\end{equation}
		where $b^{\lambda,z}$ is an $\mathbb{F}^{\xi^{\lambda,z}}$-Brownian motion stopped at $\sigma_z^{\lambda}$. Here, the sgn function is defined as
		$$
		\text{sgn}(x):=\left\{\begin{aligned}
			-1, & \text { for } x<0 \\
			0, & \text { for } x=0 \\
			+1, & \text { for } x>0
		\end{aligned}\right..
		$$
	\end{theorem}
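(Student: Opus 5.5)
The plan is to \emph{define} $b^{\lambda,z}$ through \eqref{equationdecompositionsemisigma}, namely
\[
b^{\lambda,z}_t:=\xi^{\lambda,z}_t-\lambda\int_0^{t\wedge\sigma_z^{\lambda}}\mathrm{sgn}(z-\xi^{\lambda,z}_s)\,\mathrm{d}s ,
\]
and then to show three things: $b^{\lambda,z}$ is an $\mathbb{F}^{\xi^{\lambda,z},c}$-martingale, it is continuous, and its quadratic variation is $t\wedge\sigma_z^{\lambda}$. Lévy's characterization, applied to the stopped process (for instance after concatenating an independent Brownian motion after $\sigma_z^{\lambda}$, or using the stopped version of Lévy's theorem), then shows that $b^{\lambda,z}$ is a Brownian motion stopped at $\sigma_z^{\lambda}$. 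Corollary \ref{corfiltrationrightcontinuity} lets us work with the completed filtration, which satisfies the usual conditions.

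Continuity and the bracket come directly from the pathwise identity $\xi^{\lambda,z}_t=B_{t\wedge\sigma_z^{\lambda}}+\lambda(t\wedge\sigma_z^{\lambda})$. This gives
\[
b^{\lambda,z}_t=B_{t\wedge\sigma_z^{\lambda}}+\lambda\int_0^{t\wedge\sigma_z^{\lambda}}\bigl(1-\mathrm{sgn}(z-\xi^{\lambda,z}_s)\bigr)\mathrm{d}s .
\]
This is the sum of a stopped Brownian motion and a continuous finite-variation process, so its quadratic variation, computed pathwise as a limit of sums of squared increments, equals $t\wedge\sigma_z^{\lambda}$. It does not depend on the filtration. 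The set $\{s:\xi_s=z,\ s<\sigma_z^{\lambda}\}$ has Lebesgue measure zero, so the value $\mathrm{sgn}(0)=0$ is irrelevant.

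The key step is the martingale property. By the Markov property of Theorem \ref{thmMarkov}, it suffices to prove that for $0\le s\le t$,
\[
\mathbb{E}[\xi^{\lambda,z}_t\mid\xi^{\lambda,z}_s]-\xi^{\lambda,z}_s=\lambda\,\mathbb{E}\Big[\int_s^{t}\mathbb{I}_{\{u<\sigma_z^{\lambda}\}}\mathrm{sgn}(z-\xi^{\lambda,z}_u)\,\mathrm{d}u\,\Big|\,\xi^{\lambda,z}_s\Big].
\]
The integral term is additive in time and depends only on the future path, so the Markov property reduces everything to conditioning on $\xi^{\lambda,z}_s$. Time homogeneity further reduces this to $s=0$ with a starting point $x\neq z$; if $x=z$, both sides vanish. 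Then I would verify
\[
P_hf_1(x)-x=\lambda\int_0^h P_u g(x)\,\mathrm{d}u,\qquad f_1(y)=y,\quad g(y)=\mathrm{sgn}(z-y)\,\mathbb{I}_{\{y\neq z\}},
\]
using the explicit kernel \eqref{eqsemigroup}. Equivalently, I would differentiate $P_hf_1(x)$ in $h$, which is already computed in closed form in \eqref{eqEsplit}, and check that the derivative equals $\lambda P_hg(x)$. Here $P_hg(x)$ is the integral of $\mathrm{sgn}(z-y)\,p(h,y-x,\lambda h)\,e^{\lambda(\alpha(z,x)-\alpha(z,y))}$, which is again a combination of $\Phi$ terms. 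At $h=0$ both sides vanish.

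The obstacle is this computation. One has to differentiate the $\Phi$ and $p$ expressions in \eqref{eqEsplit} separately on $\{x<z\}$ and $\{x>z\}$, using $\partial_h\Phi(\pm\lambda\sqrt h\mp a/\sqrt h)$ and the identity $e^{2\lambda a}p(h,a+\lambda h)=p(h,a-\lambda h)$ to cancel the density terms. Then one matches the result with $\lambda P_hg$, split over $y<z$ and $y>z$. As an alternative check, I would use the heuristic that before $\sigma_z^{\lambda}$ the process is the $h$-transform of $B^{\lambda}$ by $h(y)=e^{-\lambda\alpha(z,y)}$. The resulting drift is $\lambda+h'/h$, which equals $\lambda\,\mathrm{sgn}(z-y)$: it is $\lambda$ below $z$ and $-\lambda$ above. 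This supports the formula and guides the bookkeeping of the integrals.
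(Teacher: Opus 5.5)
Your proposal is correct and follows the paper's proof. Like the paper, you define $b^{\lambda,z}$ by \eqref{equationdecompositionsemisigma}. You then use the Markov property to reduce the martingale property to matching $\mathbb{E}[\xi^{\lambda,z}_t-\xi^{\lambda,z}_s\mid\xi^{\lambda,z}_s]$ with $\lambda\int_s^t\mathbb{E}[\mathrm{sgn}(z-\xi^{\lambda,z}_u)\mathbb{I}_{\{u<\sigma_z^{\lambda}\}}\mid\xi^{\lambda,z}_s]\,\mathrm{d}u$ through the kernel \eqref{eqsemigroup}. Finally, you read off $\langle b^{\lambda,z}\rangle_t=t\wedge\sigma_z^{\lambda}$ from $\xi^{\lambda,z}_t=B_{t\wedge\sigma_z^{\lambda}}+\lambda(t\wedge\sigma_z^{\lambda})$.

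The only difference is computational. You differentiate the closed form \eqref{eqEsplit} in $h$; for $x<z$ and $a=z-x$, the density terms do cancel and you get exactly $\lambda[\Phi(a/\sqrt{h}-\lambda\sqrt{h})-e^{2\lambda a}\Phi(-a/\sqrt{h}-\lambda\sqrt{h})]=\lambda P_hg(x)$. The paper instead integrates the $\Phi$-terms in $u$ using the antiderivatives \eqref{eqappendixint1}--\eqref{eqappendixint2}. Your verification is lighter, but the argument is the same.
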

	\begin{proof}
		To establish that $b^{\lambda,z}$ is an $\mathbb{F}^{\xi^{\lambda,z}}$-Brownian motion stopped at $\sigma_z^{\lambda}$, we aim to show that it is a martingale and that its quadratic variation is equal to $t\wedge\sigma_z^{\lambda}$. Straightforward estimates allow to check that the integral at the right-hand side of \eqref{equationdecompositionsemisigma} is well defined. The first step of proving Theorem \ref{thmsemimartingale} is to show that
		$$
		\mathbb{E}\left[b^{\lambda,z}_t-b^{\lambda,z}_s \vert\mathcal{F}^{\xi^{\lambda,z}}_s\right]=0, \text { for all } 0 \leq s<t
		$$
		It follows, from \eqref{equationdecompositionsemisigma} and the fact that $\xi^{\lambda,z}$ is Markovian, that
		\begin{align}
			\mathbb{E}[b^{\lambda,z}_t-b^{\lambda,z}_s\vert\mathcal{F}^{\xi^{\lambda,z}}_s]&=\mathbb{E}[\xi^{\lambda,z}_{t}-\xi^{\lambda,z}_{s}\vert\mathcal{F}^{\xi^{\lambda,z}}_s]-\lambda\displaystyle\int_{s}^{t}\mathbb{E}\Big[\text{sgn}(z-\xi^{\lambda,z}_{u})\mathbb{I}_{\{u<\sigma_z^{\lambda}\}}\Big\vert\mathcal{F}^{\xi^{\lambda,z}}_s\Big]\,\mathrm{d}u\nonumber\\
			&=\mathbb{E}[\xi^{\lambda,z}_{t}-\xi^{\lambda,z}_{s}\vert\xi^{\lambda,z}_s]-\lambda\displaystyle\int_{s}^{t}\mathbb{E}\Big[\text{sgn}(z-\xi^{\lambda,z}_{u})\mathbb{I}_{\{u<\sigma_z^{\lambda}\}}\Big\vert\xi^{\lambda,z}_s\Big]\,\mathrm{d}u.\label{eqexpressionneeded}
		\end{align}
		On one hand, using \eqref{eqEsplit} we have
		\begin{multline}
			\mathbb{E}[\xi_t^{\lambda,z}-\xi^{\lambda,z}_s\vert \xi^{\lambda,z}_s]=\biggl\{(\xi^{\lambda,z}_{s}-z+\lambda(t-s))\Phi\bigg(-\lambda\sqrt{t-s}+\dfrac{ z-\xi^{\lambda,z}_{s}}{\sqrt{t-s}}\bigg)+\\
			(\xi^{\lambda,z}_{s}-z-\lambda(t-s))\exp(2\lambda( z-\xi^{\lambda,z}_{s}) )\Phi\bigg(-\lambda\sqrt{t-s}-\dfrac{ z-\xi^{\lambda,z}_{s}}{\sqrt{t-s}}\bigg)+z-\xi^{\lambda,z}_{s}\biggl\}\,\mathbb{I}_{\{\xi^{\lambda,z}_s<z\}}\\
			+\biggl\{(\xi^{\lambda,z}_{s}-z+\lambda(t-s))\exp(-2\lambda( z-\xi^{\lambda,z}_{s}) )\Phi\bigg(-\lambda\sqrt{t-s}+\dfrac{ z-\xi^{\lambda,z}_{s}}{\sqrt{t-s}}\bigg)\\
			+(\xi^{\lambda,z}_{s}-z-\lambda(t-s))\Phi\bigg(-\lambda\sqrt{t-s}-\dfrac{z-\xi^{\lambda,z}_{s}}{\sqrt{t-s}}\bigg)+z-\xi^{\lambda,z}_{s}\biggl\}\,\mathbb{I}_{\{\xi^{\lambda,z}_s>z\}}.\label{eqfirsttermexpression}
		\end{multline}
		On the other hand, we have
		\begin{multline}
			\mathbb{E}\Big[\text{sgn}(z-\xi^{\lambda,z}_{u})\mathbb{I}_{\{u<\sigma_z^{\lambda}\}}\Big\vert\xi^{\lambda,z}_s\Big]=\exp(\lambda\,\alpha(z,\xi^{\lambda,z}_{s}))\biggl\{ \displaystyle\int_{-\infty}^{z} p(u-s,y-\xi^{\lambda,z}_{s},\lambda(u-s))\mathrm{d}y\\
			-  \displaystyle\int_{z}^{+\infty} \exp(-2\lambda\,(y-z) )p(u-s,y-\xi^{\lambda,z}_{s},\lambda(u-s))\mathrm{d}y\biggl\}\,\mathbb{I}_{\{\xi^{\lambda,z}_s\neq z\}}\\
			=\biggl\{ \exp(\lambda\,\alpha(z,\xi^{\lambda,z}_{s}))-  \exp(\lambda\,\gamma(z,\xi^{\lambda,z}_{s}))
			+\exp(\lambda\,\gamma(z,\xi^{\lambda,z}_{s}))\Phi\bigg(\lambda\sqrt{u-s}+ \dfrac{ z-\xi^{\lambda,z}_{s} }{\sqrt{u-s}}\bigg)\\-\exp(\lambda\,\alpha(z,\xi^{\lambda,z}_{s}))\Phi\bigg(\lambda\sqrt{u-s}- \dfrac{ z-\xi^{\lambda,z}_{s} }{\sqrt{u-s}}\bigg)\biggl\}\,\mathbb{I}_{\{\xi^{\lambda,z}_s\neq z\}}.
		\end{multline}
		Thus, by integrating the previous expression from $s$ to $t$, we obtain
		\begin{multline}
			\displaystyle\int_{s}^{t}\mathbb{E}\Big[\text{sgn}(z-\xi^{\lambda,z}_{u})\mathbb{I}_{\{u<\sigma_z^{\lambda}\}}\Big\vert\xi^{\lambda,z}_s\Big]\,\mathrm{d}u=\biggl\{(t-s)\Big(\exp(\lambda\,\alpha(z,\xi^{\lambda,z}_{s}))-  \exp(\lambda\,\gamma(z,\xi^{\lambda,z}_{s}))\Big)\\+\exp(\lambda\,\gamma(z,\xi^{\lambda,z}_{s})) \displaystyle\int_{0}^{t-s}\Phi\bigg(\lambda\sqrt{u}+ \dfrac{ z-\xi^{\lambda,z}_{s} }{\sqrt{u}}\bigg)\mathrm{d}u\\
			-\exp(\lambda\,\alpha(z,\xi^{\lambda,z}_{s}))\displaystyle\int_{0}^{t-s}\Phi\bigg(\lambda\sqrt{u}-\dfrac{ z-\xi^{\lambda,z}_{s} }{\sqrt{u}}\bigg)\mathrm{d}u\biggl\}\,\mathbb{I}_{\{\xi^{\lambda,z}_s\neq z\}}.\label{eqsecondterm}
		\end{multline}
		It follows from \eqref{eqappendixint1} and \eqref{eqappendixint2} that
		\begin{multline}
			\displaystyle\int_{0}^{t-s}\Phi\bigg(\lambda\sqrt{u}+ \dfrac{ z-\xi^{\lambda,z}_{s} }{\sqrt{u}}\bigg)\mathrm{d}u=\frac{1}{2 \lambda^2} \exp(-\lambda\gamma(z,\xi^{\lambda,z}_s))
			\left(-\lambda\alpha(z,\xi^{\lambda,z}_s)-1\right) \Phi\left(\lambda \sqrt{t-s}-\frac{\vert z-\xi^{\lambda,z}_s\vert}{\sqrt{t-s}}\right)\\
			+\frac{1}{2 \lambda^2} \exp(\lambda\alpha(z,\xi^{\lambda,z}_s))\left(\lambda\gamma(z,\xi^{\lambda,z}_s)-1\right)
			\left( \Phi\left(\lambda \sqrt{t-s}+\frac{\vert z-\xi^{\lambda,z}_s\vert}{\sqrt{t-s}}\right)-1\right) \\
			+(t-s) \Phi\left(\lambda \sqrt{t-s}+\frac{ z-\xi^{\lambda,z}_s}{\sqrt{t-s}}\right)
			+\dfrac{\sqrt{t-s}}{\lambda}\dfrac{1}{\sqrt{2\pi}}\exp\bigg(-\frac{1}{2}\Big(\lambda \sqrt{t-s}+\dfrac{z-\xi^{\lambda,z}_s}{\sqrt{t-s}}\Big)^2\bigg),\label{eqint1}
		\end{multline}
		and that
		\begin{multline}
			\displaystyle\int_{0}^{t-s}\Phi\bigg(\lambda\sqrt{u}- \dfrac{ z-\xi^{\lambda,z}_{s} }{\sqrt{u}}\bigg)\mathrm{d}u=\frac{1}{2 \lambda^2} \exp(-\lambda\alpha(z,\xi^{\lambda,z}_s))
			\left(-\lambda\gamma(z,\xi^{\lambda,z}_s)-1\right) \Phi\left(\lambda \sqrt{t-s}-\frac{\vert z-\xi^{\lambda,z}_s\vert}{\sqrt{t-s}}\right)\\
			+\frac{1}{2 \lambda^2} \exp(\lambda\gamma(z,\xi^{\lambda,z}_s))\left(\lambda\alpha(z,\xi^{\lambda,z}_s)-1\right)
			\left( \Phi\left(\lambda \sqrt{t-s}+\frac{\vert z-\xi^{\lambda,z}_s\vert}{\sqrt{t-s}}\right)-1\right)+ \\
			(t-s) \Phi\left(\lambda \sqrt{t-s}-\frac{ z-\xi^{\lambda,z}_s}{\sqrt{t-s}}\right)
			+\dfrac{\sqrt{t-s}}{\lambda}\dfrac{1}{\sqrt{2\pi}}\exp\bigg(-\frac{1}{2}\Big(\lambda \sqrt{t-s}-\dfrac{z-\xi^{\lambda,z}_s}{\sqrt{t-s}}\Big)^2\bigg),\label{eqint2}
		\end{multline}
		Hence, we have
		\begin{multline}
			\exp(\lambda\,\gamma(z,\xi^{\lambda,z}_{s})) \displaystyle\int_{0}^{t-s}\Phi\bigg(\lambda\sqrt{u}+ \dfrac{ z-\xi^{\lambda,z}_{s} }{\sqrt{u}}\bigg)\mathrm{d}u
			-\exp(\lambda\,\alpha(z,\xi^{\lambda,z}_{s}))\displaystyle\int_{0}^{t-s}\Phi\bigg(\lambda\sqrt{u}-\dfrac{ z-\xi^{\lambda,z}_{s} }{\sqrt{u}}\bigg)\mathrm{d}u=\\
			-\Big(\frac{1}{\lambda}(z-\xi^{\lambda,z}_s)-(t-s)\exp(\lambda\,\alpha(z,\xi^{\lambda,z}_{s}))\Big)\Phi\left(-\lambda \sqrt{t-s}+\frac{\vert z-\xi^{\lambda,z}_s\vert}{\sqrt{t-s}}\right)\\
			-\Big(\frac{1}{\lambda}(z-\xi^{\lambda,z}_s)\exp(2\lambda\vert z-\xi^{\lambda,z}_s\vert)+(t-s)\exp(\lambda\,\gamma(z,\xi^{\lambda,z}_{s}))\Big)\Phi\left(-\lambda \sqrt{t-s}-\frac{\vert z-\xi^{\lambda,z}_s\vert}{\sqrt{t-s}}\right)\\
			+\frac{1}{\lambda}(z-\xi^{\lambda,z}_s)+(t-s)\exp(\lambda\,\gamma(z,\xi^{\lambda,z}_{s}))-(t-s)\exp(\lambda\,\alpha(z,\xi^{\lambda,z}_{s})). \label{eqint1int2}
		\end{multline}
		Substituting \eqref{eqint1int2} in \eqref{eqsecondterm} and multiplying by $\lambda$, we obtain
		\begin{multline}
			\lambda\displaystyle\int_{s}^{t}\mathbb{E}\Big[\text{sgn}(z-\xi^{\lambda,z}_{u})\mathbb{I}_{\{u<\sigma_z^{\lambda}\}}\Big\vert\xi^{\lambda,z}_s\Big]\,\mathrm{d}u=\biggl\{(\xi^{\lambda,z}_{s}-z+\lambda(t-s))\Phi\bigg(-\lambda\sqrt{t-s}+\dfrac{ z-\xi^{\lambda,z}_{s}}{\sqrt{t-s}}\bigg)+\\
			(\xi^{\lambda,z}_{s}-z-\lambda(t-s))\exp(2\lambda( z-\xi^{\lambda,z}_{s}) )\Phi\bigg(-\lambda\sqrt{t-s}-\dfrac{ z-\xi^{\lambda,z}_{s}}{\sqrt{t-s}}\bigg)+z-\xi^{\lambda,z}_{s}\biggl\}\,\mathbb{I}_{\{\xi^{\lambda,z}_s<z\}}\\
			+\biggl\{(\xi^{\lambda,z}_{s}-z+\lambda(t-s))\exp(-2\lambda( z-\xi^{\lambda,z}_{s}) )\Phi\bigg(-\lambda\sqrt{t-s}+\dfrac{ z-\xi^{\lambda,z}_{s}}{\sqrt{t-s}}\bigg)\\
			+(\xi^{\lambda,z}_{s}-z-\lambda(t-s))\Phi\bigg(-\lambda\sqrt{t-s}-\dfrac{z-\xi^{\lambda,z}_{s}}{\sqrt{t-s}}\bigg)+z-\xi^{\lambda,z}_{s}\biggl\}\,\mathbb{I}_{\{\xi^{\lambda,z}_s>z\}}.\label{eqsecondtermexpression}
		\end{multline}
		Finally, inserting \eqref{eqfirsttermexpression} and \eqref{eqsecondtermexpression} into \eqref{eqexpressionneeded}, we obtain, for all $s\leq t$,
		\begin{equation}
			\mathbb{E}[b^{\lambda,z}_t-b^{\lambda,z}_s\vert\mathcal{F}^{\xi^{\lambda,z}}_s]=0.
		\end{equation}
		As the integrability of $b^{\lambda,z}$ follows from the integrability of $\xi^{\lambda,z}$, this proves that $b^{\lambda,z}$ is a martingale with respect to $\mathbb{F}^{\xi^{\lambda,z}}$. The last step of proving Theorem \ref{thmsemimartingale} is to show that $b^{\lambda,z}$ is of finite quadratic variation such that $\left\langle b^{\lambda,z}\right\rangle_t=t\wedge\sigma_z^{\lambda}$. It follows from the first step of the proof and \eqref{equationdecompositionsemisigma} that $\left\langle b^{\lambda,z}\right\rangle_t=\left\langle\xi^{\lambda,z}\right\rangle_t$. Hence, it suffices to show that $\left\langle\xi^{\lambda,z}\right\rangle_t=t\wedge\sigma_z^{\lambda}$.
		This is readily proven by the simple equality:
		\begin{equation*}
			\xi_t^{\lambda,z}=B_{t\wedge \sigma_z^{\lambda}}+\lambda\,(t\wedge \sigma_z^{\lambda}),\quad t\geq 0.
		\end{equation*}
		This completes the proof.
	\end{proof}
	\begin{remark}\label{remarksemimartingale}
		Setting $Y^{\lambda}=\xi^{\lambda,z}-z$, for $t<\sigma_z^{\lambda}$, we have
		\begin{equation}
			\mathrm{d}Y^{\lambda}_t=\mathrm{d}b^{\lambda,z}_t-\lambda\,\text{sgn}(Y^{\lambda}_t)\,\mathrm{d}t,
		\end{equation}
		this process is often referred to as bang-bang Brownian motion or Brownian motion with alternating drift, it plays a key role in the extension of P. L\'evy's distributional properties to the case of a Brownian motion with drift. In fact, the difference between the current value and its running maximum has been shown to be equal in law to the absolute value of bang-bang Brownian motion see \cite{GShiryaev}.
	\end{remark}		
	\section{Local Time, Compensator, and Nature of the Last passage Time}
	\label{Sectioncompensator}
	In this section, we analyze the nature of the stopping time $\sigma_z^{\lambda}$ by examining its compensator. Specifically, we aim to determine whether $\sigma_z^{\lambda}$ is predictable, accessible, or totally inaccessible based on the regularity of its compensator. Since the explicit expression of the compensator depends on the local time of $\xi^{\lambda,z}$, we begin by studying the properties of the local time of $\xi^{\lambda,z}$ before deriving and analyzing the compensator of $\sigma_z^{\lambda}$.
	\begin{remark}
		Since $\xi^{\lambda,z}$ is a semi-martingale, its local time $L^{\lambda,z}(t,x)$ at level $x$ up to time $t$ is well defined (see Chapter 5 in \cite{RY}). According to \cite[Theorem 1.7, Ch. IV]{RY}, there exists a modification of the process $(L^{\lambda,z}(t,x),\, t \geq 0,\, x \in \mathbb{R})$ such that the map $(t,x) \in \mathbb{R}_{+} \times \mathbb{R} \mapsto L^{\lambda,z}(t,x)$ is continuous in $t$ and c\`ad-l\`ag in $x$. Moreover, the jump size of $L^{\lambda,z}$ in the spatial variable $x$ is given by
		\begin{equation*}
			L^{\lambda,z}(t,x) - L^{\lambda,z}(t,x-) = 2 \lambda \int_{0}^{t \wedge \sigma_z^{\lambda}} \mathbb{I}_{\{ \xi^{\lambda,z}_s = x \}} \, \text{sgn}(z - \xi^{\lambda,z}_s) \, \mathrm{d}s.
		\end{equation*}
		Applying the occupation times formula to the right-hand side, we obtain
		\begin{equation}
			L^{\lambda,z}(t,x) - L^{\lambda,z}(t,x-) = 2 \lambda \int_{-\infty}^{+\infty} \mathbb{I}_{\{ y = x \}} \, \text{sgn}(z - y) \, L^{\lambda,z}(t,y) \, \mathrm{d}y = 0.
		\end{equation}
		Hence, there exists a version of $L^{\lambda,z}(t,x)$ such that the map $(t,x) \in \mathbb{R}_{+} \times \mathbb{R} \mapsto L^{\lambda,z}(t,x)$ is continuous, $\mathbb{P}$-a.s.
	\end{remark}
	We now analyze the distribution of the local time 
	$L^{\lambda,z}(\sigma_z^{\lambda},z)$ of the process 
	$\xi^{\lambda,z}$ at level $z$ up to the stopping time 
	$\sigma_z^{\lambda}$. The following proposition establishes its explicit law via a direct computation. The distribution of the local time at the last-passage level can also be derived from general results relating dual projections and local times; see, for instance, \cite{AJ}.
	\begin{proposition}\label{propexponentialdistribution}
		The amount of time the process $\xi^{\lambda,z}$ spends at the level $z$ up to $\sigma_z^{\lambda}$, $L^{\lambda,z}(\sigma_z^{\lambda},z)$, follows an exponential distribution with parameter $\lambda$.
	\end{proposition}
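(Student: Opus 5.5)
We must show that $L^{\lambda,z}(\sigma_z^{\lambda},z)$ is exponential with parameter $\lambda$. Since $\xi^{\lambda,z}_t=B^{\lambda}_{t\wedge\sigma_z^{\lambda}}$ and local time of a stopped semimartingale is the stopped local time, we have $L^{\lambda,z}(\sigma_z^{\lambda},z)=L^z_{\sigma_z^\lambda}(B^\lambda)=L^z_\infty(B^\lambda)$, the total local time of $B^\lambda$ at $z$. This holds because $B^\lambda$ never returns to $z$ after $\sigma_z^\lambda$, so its local time at $z$ is constant on $[\sigma_z^\lambda,\infty)$. It therefore suffices to identify the law of $L^z_\infty(B^\lambda)$, and this can be done without the conditional (bridge) machinery.

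\textbf{Step 1: reduction to the process started at $z$.} Let $T_z=\inf\{t:B^\lambda_t=z\}$. Since $z>0$ and $\lambda>0$, we have $T_z<\infty$ a.s., and local time at $z$ vanishes before $T_z$. By the strong Markov property of $B^\lambda$ (not of $\xi^{\lambda,z}$), $L^z_\infty(B^\lambda)$ has the law of $L^0_\infty$ for the drifted Brownian motion $W_t+\lambda t$ started at $0$.

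\textbf{Step 2: Tanaka's formula.} Apply Tanaka's formula to $(X_t)^-$ with $X_t=W_t+\lambda t$:
\begin{equation*}
X_t^- = -\int_0^t \mathbb{I}_{\{X_s\le 0\}}\,\mathrm{d}W_s-\lambda\int_0^t\mathbb{I}_{\{X_s\le 0\}}\,\mathrm{d}s+\tfrac12 L^0_t .
\end{equation*}
A cleaner route is the identity for $|X|$, or, more efficiently, the standard excursion/exponential argument. Let $\tau_\ell=\inf\{t: L^0_t>\ell\}$ be the inverse local time. Then $\{L^0_\infty>\ell\}=\{\tau_\ell<\infty\}$, and by the strong Markov property at $\tau_\ell$ (at which $X=0$),
\[
\mathbb{P}(L^0_\infty>\ell+m)=\mathbb{P}(L^0_\infty>\ell)\,\mathbb{P}(L^0_\infty>m).
\]
Combined with $\mathbb{P}(L^0_\infty>0)=1$ (since $0$ is regular for $X$), this memorylessness shows that $L^0_\infty$ is exponential. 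It remains only to identify the parameter through the mean.

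\textbf{Step 3: the mean.} By the occupation time formula and continuity of $x\mapsto L^x_\infty$,
\[
\mathbb{E}[L^0_\infty]=\lim_{\varepsilon\to0}\frac{1}{2\varepsilon}\int_0^\infty \mathbb{P}(|X_s|<\varepsilon)\,\mathrm{d}s=\int_0^\infty p(s,0,\lambda s)\,\mathrm{d}s=\frac1\lambda,
\]
using the formula $\int_0^\infty p(r,x,\lambda r)\,\mathrm{d}r=e^{-\lambda(|x|-x)}/\lambda$ stated in Section~\ref{SectionStrongMarkov} with $x=0$. Interchanging limit and expectation is justified by Fatou's lemma together with uniform integrability, obtained from the explicit Green density being bounded near $0$. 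Hence the parameter is $\lambda$.

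\textbf{Main obstacle.} The delicate point is the local time normalization. The paper uses the semimartingale local time of \cite{RY}, for which the occupation formula reads $\int_0^t f(\xi_s)\,\mathrm{d}s=\int f(x)L(t,x)\,\mathrm{d}x$; this is consistent with the density computation in Step 3, so the mean is $1/\lambda$ rather than $2/\lambda$. One must also justify the identification in the first paragraph, namely that the local time of $\xi^{\lambda,z}$ coincides with that of $B^\lambda$ stopped at $\sigma_z^\lambda$ even though $\sigma_z^\lambda$ is not an $\mathbb{F}^B$-stopping time. This identification is pathwise: local time can be defined as an almost sure limit of occupation densities of the path, which makes no reference to the filtration, so no stopping-time property is needed.
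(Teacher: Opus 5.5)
Your proof is correct in outline but takes a genuinely different route from the paper. The paper stays inside its ``condition on $\sigma_z^{\lambda}=r$'' framework. By \eqref{eqpredictableformulla}, the path up to $r$ is a Brownian bridge from $0$ to $z$. It then inserts the known formula $\mathbb{P}(L^B(r,z)>y\mid B_r=z)=\exp(-(y^2+2yz)/(2r))$ into the density \eqref{eqsigmazlamdalaw} and evaluates a Laplace-type integral, getting $e^{-\lambda y}$ in one computation.

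You instead observe that $\sigma_z^{\lambda}$ drops out entirely, since $L^{\lambda,z}(\sigma_z^{\lambda},z)=L^z_\infty(B^\lambda)$. After that you need only the strong Markov property of $B^\lambda$ (which $\xi^{\lambda,z}$ lacks), the renewal argument at the inverse local time, and the Green function value $1/\lambda$ to fix the parameter.

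What each route buys:
\begin{itemize}
\item Yours explains why the law is exponential (memorylessness) and carries over to any transient regular diffusion at a point it hits a.s., with the mean read off from the Green function.
\item Yours also makes explicit the pathwise identification of $L^{\lambda,z}$ with the local time of $B^\lambda$. The paper uses this tacitly when it replaces $L^{\lambda,z}(\sigma_z^{\lambda},z)$ by $L^B(r,z)$ under the bridge.
\item The paper's route avoids inverse-local-time arguments, but it relies on two imported results: the bridge local-time law and \eqref{eqpredictableformulla}.
\end{itemize}

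Step 3 needs repair. Fatou only gives $\mathbb{E}[L^0_\infty]\le 1/\lambda$. A bounded Green density gives $L^1$-boundedness of $\frac{1}{2\varepsilon}\int_{-\varepsilon}^{\varepsilon}L^a_\infty\,da$, not uniform integrability, so the interchange is not justified as stated. Either of two arguments fixes this:
\begin{itemize}
\item No limit is needed. Your Step~1 argument (strong Markov at $T_a$ plus translation) shows $L^a_\infty$ has the law of $L^0_\infty$ for every $a>0$, since $a$ is hit a.s. Tonelli in the occupation formula then gives $\varepsilon\,\mathbb{E}[L^0_\infty]=\int_0^\infty\mathbb{P}(0<X_s<\varepsilon)\,ds=\varepsilon/\lambda$ exactly.
\item Alternatively, the Tanaka formula for $X^-$ that you wrote down and then abandoned gives $\mathbb{E}[L^0_t]=2\mathbb{E}[X_t^-]+2\lambda\int_0^t\mathbb{P}(X_s\le 0)\,ds$, which tends to $1/\lambda$ by monotone convergence.
\end{itemize}
Either way you also get $L^0_\infty<\infty$ a.s. Step~2 needs this to rule out the degenerate solution $\mathbb{P}(L^0_\infty>\ell)\equiv 1$ of the multiplicative equation.

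Finally, in your last paragraph the occupation formula for $\xi^{\lambda,z}$ must use $d\langle\xi^{\lambda,z}\rangle_s=\mathbb{I}_{\{s<\sigma_z^{\lambda}\}}\,ds$, not $ds$. With $ds$ it would charge the constant stretch at $z$ after $\sigma_z^{\lambda}$. The $d\langle\xi^{\lambda,z}\rangle$ version is what makes your identification with $L^z_{\sigma_z^{\lambda}}(B^\lambda)$ valid.
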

	\begin{proof}
		Recall that for $y\geq 0$, $r>0$, and $(a,x)\in \mathbb{R}^2$, we have
		\begin{equation}
			\mathbb{P}(L^B(r,x)>y\vert B_r=a)=\exp\left(-\frac{(\vert x \vert+\vert a-x \vert +y )^2-a^2}{2r}\right),
		\end{equation}
		see \cite{Bo} and \cite{P}. Hence, it follows from \eqref{eqsigmazlamdalaw}, \eqref{eqpredictableformulla}, and \eqref{eqprimitive} that
		\begin{align*}
			\mathbb{P}(L^{\lambda,z}(\sigma_z^{\lambda},z)>y)&=\displaystyle\int_0^{+\infty}\mathbb{P}(L^{\lambda,z}(\sigma_z^{\lambda},z)>y\vert \sigma_z^{\lambda}=r)\mathbb{P}_{\sigma_z^{\lambda}}(\mathrm{d}r)\\
			&=\displaystyle\int_0^{+\infty}\mathbb{P}(L^{B}(r,z)>y\vert B_r=z)\mathbb{P}_{\sigma_z^{\lambda}}(\mathrm{d}r)\\
			&=\lambda\displaystyle\int_0^{+\infty}\exp\left(-\frac{y^2+2yz}{2r}\right)\,p(r,z,\lambda\,r)\,\mathrm{d}r\\
			&=\lambda\,\exp(\lambda z)\displaystyle\int_0^{+\infty}\frac{1}{\sqrt{2\pi r}}\exp\left(-\frac{(y+z)^2}{2r}-\dfrac{\lambda^2}{2}r\right)\,\mathrm{d}r\\
			&=\lambda\,\sqrt{\frac{2}{\pi}}\exp(\lambda z)\displaystyle\int_0^{+\infty}\exp\left(-\frac{(y+z)^2}{2r^2}-\dfrac{\lambda^2}{2}r^2\right)\,\mathrm{d}r\\
			&=\exp(-\lambda\,y).
		\end{align*}
		This shows that $L^{\lambda,z}(\sigma_z^{\lambda},z)$ is exponentially distributed with parameter $\lambda$.
	\end{proof}
	\begin{corollary}
		The law of the supremum of $\xi^{\lambda,z}$ up to time $\sigma_z^{\lambda}$ is the same as the law of the supremum of $z+B^{-\lambda}$, i.e.,
		\begin{equation}
			\sup\limits_{t\leq \sigma_z^{\lambda}} \xi_t^{\lambda,z} \overset{\text{law}}{=} \sup\limits_{t\geq 0} (z+B_t^{-\lambda}).
		\end{equation}
	\end{corollary}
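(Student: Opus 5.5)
Both sides of the identity will be computed explicitly and shown to be exponential with parameter $2\lambda$ beyond level $z$.

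\emph{Right-hand side.} By the classical result on the all-time maximum of a Brownian motion with negative drift, $\sup_{t\geq 0}B^{-\lambda}_t$ is exponentially distributed with parameter $2\lambda$. Hence
\[
\mathbb{P}\Big(\sup_{t\geq0}(z+B^{-\lambda}_t)>z+y\Big)=\exp(-2\lambda y),\qquad y\geq 0.
\]

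\emph{Left-hand side.} Since $\xi^{\lambda,z}$ coincides with $B^{\lambda}$ on $[0,\sigma_z^{\lambda}]$, we have $\sup_{t\le\sigma_z^\lambda}\xi^{\lambda,z}_t=\sup_{t\le\sigma_z^\lambda}B^\lambda_t$. This supremum is at least $z$, because $\xi^{\lambda,z}_{\sigma_z^\lambda}=z$. I will follow the conditioning scheme of Proposition \ref{propexponentialdistribution}. By \eqref{eqsigmazlamdalaw} and \eqref{eqpredictableformulla}, applied to the predictable functional $\sup_{s\le t}B^\lambda_s$,
\[
\mathbb{P}\Big(\sup_{t\le\sigma_z^\lambda}B^\lambda_t>z+y\Big)=\lambda\int_0^{\infty}\mathbb{P}\Big(\sup_{s\le r}B^\lambda_s>z+y\,\Big|\,B^\lambda_r=z\Big)\,p(r,z,\lambda r)\,\mathrm{d}r .
\]
Conditioned on $B^\lambda_r=z$, the drift disappears and the path is a Brownian bridge from $0$ to $z$ of length $r$. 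The reflection principle for this bridge gives
\[
\mathbb{P}\big(\sup_{s\le r}\beta^{r,z}_s>z+y\big)=\exp\big(-2(z+y)y/r\big),
\]
which is consistent with \cite[Theorem 2.1]{BO} used earlier.

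\emph{Evaluating the integral.} Substituting, I need
\[
\lambda\int_0^\infty \exp\!\Big(-\frac{2y(z+y)}{r}\Big)\frac{1}{\sqrt{2\pi r}}\exp\!\Big(-\frac{(z-\lambda r)^2}{2r}\Big)\mathrm{d}r .
\]
Expanding the exponent gives
\[
-\frac{(z+2y)^2}{2r}+\lambda z-\frac{\lambda^2 r}{2}.
\]
Using the identity
\[
\int_0^\infty \frac{1}{\sqrt{2\pi r}}e^{-a^2/(2r)-\lambda^2 r/2}\,\mathrm{d}r=\frac{e^{-\lambda a}}{\lambda},\qquad a>0,
\]
which is the same integral as in Proposition \ref{propexponentialdistribution} and in the formula $\int_0^\infty p(r,x,\lambda r)\,\mathrm{d}r$, with $a=z+2y$, the expression becomes
\[
\lambda e^{\lambda z}\cdot\frac{e^{-\lambda(z+2y)}}{\lambda}=e^{-2\lambda y}.
\]
This matches the right-hand side for every $y\ge0$. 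For $y<0$ both probabilities equal $1$, so the two laws coincide.

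\emph{Main obstacle.} The delicate step is justifying the conditioning formula \eqref{eqpredictableformulla} for the functional $\sup_{s\le t}B^\lambda_s$, and identifying the conditioned law with a driftless Brownian bridge. Girsanov's theorem shows that the bridge law of $B^\lambda$ does not depend on $\lambda$. The map $t\mapsto\mathbb{I}_{\{\sup_{s\le t}B^\lambda_s>z+y\}}$ is continuous adapted except at a single hitting time, hence predictable, so \eqref{eqpredictableformulla} applies as it did in the proof of Theorem \ref{thmMarkov}. The rest is the routine computation above.
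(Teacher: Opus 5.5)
Your proof is correct and follows essentially the same route as the paper's: condition on $\sigma_z^{\lambda}=r$ via \eqref{eqpredictableformulla}, apply the Brownian-bridge maximum law $\exp(-2x(x-z)/r)$ with $x=z+y$, and evaluate the same integral to obtain $e^{-2\lambda(x-z)}$. Two points the paper leaves implicit are supplied in your version: you identify the right-hand side explicitly as $z$ plus an exponential variable with parameter $2\lambda$, and you check that $t\mapsto\mathbb{I}_{\{\sup_{s\le t}B^\lambda_s>z+y\}}$ is left-continuous and adapted, hence predictable, which justifies using the conditioning formula.
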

	\begin{proof}
		First, since $\xi_{\sigma_z^{\lambda}}^{\lambda,z}=z$, $\sup\limits_{t\leq \sigma_z^{\lambda}} \xi_t^{\lambda,z}\geq z$. Hence, if $x< z$, then $\mathbb{P}(\sup\limits_{t\leq \sigma_z^{\lambda}} \xi_t^{\lambda,z}\leq x)=0$. If $x\geq z$, we have,
		\begin{align*}
			\mathbb{P}(\sup\limits_{t\leq \sigma_z^{\lambda}} \xi_t^{\lambda,z}\leq x)&=\displaystyle\int_0^{+\infty} \mathbb{P}(\sup\limits_{t\leq \sigma_z^{\lambda}} \xi_t^{\lambda,z}\leq x\vert \sigma_z^{\lambda}=r )\mathbb{P}_{\sigma_z^{\lambda}}(\mathrm{d}r)\\
			&=\displaystyle\int_0^{+\infty} \mathbb{P}(\sup\limits_{t\leq r} B_t^{\lambda}\leq x\vert B_r^{\lambda}=z )\mathbb{P}_{\sigma_z^{\lambda}}(\mathrm{d}r)\\
			&=1-\lambda\displaystyle\int_0^{+\infty} \exp\left(-2\dfrac{x(x-z)}{r}\right)p(r,z,\lambda r)\mathrm{d}r\\
			&=1-\lambda\sqrt{\frac{2}{\pi}}\exp(\lambda z)\displaystyle\int_0^{+\infty} \exp\left(-\dfrac{(2x-z)^2}{2r^2}-\dfrac{\lambda^2}{2}r^2\right)\mathrm{d}r\\
			&=1-\exp(-2\lambda(x-z))
		\end{align*}
		where in the third equality we used the well-known distribution of the maximum of the Brownian bridge see for instance \cite{AW}, \cite{BO}, \cite{BP} and \cite{SW}.
		Which completes the proof.
	\end{proof}
	We now determine the $\mathbb{F}^{\xi^{\lambda,z}}$-predictable compensator  of the stopping time $\sigma_z^{\lambda}$. See for instance \cite{M} for the definition and the properties of the compensator. The following theorem provides its explicit expression, which plays a key role in analyzing the nature of $\sigma_z^{\lambda}$.
	\begin{theorem}\label{thmcompensatorsigma}
		The $\mathbb{F}^{\xi^{\lambda,z}}$-predictable compensator associated with $\sigma_z^{\lambda}$ is
		\begin{equation}
			A_t^{\lambda,z}=\lambda\,L^{\lambda,z}(t\wedge \sigma_z^{\lambda},z).
		\end{equation}
	\end{theorem}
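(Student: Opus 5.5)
Our plan is to show that $M_t := \mathbb{I}_{\{\sigma_z^{\lambda}\le t\}} - \lambda L^{\lambda,z}(t\wedge\sigma_z^{\lambda},z)$ is an $\mathbb{F}^{\xi^{\lambda,z}}$-martingale. Since $A^{\lambda,z}$ is continuous and adapted, it is predictable, and it is increasing and stopped at $\sigma_z^{\lambda}$; uniqueness of the compensator then gives the claim. By Corollary \ref{corfiltrationrightcontinuity} we may work with the completed, right-continuous filtration.

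The first step is to express the local time at $z$ through Tanaka's formula, using the decomposition of Theorem \ref{thmsemimartingale}. We apply Tanaka to $\vert \xi^{\lambda,z}_t - z\vert$ or, more conveniently, to the function $h(x)=\exp(-\lambda\alpha(z,x))$, which equals $1$ for $x\le z$ and $\exp(-2\lambda(x-z))$ for $x>z$. This $h$ is exactly the quantity $\mathbb{P}(\sigma_z^{\lambda}>t\mid\xi^{\lambda,z}_t=x)$ appearing in \eqref{eqt<sigmagivenxi_s} in the limit $t-s\to\infty$. The function $h$ is convex on each side of $z$ with a kink at $z$: $h'(z-)=0$ and $h'(z+)=-2\lambda$. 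For $x>z$, the generator of the bang-bang motion $\frac12 h''-\lambda h'$ gives $2\lambda^2h-2\lambda^2h=0$; for $x<z$, $h$ is constant. The Itô--Tanaka formula therefore yields
\[
h(\xi^{\lambda,z}_t)=1+\int_0^{t}h'(\xi^{\lambda,z}_s)\,\mathrm{d}b^{\lambda,z}_s-\lambda L^{\lambda,z}(t\wedge\sigma_z^{\lambda},z),
\]
with the usual symmetric-local-time convention (we must check which convention of $L$ is used; the half-jump $\frac12(h'(z+)-h'(z-))=-\lambda$ should give the constant $\lambda$). Next we observe that $h(\xi^{\lambda,z}_t)=\mathbb{I}_{\{t<\sigma_z^\lambda\}}$ up to a correction: on $\{\sigma_z^\lambda\le t\}$ we have $\xi=z$ and $h(z)=1$, not $0$. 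So we instead consider the process $Z_t:=\mathbb{P}(\sigma_z^{\lambda}>t\mid\mathcal{F}^{\xi^{\lambda,z}}_t)$.

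The second step is to compute $Z_t$ directly. It equals $\mathbb{I}_{\{t<\sigma_z^\lambda\}}$, since $\sigma_z^\lambda$ is a stopping time by \eqref{eqmodification}. The useful object is the Azéma-type decomposition, obtained by conditioning at time $s$: we show that for $s<t$,
\[
\mathbb{E}\big[\mathbb{I}_{\{s<\sigma_z^{\lambda}\le t\}}\mid\mathcal{F}^{\xi^{\lambda,z}}_s\big]=\lambda\,\mathbb{E}\big[L^{\lambda,z}(t\wedge\sigma_z^{\lambda},z)-L^{\lambda,z}(s\wedge\sigma_z^{\lambda},z)\mid\mathcal{F}^{\xi^{\lambda,z}}_s\big].
\]
By the Markov property (Theorem \ref{thmMarkov}) both sides are functions of $\xi^{\lambda,z}_s$. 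The left side is $1$ minus \eqref{eqt<sigmagivenxi_s}. For the right side we use the Tanaka identity above: since $\int h'\,\mathrm{d}b^{\lambda,z}$ is a true martingale ($h'$ is bounded), we get $\lambda\,\mathbb{E}[L(t)-L(s)\mid\xi_s]=h(\xi_s)-\mathbb{E}[h(\xi_t)\mid\xi_s]$. We then compute $\mathbb{E}[h(\xi_t)\mid\xi_s]$ from the transition densities \eqref{eqsemigroup}: the atom at $z$ contributes $h(z)=1$ times the atom mass, and the density part gives $e^{\lambda\alpha(z,x)}\int h(y)^2 p(t-s,y-x,\lambda(t-s))\,\mathrm{d}y$. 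We then compare with \eqref{eqt<sigmagivenxi_s}. On $\{\xi_s=z\}$, that is after $\sigma_z^\lambda$, both sides vanish.

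The main obstacle is this explicit Gaussian computation, verifying that $h(x)-\mathbb{E}_x[h(\xi_{u})]$ equals $\mathbb{P}_x(\sigma\le u)$. If the algebra is messy, an alternative is to differentiate in $u$ and check the identity at the level of densities, $\lambda\,\partial_u\mathbb{E}_x[L(u,z)]=\lambda\,e^{\lambda\alpha(z,x)}p(u,z-x,\lambda u)$. Here $\partial_u\mathbb{E}_x[L(u,z)]$ is the density of $\xi_u$ at $z$ from the continuous part, obtained via the occupation formula, and this equals $e^{\lambda\alpha(z,x)}p(u,z-x,\lambda u)$ by \eqref{eqsemigroup} since $\alpha(z,z)=0$. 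This density check is simpler and we would try it first. Once the displayed equality holds, $M$ is a martingale, and the result follows.
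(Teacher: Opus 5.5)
\textbf{The $h$-route fails.} Your Tanaka step with $h(x)=\exp(-\lambda\alpha(z,x))$ is wrong, and the identity you call the main obstacle is false, so this route cannot be completed. Above $z$ the $\mathbb{F}^{\xi^{\lambda,z}}$-drift of $\xi^{\lambda,z}$ is $-\lambda$ (Theorem \ref{thmsemimartingale}). Since $h'=-2\lambda h$ and $h''=4\lambda^2h$, we get $\frac12 h''-\lambda h'=2\lambda^2h+2\lambda^2h=4\lambda^2h\neq0$; you made a sign slip. The function $h$ is harmonic for $B^{\lambda}$ (drift $+\lambda$): it is the Az\'ema supermartingale of $\sigma_z^{\lambda}$ in $\mathbb{F}^{B}$, not a limit of $\mathbb{P}(t<\sigma_z^{\lambda}\mid\xi^{\lambda,z}_s)$, which tends to $0$ as $t-s\to\infty$. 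The correct formula in $\mathbb{F}^{\xi^{\lambda,z}}$ is
\[
h(\xi^{\lambda,z}_t)=1+\int_0^{t\wedge\sigma_z^{\lambda}}h'(\xi^{\lambda,z}_s)\,\mathrm{d}b^{\lambda,z}_s+4\lambda^2\int_0^{t\wedge\sigma_z^{\lambda}}h(\xi^{\lambda,z}_s)\,\mathbb{I}_{\{\xi^{\lambda,z}_s>z\}}\,\mathrm{d}s-\lambda L^{\lambda,z}(t\wedge\sigma_z^{\lambda},z).
\]
Your version is what one gets with $B_{\cdot\wedge\sigma_z^{\lambda}}$ in place of $b^{\lambda,z}$, but that process is not an $\mathbb{F}^{\xi^{\lambda,z}}$-martingale (Section \ref{Sectionsemimartingale}). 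Concretely, $h(x)-\mathbb{E}_x[h(\xi_u)]=\mathbb{P}_x(\sigma\le u)$ fails. As $u\to\infty$ we have $\xi_u\to z$ a.s., so for $x<z$ the left side tends to $h(x)-h(z)=0$, while the right side tends to $1$.

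\textbf{Your fallback works.} The density check is correct and does not use $h$ at all, so drop $h$ and make this the proof. For bounded $f\ge0$, the occupation formula, conditional Fubini and the Markov property with \eqref{eqsemigroup} give
\[
\mathbb{E}\Big[\int_{s\wedge\sigma_z^{\lambda}}^{t\wedge\sigma_z^{\lambda}}f(\xi^{\lambda,z}_u)\,\mathrm{d}u\,\Big|\,\mathcal{F}^{\xi^{\lambda,z}}_s\Big]=\mathbb{I}_{\{s<\sigma_z^{\lambda}\}}\int_0^{t-s}\int_{\mathbb{R}}f(a)\,e^{\lambda\alpha(z,\xi^{\lambda,z}_s)-\lambda\alpha(z,a)}\,p(v,a-\xi^{\lambda,z}_s,\lambda v)\,\mathrm{d}a\,\mathrm{d}v .
\]
Take $f=\frac{1}{2\varepsilon}\mathbb{I}_{(z-\varepsilon,z+\varepsilon)}$ and let $\varepsilon\to0$. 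This yields
\[
\lambda\,\mathbb{E}\big[L^{\lambda,z}(t\wedge\sigma_z^{\lambda},z)-L^{\lambda,z}(s\wedge\sigma_z^{\lambda},z)\mid\mathcal{F}^{\xi^{\lambda,z}}_s\big]=\mathbb{I}_{\{s<\sigma_z^{\lambda}\}}\,\lambda e^{\lambda\alpha(z,\xi^{\lambda,z}_s)}\int_0^{t-s}p(v,z-\xi^{\lambda,z}_s,\lambda v)\,\mathrm{d}v,
\]
which equals $\mathbb{P}(s<\sigma_z^{\lambda}\le t\mid\mathcal{F}^{\xi^{\lambda,z}}_s)$ by \eqref{eqlawofsigmigivenxit}. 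You must justify the passage to the limit; state this explicitly. It needs the spatial continuity of $L^{\lambda,z}$, established in the remark before Proposition \ref{propexponentialdistribution}. It also needs uniform integrability: Tanaka gives $\sup_a\mathbb{E}[L^{\lambda,z}(t,a)^2]<\infty$, and Jensen then bounds the mollified local times in $L^2$.

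\textbf{Comparison with the paper.} The paper applies Tanaka to $|\xi^{\lambda,z}-z|$ and reduces the claim to the martingale property of $\lambda|\xi^{\lambda,z}_t-z|+\lambda^2(t\wedge\sigma_z^{\lambda})+\mathbb{I}_{\{t<\sigma_z^{\lambda}\}}$. It then checks this by closed-form Gaussian integrals, \eqref{eqlambdaEvert} and \eqref{eqEsplit3}. That route needs no regularity of $L$ in the space variable and yields explicit conditional expectations as by-products, but it is heavy algebra. Your route is much shorter and explains the constant. The conditional density of $\sigma_z^{\lambda}$ is exactly $\lambda$ times the killed transition density of $\xi^{\lambda,z}$ at $z$ (the last-exit formula). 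Hence the compensator is $\lambda$ times the occupation density at $z$.
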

	\begin{proof}
		First from Tanaka's formula and Theorem \ref{thmsemimartingale} we have
		\begin{equation}
			\vert \xi_t^{\lambda,z}-z\vert=z+\displaystyle\int_{0}^{t\wedge \sigma_z^{\lambda}}\text{sgn}(\xi^{\lambda,z}_s-z)\mathrm{d}b^{\lambda,z}_s-\lambda (t\wedge \sigma_z^{\lambda})+L^{\lambda,z}(t\wedge \sigma_z^{\lambda},z).
		\end{equation}
		To show that $A^{\lambda,z}=(\lambda\,L^{\lambda,z}(t\wedge \sigma_z^{\lambda},z), t\geq 0)$ is the compensator of $\sigma_z^{\lambda}$ we need to show that 
		\begin{equation*}
			\mathbb{I}_{\{\sigma_z^{\lambda}\leq t\}}-\lambda\,L^{\lambda,z}(t\wedge \sigma_z^{\lambda},z),\, t\geq 0,
		\end{equation*}
		is an $\mathbb{F}^{\xi^{\lambda,z}}$-martingale. Hence, it is equivalent to show that  
		\begin{equation*}
			\lambda \vert \xi^{\lambda,z}_{\textcolor{blue}{t}}-z\vert + \lambda^2 (t\wedge \sigma_z^{\lambda})+\mathbb{I}_{\{t<\sigma_z^{\lambda}\}},\, t\geq 0,
		\end{equation*}
		is an $\mathbb{F}^{\xi^{\lambda,z}}$-martingale. From \eqref{eqsemigroup}, we have
		\begin{multline*}
			\mathbb{E}\left[\vert \xi_t^{\lambda,z}-z \vert\Big\vert \mathcal{F}^{\xi^{\lambda,z}}_s\right]=\exp(\lambda\,\alpha(z,\xi^{\lambda,z}_{s}))\displaystyle\int_{\mathbb{R}} \vert y-z\vert\exp(-\lambda\,\alpha(z,y) )p(t-s,y-\xi^{\lambda,z}_{s},\lambda(t-s))\hbox{d}y\,\mathbb{I}_{\{\xi^{\lambda,z}_s\neq z\}}\\
			=\exp(\lambda\,\alpha(z,\xi^{\lambda,z}_{s}))\biggl[\displaystyle\int_{-\infty}^z (z-y)p(t-s,y-\xi^{\lambda,z}_{s},\lambda(t-s))\hbox{d}y\\
			+\displaystyle\int_{z}^{+\infty} (y-z)\exp(2\lambda\,(z-y) )p(t-s,y-\xi^{\lambda,z}_{s},\lambda(t-s))\hbox{d}y\biggr]\mathbb{I}_{\{\xi^{\lambda,z}_s\neq z\}}.
		\end{multline*}
		A simple calculation reveals that
		\begin{multline}
			\lambda\mathbb{E}\left[\vert \xi_t^{\lambda,z}-z \vert\Big\vert \mathcal{F}^{\xi^{\lambda,z}}_s\right]=\biggl\{2\lambda(t-s)\exp(\lambda\alpha(z,\xi^{\lambda,z}_s))p(t-s,z-\xi^{\lambda,z}_s,\lambda\,(t-s))\\+(\lambda(z-\xi^{\lambda,z}_{s})-\lambda^2(t-s))\exp(\lambda\alpha(z,\xi^{\lambda,z}_s))\Phi\bigg(-\lambda\sqrt{t-s}+\dfrac{ z-\xi^{\lambda,z}_{s}}{\sqrt{t-s}}\bigg)\\
			-(\lambda(z-\xi^{\lambda,z}_{s})+\lambda^2(t-s))\exp(\lambda\gamma(z,\xi^{\lambda,z}_s))\Phi\bigg(-\lambda\sqrt{t-s}- \dfrac{ z-\xi^{\lambda,z}_{s}}{\sqrt{t-s}}\bigg)\biggr\}\mathbb{I}_{\{\xi^{\lambda,z}_s\neq z\}}.\label{eqlambdaEvert}
		\end{multline}
		On the other hand, from \eqref{eqt<sigmagivenxi_s} and \eqref{eqsigmas<sigma<tgivinxi_s}, we have for all $s\leq t$,
		\begin{multline}
			\lambda^2\,\mathbb{E}[t\wedge \sigma_z^{\lambda}\vert \mathcal{F}_{s}^{\xi^{\lambda,z}}]=\biggl\{\Big(\lambda^2\,(t-s)-1-\lambda(z-\xi^{\lambda,z}_{s})\Big)\Phi\bigg(-\lambda\sqrt{t-s}+\dfrac{ z-\xi^{\lambda,z}_{s} }{\sqrt{t-s}}\bigg)\\
			+\Big(\lambda^2\,(t-s)-1+\lambda(z-\xi^{\lambda,z}_{s})\Big)\exp(2\lambda\, (z-\xi^{\lambda,z}_{s}))\Phi\bigg(-\lambda\sqrt{t-s}- \dfrac{ z-\xi^{\lambda,z}_{s}}{\sqrt{t-s}}\bigg)\\
			-2\lambda(t-s)p(t-s,z-\xi^{\lambda,z}_s,\lambda\,(t-s))+\lambda^2\,s+\lambda(z-\xi^{\lambda,z}_s)+1\biggr\}\mathbb{I}_{\{\xi^{\lambda,z}_s< z\}}\\
			+\biggl\{\Big(\lambda^2\,(t-s)-1-\lambda(z-\xi^{\lambda,z}_{s})\Big)\exp(-2\lambda\, (z-\xi^{\lambda,z}_{s}))\Phi\bigg(-\lambda\sqrt{t-s}+ \dfrac{ z-\xi^{\lambda,z}_{s}}{\sqrt{t-s}}\bigg)\\
			+\Big(\lambda^2\,(t-s)-1+\lambda(z-\xi^{\lambda,z}_{s})\Big)\Phi\bigg(-\lambda\sqrt{t-s}- \dfrac{ z-\xi^{\lambda,z}_{s} }{\sqrt{t-s}}\bigg)+1+\lambda^2\,s+\lambda(\xi^{\lambda,z}_s-z)\\
			-2\lambda(t-s)\exp(-2\lambda\, (z-\xi^{\lambda,z}_{s}))p(t-s,z-\xi^{\lambda,z}_s,\lambda\,(t-s))\biggr\}\mathbb{I}_{\{\xi^{\lambda,z}_s> z\}}
			+\lambda^2\sigma_z^{\lambda}\mathbb{I}_{\{\xi^{\lambda,z}_s= z\}},\label{eqEsplit3}
		\end{multline}
		and
		\begin{multline}
			\mathbb{P}(t<\sigma_z^{\lambda}\vert \mathcal{F}_{s}^{\xi^{\lambda,z}})=\biggl\{\Phi\bigg(-\lambda\sqrt{t-s}+\dfrac{ z-\xi^{\lambda,z}_{s}}{\sqrt{t-s}}\bigg)+\exp(2\lambda\,( z-\xi^{\lambda,z}_{s}))\\\times\Phi\bigg(-\lambda\sqrt{t-s}-\dfrac{ z-\xi^{\lambda,z}_{s}}{\sqrt{t-s}}\bigg)\biggr\}\mathbb{I}_{\{\xi^{\lambda,z}_s< z\}}
			+\biggl\{\Phi\bigg(-\lambda\sqrt{t-s}-\dfrac{ z-\xi^{\lambda,z}_{s}}{\sqrt{t-s}}\bigg)\\+\exp(-2\lambda\, (z-\xi^{\lambda,z}_{s}))\Phi\bigg(-\lambda\sqrt{t-s}+\dfrac{ z-\xi^{\lambda,z}_{s}}{\sqrt{t-s}}\bigg)\biggr\}\mathbb{I}_{\{\xi^{\lambda,z}_s> z\}}.\label{eqt<sigmagivenxi_s1}
		\end{multline}
		Thus, it follows from \eqref{eqlambdaEvert}, \eqref{eqEsplit3} and \eqref{eqt<sigmagivenxi_s1} that for all $s\leq t$,
		\begin{align*}
			&\mathbb{E}\left[\lambda \vert \xi^{\lambda,z}_t-z\vert + \lambda^2 (t\wedge \sigma_z^{\lambda})+\mathbb{I}_{\{t<\sigma_z^{\lambda}\}}\Big\vert\mathcal{F}_{s}^{\xi^{\lambda,z}}\right]\\
			&=\lambda\mathbb{E}\left[\vert \xi^{\lambda,z}_t-z\vert\Big\vert\mathcal{F}_{s}^{\xi^{\lambda,z}}\right]+\lambda^2\mathbb{E}[t\wedge \sigma_z^{\lambda}\vert\mathcal{F}_{s}^{\xi^{\lambda,z}}]+\mathbb{P}(t<\sigma_z^{\lambda}\vert \mathcal{F}_{s}^{\xi^{\lambda,z}})\\
			&=\Bigl\{\lambda(z-\xi^{\lambda,z}_s)+\lambda^2\,s+1\Bigr\}\mathbb{I}_{\{\xi^{\lambda,z}_s< z\}}+\Bigl\{\lambda(\xi^{\lambda,z}_s-z)+\lambda^2\,s+1\Bigr\}\mathbb{I}_{\{\xi^{\lambda,z}_s> z\}}+\lambda^2\sigma_z^{\lambda}\mathbb{I}_{\{\xi^{\lambda,z}_s= z\}}\\
			&=\Bigl\{\lambda\vert\xi^{\lambda,z}_s-z\vert+\lambda^2\,s+1\Bigr\}\mathbb{I}_{\{\xi^{\lambda,z}_s\neq z\}}+\Bigl\{\lambda\vert \xi^{\lambda,z}_s-z\vert+\lambda^2\sigma_z^{\lambda}\Bigr\}\mathbb{I}_{\{\xi^{\lambda,z}_s= z\}}\\
			&=\Bigl\{\lambda\vert\xi^{\lambda,z}_s-z\vert+\lambda^2\,s+1\Bigr\}\mathbb{I}_{\{s<\sigma_z^{\lambda}\}}+\Bigl\{\lambda\vert\xi^{\lambda,z}_s-z\vert+\lambda^2\sigma_z^{\lambda}\Bigr\}\mathbb{I}_{\{\sigma_z^{\lambda}\leq s\}}\\
			&=\Bigl\{\lambda\vert\xi^{\lambda,z}_s-z\vert+\lambda^2(s\wedge \sigma_z^{\lambda})+\mathbb{I}_{\{s<\sigma_z^{\lambda}\}}\Bigr\}\mathbb{I}_{\{s<\sigma_z^{\lambda}\}}+\Bigl\{\lambda\vert \xi^{\lambda,z}_s-z\vert+\lambda^2(s\wedge \sigma_z^{\lambda})+\mathbb{I}_{\{s<\sigma_z^{\lambda}\}}\Bigr\}\mathbb{I}_{\{\sigma_z^{\lambda}\leq s\}}\\
			&=\lambda\vert \xi^{\lambda,z}_s-z\vert+\lambda^2(s\wedge \sigma_z^{\lambda})+\mathbb{I}_{\{s<\sigma_z^{\lambda}\}}.
		\end{align*}
		As the integrability of $L^{\lambda,z}(t,z)$ follows from the integrability of $\xi_t^{\lambda,z}$, this proves that
		\begin{equation*}
			\mathbb{I}_{\{\sigma_z^{\lambda}\leq t\}}-\lambda\,L^{\lambda,z}(t\wedge \sigma_z^{\lambda},z),\, t\geq 0,
		\end{equation*}
		is a martingale with respect to $\mathbb{F}^{\xi^{\lambda,z}}$. Hence, the process $A^{\lambda,z}$ is the compensator of $\sigma_z^{\lambda}$.
	\end{proof}
	\begin{corollary}\label{cortotallyinaccessible}
		The random time $\sigma_z^{\lambda}$ is a totally inaccessible stopping time with respect to the filtration $\mathbb{F}^{\xi^{\lambda,z},c}$.
	\end{corollary}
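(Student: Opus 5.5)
The plan is to combine Theorem \ref{thmcompensatorsigma} with the compensator characterization of stopping times recalled in the introduction (Theorem \ref{thmappendixcompensator}): a stopping time is totally inaccessible if and only if its predictable compensator is continuous. So it suffices to show that $A^{\lambda,z}_t=\lambda L^{\lambda,z}(t\wedge\sigma_z^\lambda,z)$ has a continuous version.

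The first step is to pass from $\mathbb{F}^{\xi^{\lambda,z}}$ to $\mathbb{F}^{\xi^{\lambda,z},c}$. By Corollary \ref{corfiltrationrightcontinuity}, this completed filtration satisfies the usual conditions, and by \eqref{eqmodification}, $\sigma_z^\lambda$ is a stopping time in it. Adding null sets does not affect the martingale property of $\mathbb{I}_{\{\sigma_z^\lambda\le t\}}-A^{\lambda,z}_t$. Also, $A^{\lambda,z}$ is adapted, continuous and increasing, hence predictable for the completed filtration. So by uniqueness of the dual predictable projection, $A^{\lambda,z}$ is also the $\mathbb{F}^{\xi^{\lambda,z},c}$-compensator of $\sigma_z^\lambda$.

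The second step is continuity. The remark preceding Proposition \ref{propexponentialdistribution} shows that $(t,x)\mapsto L^{\lambda,z}(t,x)$ admits a jointly continuous version; in particular $t\mapsto L^{\lambda,z}(t,z)$ is continuous and nondecreasing. Composing with the continuous map $t\mapsto t\wedge\sigma_z^\lambda$ gives continuity of $A^{\lambda,z}$ almost surely. Moreover $A^{\lambda,z}_\infty=\lambda L^{\lambda,z}(\sigma_z^\lambda,z)$ is finite, indeed integrable (exponential with mean $1$ by Proposition \ref{propexponentialdistribution}). This is consistent with the integrability needed in the compensator characterization, and it shows the compensator is not identically zero, so $\sigma_z^\lambda<\infty$ is a genuine jump time.

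Conclusion and main obstacle. Applying Theorem \ref{thmappendixcompensator} gives that $\sigma_z^\lambda$ is totally inaccessible. Equivalently, one can argue directly: if $S$ is predictable, then $\mathbb{P}(\sigma_z^\lambda=S<\infty)=\mathbb{E}[\Delta A^{\lambda,z}_S\mathbb{I}_{\{S<\infty\}}]=0$ by continuity of $A^{\lambda,z}$. The only delicate point is the first step: the compensator must be the one relative to a filtration satisfying the usual conditions, and the local-time version must be chosen consistently, that is, adapted and indistinguishable from a continuous process. I would handle this by taking the continuous version from the occupation-density argument and noting that modifications on null sets are absorbed by completeness.
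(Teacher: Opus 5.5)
Your proposal is correct and follows the same route as the paper: by Theorem~\ref{thmcompensatorsigma} the compensator $\lambda L^{\lambda,z}(t\wedge\sigma_z^{\lambda},z)$ is continuous, and \cite[Corollary 25.18]{K} (Theorem~\ref{thmappendixcompensator}) then gives total inaccessibility. Your extra points are sound details that the paper leaves implicit: you transfer the compensator to the completed, right-continuous filtration via Corollary~\ref{corfiltrationrightcontinuity}, and you check directly that $\mathbb{P}(\sigma_z^{\lambda}=S<\infty)=\mathbb{E}[\Delta A^{\lambda,z}_S\mathbb{I}_{\{S<\infty\}}]=0$ for every predictable $S$.
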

	\begin{proof}
		Given that the compensator $A^{\lambda,z}$ of $\sigma_z^{\lambda}$ is continuous, it can be concluded that $\sigma_z^{\lambda}$ is a totally inaccessible stopping time with respect to the filtration $\mathbb{F}^{\xi^{\lambda,z},c}$. This conclusion follows from the well-established result that the continuity of the compensator is equivalent to $\sigma_z^{\lambda}$ being totally inaccessible, as detailed in \cite[Corollary 25.18]{K}.
	\end{proof}
	\begin{remark}
		We can deduce the result of Proposition \ref{propexponentialdistribution} from Corollary \ref{cortotallyinaccessible} by using the fact that if a stopping time is totally inaccessible, its compensator at infinity, $A^{\lambda,z}_{\infty}$, is exponentially distributed with rate parameter $1$. Consequently, since $A_t^{\lambda,z}=\lambda\,L^{\lambda,z}(t\wedge \sigma_z^{\lambda},z)$, it follows that the random variable $\lambda\,L^{\lambda,z}(\sigma_z^{\lambda},z)$ follows an exponential distribution  with rate parameter $1$. Thus, $L^{\lambda,z}(\sigma_z^{\lambda},z)$ follows an exponential distribution  with rate parameter $\lambda$. This implies that the expected amount of time the process $\xi^{\lambda,z}$ spends at the level $z$ up to time $\sigma_z^{\lambda}$ is $\frac{1}{\lambda}$.
	\end{remark}
	\section{Accessible, Predictable Times and Quasi-left-continuity}
	\label{Sectionaccessiblepredictabletimes}
	In this section, we study the stopping times within the filtration generated by the process $\xi^{\lambda,z}$. Firstly, we show that every accessible stopping time is predictable. This is equivalent to the fact that the filtration $\mathbb{F}^{\xi^{\lambda,z},c}$ is quasi-left-continuous, that is, $\mathcal{F}^{\xi^{\lambda,z},c}_{T-}=\mathcal{F}^{\xi^{\lambda,z},c}_{T}$, for every predictable stopping time $T$. From a modeling point of view, a quasi-left-continuous filtration ensures that the only jumps or surprises in the model come from totally inaccessible events. Secondly, we show that the last passage time $\sigma_z^{\lambda}$ is the only stopping time totally inaccessible within the filtration $\mathbb{F}^{\xi^{\lambda,z},c}$, in the sense that every stopping time $T$ such that $\mathbb{P}(T=\sigma_z^{\lambda})=0$, the stopping time $T$ is predictable. 
	\begin{theorem}\label{thmquasi-left-continuous}
		The completed filtration generated by $\xi^{\lambda,z}$, $\mathbb{F}^{\xi^{\lambda,z},c}$, is quasi-left-continuous.
	\end{theorem}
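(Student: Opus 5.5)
The plan follows the route announced in the introduction: transfer the problem to the augmented process $\zeta^{\lambda,z}_t=(\mathbb{I}_{\{t<\sigma_z^{\lambda}\}},\xi^{\lambda,z}_t)$ on the state space $E=(\{1\}\times\mathbb{R})\cup\{(0,z)\}$, show it is Feller, and then use the fact that the completed filtration of a Feller process is quasi-left-continuous (Kallenberg's result recalled in Appendix~\ref{Appendix_Kallenberg}, Theorem~\ref{thmappendixpredictable}).

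\textbf{Equality of filtrations.} The first step is to check that $\mathbb{F}^{\zeta^{\lambda,z},c}=\mathbb{F}^{\xi^{\lambda,z},c}$. One inclusion is trivial, since $\xi^{\lambda,z}$ is a coordinate of $\zeta^{\lambda,z}$. For the other, \eqref{eqmodification} gives $\mathbb{I}_{\{t<\sigma_z^{\lambda}\}}=\mathbb{I}_{\{\xi^{\lambda,z}_t\neq z\}}$ a.s. for each fixed $t$. Hence each $\zeta^{\lambda,z}_s$, $s\le t$, is $\mathcal{F}^{\xi^{\lambda,z},c}_t$-measurable.

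\textbf{Markov property and transition semigroup.} Next I would verify that $\zeta^{\lambda,z}$ is Markov with semigroup $Q_t$ read off from \eqref{eqsemigroup}:
\begin{itemize}
\item from $(0,z)$ the process stays put;
\item from $(1,x)$ it moves to $(0,z)$ with probability $\Phi(\lambda\sqrt t-|z-x|/\sqrt t)-e^{2\lambda|z-x|}\Phi(-\lambda\sqrt t-|z-x|/\sqrt t)$;
\item otherwise it lands at $(1,y)$ with density $p(t,y-x,\lambda t)e^{-\lambda\alpha(z,y)}e^{\lambda\alpha(z,x)}$.
\end{itemize}
The crucial point is that $(1,z)$ is now a genuine state distinct from the absorbing point $(0,z)$. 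So $Q_tf(1,x)$ for $x\to z$ need only converge to $Q_tf(1,z)$, and the latter is defined by the same formula evaluated at $x=z$. This removes exactly the discontinuity exhibited in Remark~\ref{remarkfeller}.

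\textbf{Feller property.} I would then verify the two Feller conditions on $C_0(E)$.
\begin{itemize}
\item \emph{Continuity in $x$.} The kernel is jointly continuous in $x$ on $\{1\}\times\mathbb{R}$ for fixed $t>0$, including across $x=z$: $\alpha(z,\cdot)$ is continuous, and the absorption probability formula depends only on $|z-x|$. Dominated convergence then gives continuity of $x\mapsto Q_tf(1,x)$. The point $(0,z)$ is isolated, so continuity there is automatic.
\item \emph{Vanishing at infinity.} As $|x|\to\infty$, the absorption probability tends to $0$ and the Gaussian kernel escapes to infinity, so $Q_tf(1,x)\to 0$.
\item \emph{Strong continuity at $t=0$.} As $t\downarrow 0$, the absorption probability tends to $0$ for $x\ne z$ but to $1/2$ at $x=z$. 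I therefore expect continuity at $t=0$ to require care at $(1,z)$. The saving fact is that the continuous part then has total mass $1/2$ concentrated near $z$, so $Q_tf(1,z)\to \tfrac12 f(0,z)+\tfrac12 f(1,z)$, which is not $f(1,z)$.
\end{itemize}

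\textbf{Main obstacle and fallback.} This last point is the main obstacle: with the topology on $E$ in which $(0,z)$ is isolated, the pointwise limit at $(1,z)$ fails. I would try first to handle it via the convention that $\zeta$ starts away from $(1,z)$ and that $\xi$ a.s. spends zero Lebesgue time at $(1,z)$ before $\sigma_z^\lambda$. If that fails, I would use the more robust route of showing directly that $\zeta^{\lambda,z}$ is strong Markov with càdlàg paths, and that its jumps occur only at $\sigma_z^{\lambda}$: the first coordinate jumps from $1$ to $0$ there. Quasi-left-continuity then follows, because any predictable $T$ announced by $T_n\uparrow T$ satisfies $\zeta_{T_n}\to\zeta_T$ a.s. by quasi-left-continuity of the strong Markov process. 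The latter is itself guaranteed since $\sigma_z^{\lambda}$ is totally inaccessible (Corollary~\ref{cortotallyinaccessible}), so $\mathbb{P}(T=\sigma_z^{\lambda})=0$ for predictable $T$. Hence $\mathcal{F}_{T-}=\mathcal{F}_T$ via the standard Blumenthal–Getoor argument, using the right-continuity from Corollary~\ref{corfiltrationrightcontinuity}.
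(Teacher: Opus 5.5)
Your route is the paper's: augment to $\zeta^{\lambda,z}$, identify the completed filtrations via \eqref{eqmodification}, prove that $\zeta^{\lambda,z}$ is Feller, and conclude with Theorem~\ref{thmappendixpredictable}. Working on $E=(\{1\}\times\mathbb{R})\cup\{(0,z)\}$ instead of $\{0,1\}\times\mathbb{R}$ is harmless. However, the ``main obstacle'' you identify does not exist; it comes from an arithmetic slip. Because of that slip, the proposal as written abandons the working argument in favour of fallbacks that do not hold up.

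At $x=z$ you have $|z-x|=0$, so the absorption probability is
\[
\Phi(\lambda\sqrt t)-\Phi(-\lambda\sqrt t)=2\Phi(\lambda\sqrt t)-1\longrightarrow 0,\qquad t\downarrow 0 .
\]
Each $\Phi$-term does tend to $1/2$, but they enter with opposite signs and cancel; this is exactly the check made in the proof of statement (i) of Theorem~\ref{thmFeller}. Consistently, the absolutely continuous part issued from $(1,z)$ has total mass
\[
\int_{-\infty}^{z}p(t,y-z,\lambda t)\,\mathrm{d}y+\int_{z}^{+\infty}e^{-2\lambda(y-z)}p(t,y-z,\lambda t)\,\mathrm{d}y=2\Phi(-\lambda\sqrt t)\longrightarrow 1 .
\]
Here completing the square gives $e^{-2\lambda u}p(t,u,\lambda t)=p(t,u,-\lambda t)$. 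This mass concentrates at $z$, so $Q_tf(1,z)\to f(1,z)$. The Feller property therefore holds on all of $E$, and your main route closes exactly as in the paper.

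The fallbacks should be dropped, and as stated neither would work.
\begin{itemize}
\item The first (starting away from $(1,z)$, zero Lebesgue occupation) cannot repair a genuine failure of the Feller property. The process $\zeta^{\lambda,z}$ sits at $(1,z)$ at optional times such as $T_{\lambda,z}$, which is a.s.\ strictly before $\sigma_z^{\lambda}$, and the strong Markov property is needed at such times.
\item The second presupposes a strong Markov property for $\zeta^{\lambda,z}$, for which you offer no argument other than the Feller one.
\end{itemize}
Moreover, under your own (incorrect) limit the second fallback would fail outright. The strong Markov property at $T_{\lambda,z}$ together with right-continuity of $\zeta^{\lambda,z}$ there forces $Q_tf(1,z)=\mathbb{E}[f(\zeta^{\lambda,z}_{T_{\lambda,z}+t})]\to f(1,z)$, contradicting that limit.
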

	It is known that the filtration generated by a Feller process is quasi-left-continuous. But, $\xi^{\lambda,z}$ is not a Feller process, see Remark \ref{remarkfeller}. Even though we can see that what ruin the Feller property is the fact that the information about $\xi^{\lambda,z}_t$ at time $t$ is not enough to determine the location of $\tau(\omega)$ with respect to $t$ whether $\tau(\omega) \leq t$ or $t < \tau(\omega)$ for a given outcome $\omega$ in the sample space $\Omega$. To overcome this problem we consider the two-dimensional process $\zeta^{\lambda,z}$ defined by
	\begin{equation}
		\zeta^{\lambda,z}_t=(\mathbb{I}_{\{t<\sigma_z^{\lambda}\}},\xi_t^{\lambda,z}),\,t\geq 0
	\end{equation}
	The process $\zeta^{\lambda,z}$ belongs to the set of values represented by 
	\begin{equation*}
		\mathcal{S}=\{0,1\}\times\mathbb{R}.
	\end{equation*}
	From \eqref{eqmodification}, we have $\mathbb{I}_{\{t<\sigma_z^{\lambda}\}}=\mathbb{I}_{\{\xi_t^{\lambda,z}\neq z\}}$, $\mathbb{P}$-a.s., hence, 
	\begin{equation}
		\mathcal{F}_t^{\xi^{\lambda,z},c}=\mathcal{F}_t^{\zeta^{\lambda,z},c}, t\geq 0.\label{eqcoincidefiltrations}
	\end{equation}
	Thus, the proof of Theorem \ref{thmquasi-left-continuous} is an immediate consequence of the following theorem:
	\begin{theorem}\label{thmFeller}
		The process $\zeta^{\lambda,z}$ is a Feller process, that is, its semigroup $Q^{\lambda,z}$ satisfies the following statements:
		\begin{enumerate}
			\item[(i)] $Q^{\lambda,z}_t h(y) \rightarrow h(y)$ as $t \rightarrow 0, h \in C_0(\{0,1\}\times \mathbb{R}), y=(y_1, y_2) \in \{0,1\}\times \mathbb{R}$
			\item[(ii)] $ Q^{\lambda,z}_t C_0(\{0,1\}\times \mathbb{R}) \subset C_0(\{0,1\}\times \mathbb{R}), t \geq 0$.
		\end{enumerate}
	\end{theorem}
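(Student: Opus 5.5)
The plan is to write the semigroup $Q^{\lambda,z}$ of $\zeta^{\lambda,z}$ explicitly from the transition densities \eqref{eqsemigroup}, and then verify (i) and (ii) by hand. Because $\{0,1\}$ is discrete, a function $h\in C_0(\{0,1\}\times\mathbb{R})$ is just a pair $h_0=h(0,\cdot)$, $h_1=h(1,\cdot)$ with $h_0,h_1\in C_0(\mathbb{R})$.

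\emph{The semigroup.} Once $\sigma_z^\lambda\le t$, the process is frozen at $(0,z)$. State $0$ is absorbing, and from the $\xi$-dynamics it is only ever occupied at height $z$. We therefore set
\[
Q^{\lambda,z}_t h(0,y_2)=h_0(y_2).
\]
This is a legitimate continuous extension of the absorbing dynamics to the whole fibre $\{0\}\times\mathbb{R}$. It is the natural choice: $(0,y_2)$ with $y_2\neq z$ is never reached, and declaring such points absorbing keeps $Q$ a semigroup.

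In state $1$ the process behaves as $B^\lambda$ before its last passage at $z$. Using \eqref{eqsemigroup} together with the law \eqref{eqt<sigmagivenxi_s} of survival, we define
\[
Q^{\lambda,z}_t h(1,x)=h_0(z)\,\Pi_t(x)+\exp(\lambda\alpha(z,x))\int_{\mathbb{R}}h_1(y)\,\exp(-\lambda\alpha(z,y))\,p(t,y-x,\lambda t)\,\mathrm{d}y ,
\]
where
\[
\Pi_t(x)=\Phi\Big(\lambda\sqrt t-\tfrac{|z-x|}{\sqrt t}\Big)-\exp(2\lambda|z-x|)\,\Phi\Big(-\lambda\sqrt t-\tfrac{|z-x|}{\sqrt t}\Big)
\]
is the probability of being killed by time $t$. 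Note that $x=z$ in state $1$ is also unattainable, since a.s.\ $\xi_t\neq z$ on $\{t<\sigma_z^\lambda\}$. We define $Q_t h(1,z)$ by the same formula, i.e.\ by continuity. This is exactly the point where the pair representation repairs Remark \ref{remarkfeller}: the value at $(1,z)$ is no longer forced to be $h(0,z)$.

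Before the Feller checks, I will confirm the Chapman--Kolmogorov property of this extended kernel. It follows from the Markov computation in Appendix~\ref{Appendix_section_Proof}, because the sub-Markov part in state $1$ is an $h$-transform of the drifted Brownian kernel by $x\mapsto \exp(-\lambda\alpha(z,x))$, equivalently by the probability of reaching $z$ in the future.

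\emph{Property (ii).} On fibre $0$ the statement is trivial. On fibre $1$, continuity in $x$ follows from dominated convergence, since the integrand is jointly continuous in $x$ and bounded by $\|h_1\|_\infty\,p(t,y-x,\lambda t)\exp(\lambda\alpha(z,x))$. The function $\Pi_t$ is continuous in $x$ because it involves only $|z-x|$ and continuous functions. The kink of $\alpha$ at $z$ is harmless, since $\alpha$ is continuous.

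For vanishing at infinity I separate the two directions. As $x\to+\infty$ we have $\alpha(z,x)=2(x-z)$, which grows, but $\Pi_t(x)\to 0$ and the integral term equals $\mathbb{E}\big[h_1(x+B_t^\lambda)\exp\big(-\lambda(\alpha(z,x+B^\lambda_t)-\alpha(z,x))\big)\big]$. For large $x$ the exponential factor is $\exp(-2\lambda B^\lambda_t)$ on the event $x+B^\lambda_t>z$, which has expectation $1$ by the martingale property of $\exp(-2\lambda B^\lambda)$. Hence the term is bounded by $\mathbb{E}[|h_1(x+B^\lambda_t)|\exp(-2\lambda B_t^\lambda)]$ plus a contribution from $\{x+B^\lambda_t\le z\}$ that is exponentially small. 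The bound tends to $0$ by dominated convergence under the measure tilted by $\exp(-2\lambda B^\lambda_t)$. As $x\to-\infty$ we have $\alpha=0$, $\Pi_t\to0$, and the integral is bounded by $\mathbb{E}|h_1(x+B^\lambda_t)|\to 0$.

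\emph{Property (i).} On fibre $0$ there is nothing to prove. On fibre $1$, as $t\to0$ we have $\Pi_t(x)\to 0$ for $x\neq z$. At $x=z$ the limit is $\Pi_t(z)=\Phi(\lambda\sqrt t)-\Phi(-\lambda\sqrt t)\to 0$ as well. The integral term tends to $h_1(x)$ because $p(t,\cdot-x,\lambda t)$ is an approximate identity and $y\mapsto h_1(y)\exp(-\lambda\alpha(z,y))$ is continuous and bounded near $x$. Pointwise convergence suffices for the definition of a Feller semigroup on $C_0$, and in fact uniform convergence follows by standard arguments.

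\emph{Main obstacle.} I expect the hardest step to be justifying that this extended kernel, defined on the unreachable points $\{0\}\times(\mathbb{R}\setminus\{z\})$ and $(1,z)$, is a genuine transition semigroup for $\zeta^{\lambda,z}$. The semigroup property at $(1,z)$ must be checked through the $h$-transform identity $\lambda\exp(\lambda\alpha(z,x))\int_0^\infty p(r,z-x,\lambda r)\,\mathrm{d}r=1$, which holds for all $x$. The same identity will also deliver the growth control used in (ii).
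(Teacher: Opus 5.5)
Your proposal is correct and follows the paper's skeleton. It uses the same kernel $Q^{\lambda,z}$: the identity on $\{0\}\times\mathbb{R}$, and on $\{1\}\times\mathbb{R}$ the killing mass $\Pi_t$ sent to $(0,z)$ plus the weighted Gaussian term, with $(1,z)$ covered by the same formula. It also has the same proof of (i) and the same continuity argument in (ii). It departs from the paper in two places.

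\emph{Decay of the integral term.} The paper shifts variables and uses only that $\alpha(z,\cdot)$ is $2$-Lipschitz (the exponent there should read $\alpha(z,y_2)-\alpha(z,y_2+\eta)$). This yields the single dominating function $\|h\|_\infty e^{2\lambda|\eta|}p(t,\eta,\lambda t)$ and treats $y_2\to\pm\infty$ at once. You split the two directions and, for $x\to+\infty$, tilt by the martingale $e^{-2\lambda B^\lambda_t}$, under which $B^\lambda_t$ has law $N(-\lambda t,t)$. Your route is longer and needs a separate estimate on $\{x+B^\lambda_t\le z\}$, but it shows the drift $-\lambda$ above $z$ that reappears in the generator. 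In that remainder, as in your claim $\Pi_t(x)\to 0$, the one nontrivial limit is $e^{2\lambda a}\Phi(-\lambda\sqrt t-a/\sqrt t)\to 0$ as $a\to\infty$, and you only assert it. The paper obtains it from the Mills-ratio bound $\Phi(-\bar x)\le e^{-\bar x^2/2}/(\sqrt{2\pi}\,\bar x)$, which gives $e^{2\lambda a}\Phi(-\lambda\sqrt t-a/\sqrt t)\le \frac{\sqrt t}{\sqrt{2\pi}\,(\lambda t+a)}\exp\bigl(-\frac12(\lambda\sqrt t-a/\sqrt t)^2\bigr)$. You should include this estimate.

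\emph{Semigroup property.} The paper links $Q^{\lambda,z}$ to $\zeta^{\lambda,z}$ through the a.s. identity $\mathbb{E}[h(\zeta^{\lambda,z}_{t+s})\mid\mathcal{F}^{\xi^{\lambda,z},c}_s]=Q^{\lambda,z}_th(\zeta^{\lambda,z}_s)$. This pins down $Q^{\lambda,z}_th$ only up to null sets of the law of $\zeta^{\lambda,z}_s$. The semigroup property at $(1,z)$ and at $(0,y_2)$, $y_2\neq z$, is left implicit there. It can be recovered from (ii) by continuity, because $\zeta^{\lambda,z}_r$ has a positive density on $\{1\}\times(\mathbb{R}\setminus\{z\})$.

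Your $h$-transform remark gives it directly. Take $K_t(x,\mathrm{d}y)=\frac{\phi(y)}{\phi(x)}p(t,y-x,\lambda t)\,\mathrm{d}y$ with $\phi=e^{-\lambda\alpha(z,\cdot)}$. Then $K_sK_t=K_{s+t}$, because the factors $\phi$ telescope against the Gaussian Chapman--Kolmogorov equation. Also $\Pi_t=1-K_t\mathbf{1}$ on all of $\mathbb{R}$ by continuity. Hence $\Pi_{s+t}=\Pi_s+K_s\Pi_t$ and $Q^{\lambda,z}_sQ^{\lambda,z}_t=Q^{\lambda,z}_{s+t}$ at every point.

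You still need the paper's conditional-expectation computation to identify $Q^{\lambda,z}$ as the semigroup of $\zeta^{\lambda,z}$; your proposal only gestures at it. Finally, $(1,z)$ is not unattainable. The process $\zeta^{\lambda,z}$ is there at the first hitting time of $z$, which precedes $\sigma_z^{\lambda}$ a.s. It is only charged with probability zero at each fixed time.
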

	\begin{proof}
		Firstly, we remark that $\mathbb{F}^{\xi^{\lambda,z},c}=\mathbb{F}^{\zeta^{\lambda,z},c}$. We claim that the process $\zeta^{\lambda,z}$ is Markovian with semigroup $Q^{\lambda,z}$ given by
		\begin{multline}
			Q^{\lambda,z}_th(y_1,y_2)=(1-y_1)h(y_1,y_2)
			+y_1\,h(0,z)\bigg[\Phi\Big(\lambda\sqrt{t}- \dfrac{\vert z-y_2 \vert}{\sqrt{t}}\Big)-\exp(2\lambda\,\vert z-y_2 \vert)\Phi\Big(-\lambda\sqrt{t}- \dfrac{\vert z-y_2 \vert}{\sqrt{t}}\Big)\bigg]\\+y_1\,\exp(\lambda\,\alpha(z,y_2))\displaystyle\int_{\mathbb{R}} h(1,\eta)\exp(-\lambda\,\alpha(z,\eta) )p(t,\eta-y_2,\lambda\,t)\hbox{d}\eta \label{eqQsemigroup}
		\end{multline}
		for any $t\geq 0$ and for every bounded measurable function $h$ defined on $\{0,1\}\times \mathbb{R}$. Indeed, Theorem \ref{thmMarkov} yields, $\mathbb{P}$-a.s.,
		\begin{align*}
			\mathbb{E}[h(\zeta_{t+s}^{\lambda,z})\vert\mathcal{F}_s^{\xi^{\lambda,z},c}]
			&=h(0,z)\mathbb{I}_{\{\sigma_z^{\lambda}\leq s\}}+\mathbb{E}[h(0,z)\mathbb{I}_{\{s<\sigma_z^{\lambda}\leq s+t\}}\vert\mathcal{F}_s^{\xi^{\lambda,z},c}]+\mathbb{E}[h(1,\xi_{s+t}^{\lambda,z})\mathbb{I}_{\{s+t<\sigma_z^{\lambda}\}}\vert\mathcal{F}_s^{\xi^{\lambda,z},c}]\\
			&=h(0,z)\mathbb{I}_{\{\sigma_z^{\lambda}\leq s\}}+\mathbb{E}[h(0,z)\mathbb{I}_{\{\xi_{s+t}^{\lambda,z}=z\}}\vert\xi_s^{\sigma_z^{\lambda}}]\mathbb{I}_{\{s<\sigma_z^{\lambda}\}}+\mathbb{E}[h(1,\xi_{s+t}^{\lambda,z})\mathbb{I}_{\{\xi_{s+t}^{\lambda,z}\neq z\}}\vert\xi_{s}^{\lambda,z}]\\
			&=h(0,z)(1-\mathbb{I}_{\{s<\sigma_z^{\lambda}\}})+h(0,z)\bigg[ \Phi\bigg(\lambda\sqrt{t}- \dfrac{\vert z-\xi^{\lambda,z}_{s} \vert}{\sqrt{t}}\bigg)-\exp(2\lambda\,\vert z-\xi^{\lambda,z}_{s} \vert)\nonumber\\
			&\times\Phi\bigg(-\lambda\sqrt{t}- \dfrac{\vert z-\xi^{\lambda,z}_{s} \vert}{\sqrt{t}}\bigg)\bigg]\mathbb{I}_{\{ s<\sigma_z^{\lambda} \}}+\exp(\lambda\,\alpha(z,\xi^{\lambda,z}_{s}))\nonumber\\
			&\times\displaystyle\int_{\mathbb{R}} h(1,\eta)\exp(-\lambda\,\alpha(z,\eta) )p(t,y-\xi^{\lambda,z}_{s},\lambda\,t)\mathrm{d}y\,\mathbb{I}_{\{s< \sigma_z^{\lambda}\}}\\
			&=Q^{\lambda,z}_th(\zeta_s^{\lambda,z}).
		\end{align*}
		Proof of statement (i): It is clear that for $y_2\neq z$
		\begin{equation*}
			\lim\limits_{t\rightarrow 0} \Phi\Big(\lambda\sqrt{t}- \dfrac{\vert z-y_2 \vert}{\sqrt{t}}\Big)=\lim\limits_{t\rightarrow 0} \Phi\Big(-\lambda\sqrt{t}- \dfrac{\vert z-y_2 \vert}{\sqrt{t}}\Big)=0.
		\end{equation*}
		and for $y_2=z$
		\begin{equation*}
			\lim\limits_{t\rightarrow 0} \Phi\Big(\lambda\sqrt{t}- \dfrac{\vert z-y_2 \vert}{\sqrt{t}}\Big)=\lim\limits_{t\rightarrow 0} \Phi\Big(-\lambda\sqrt{t}- \dfrac{\vert z-y_2 \vert}{\sqrt{t}}\Big)=1/2.
		\end{equation*}
		This yields that for any $y_2\in \mathbb{R}$,
		\begin{equation*}
			\lim\limits_{t\rightarrow 0} \bigg[\Phi\Big(\lambda\sqrt{t}- \dfrac{\vert z-y_2 \vert}{\sqrt{t}}\Big)-\exp(2\lambda\,\vert z-y_2 \vert)\Phi\Big(-\lambda\sqrt{t}- \dfrac{\vert z-y_2 \vert}{\sqrt{t}}\Big)\bigg]=0.
		\end{equation*}
		On the other hand, the functions $p(t, \cdot, y_2+\lambda\,t)$ are probability density functions with respect to the Lebesgue measure on $\mathbb{R}$. The probability measures $\mathbb{Q}_t$ on $\mathbb{R}$ associated with the density $p(t, \cdot, y_2+\lambda\,t)$ converge weakly as $t \downarrow 0$ to the Dirac measure $\delta_{y_2}$ at $y_2$. Moreover, the function $x\longrightarrow h(1,x)\exp(-\lambda\,\alpha(z,x))$ is bounded and continuous. Hence, we can pass to the limit and obtain the following
		\begin{equation*}
			\lim\limits_{t\rightarrow 0}\displaystyle\int_{\mathbb{R}} h(1,\eta)\exp(-\lambda\,\alpha(z,\eta) )p(t,\eta-y_2,\lambda\,t)\hbox{d}\eta=h(1,y_2)\,\exp(-\lambda\,\alpha(z,y_2)).
		\end{equation*}
		Thus, 
		\begin{equation*}
			\lim\limits_{t\rightarrow 0}\,Q^{\lambda,z}_th(y)=(1-y_1)h(y_1,y_2)+y_1h(1,y_2)=h(y).
		\end{equation*}
		Which completes the proof of the statement (i).\\
		Proof of statement (ii): Let $h\in C_0(\{0,1\}\times \mathbb{R})$. The function $y\longrightarrow Q^{\lambda,z}_th(y)$ is continuous on $\{0,1\}\times \mathbb{R}$ since $y_2\longrightarrow Q^{\lambda,z}_th(0,y_2)$ and $y_2\longrightarrow Q^{\lambda,z}_th(1,y_2)$ are continuous on $\mathbb{R}$. Let us now show that for all $t\geq 0$
		\begin{equation}
			\lim\limits_{\vert y\vert_2\rightarrow +\infty}Q^{\lambda,z}_th(y)=0.
		\end{equation}
		Note that $\vert y\vert_{2}^2=\vert y_1\vert^2+\vert y_2\vert^2$, since $y_1\in \{0,1\}$, $\vert y\vert_{2}\rightarrow +\infty$ if and only if $\vert y_2\vert\rightarrow +\infty$. Thus, it is sufficient to show that for all $t\geq 0$,
		\begin{equation*}
			\lim\limits_{\vert y_2\vert\rightarrow +\infty}Q^{\lambda,z}_th(y_1,y_2)=0.
		\end{equation*}
		For $t=0$, the above statement is true since $\vert Q^{\lambda,z}_0h(y_1,y_2)\vert=\vert h(y_1,y_2)\vert\rightarrow 0$ as $\vert y_2\vert\rightarrow +\infty$. Assume that $t>0$. Let $\varepsilon > 0$.
		It follows from \eqref{eqQsemigroup} that
		\begin{multline}
			\vert Q^{\lambda,z}_th(y_1,y_2) \vert \leq\vert h(y_1,y_2) \vert 
			+ \vert\vert h \vert\vert_{\infty}\bigg[\Phi\Big(\lambda\sqrt{t}- \dfrac{\vert z-y_2 \vert}{\sqrt{t}}\Big)+\exp(2\lambda\,\vert z-y_2 \vert)\Phi\Big(-\lambda\sqrt{t}- \dfrac{\vert z-y_2 \vert}{\sqrt{t}}\Big)\bigg]\\
			+\exp(\lambda\,\alpha(z,y_2))\displaystyle\int_{\mathbb{R}} \vert h(1,\eta)\vert \exp(-\lambda\,\alpha(z,\eta) )p(t,\eta-y_2,\lambda\,t)\hbox{d}\eta.\label{eqFeller00}
		\end{multline}
		Setting $\bar{x}=\lambda\sqrt{t}+\dfrac{\vert z-y_2 \vert}{\sqrt{t}}$, we have 
		\begin{equation*}
			\Phi(-\bar{x})=\frac{1}{\sqrt{2 \pi}} \int_{\bar{x}}^{+\infty} \exp\Big(-\dfrac{x^2}{2}\Big) \mathrm{d} x \leq\frac{1}{\sqrt{2 \pi}} \int_{\bar{x}}^{+\infty} \frac{x}{\bar{x}} \exp\Big(-\dfrac{x^2}{2}\Big) \mathrm{d} x=\frac{1}{\sqrt{2 \pi}} \frac{1}{\bar{x}} \exp\Big(-\dfrac{\bar{x}^2}{2}\Big).
		\end{equation*}
		Hence, 
		\begin{multline}
			\exp(2\lambda\,\vert z-y_2 \vert)\Phi\Big(-\lambda\sqrt{t}-\dfrac{\vert z-y_2 \vert}{\sqrt{t}}\Big)\leq\\
			\frac{1}{\sqrt{2 \pi}} \dfrac{\sqrt{t}}{\lambda\,t+\vert z-y_2 \vert}\exp\bigg(-\dfrac{1}{2}\bigg(\lambda\sqrt{t}-\dfrac{\vert z-y_2 \vert}{\sqrt{t}}\bigg)^2\bigg).\label{eqFeller1}
		\end{multline}
		On the other hand, change variables yields
		\begin{multline}
			I(y_2):=\exp(\lambda\,\alpha(z,y_2))\displaystyle\int_{\mathbb{R}} \vert h(1,\eta)\vert \exp(-\lambda\,\alpha(z,\eta) )p(t,\eta-y_2,\lambda\,t)\hbox{d}\eta=\\
			\displaystyle\int_{\mathbb{R}} \vert h(1,y_2+\eta)\vert \exp(\lambda(\alpha(z,\eta)-\alpha(z,y_2+\eta)) )p(t,\eta,\lambda\,t)\hbox{d}\eta
		\end{multline}
		We have for all $y_2$, $\eta$
		\begin{equation*}
			\vert \alpha(z,\eta)-\alpha(z,y_2+\eta)\vert \leq 2\vert \eta \vert,
		\end{equation*}
		so
		\begin{equation*}
			\vert h(1,y_2+\eta)\vert \exp(\lambda(\alpha(z,\eta)-\alpha(z,y_2+\eta)) )p(t,\eta,\lambda\,t)\leq \vert\vert h \vert\vert_{\infty} \exp(2\lambda\,\vert \eta \vert) p(t,\eta,\lambda\,t).
		\end{equation*}
		The right-hand side is an integrable function of $\eta$ (the Gaussian factor dominates the exponential $\exp(2\lambda\,\vert \eta \vert)$), so by dominated convergence the integrand tends to $0$ for each fixed $\eta$ and we may pass the limit inside the integral. Hence
		\begin{equation*}
			\lim\limits_{\vert y\vert_2\rightarrow +\infty}I(y_2)=0.
		\end{equation*}
		Which completes the proof of the statement (ii). 
	\end{proof}
	\begin{remark}
		The statements (i) and (ii) together with the semigroup property imply the strong continuity $Q^{\lambda,z}_t h \rightarrow h$ as $t \rightarrow 0, h \in C_0(\{0,1\}\times \mathbb{R})$, see \cite[Theorem 19.6]{K}.
	\end{remark}
	\begin{theorem}\label{thmonlyTIST}
		The last passage time $\sigma_z^{\lambda}$ is the only stopping time totally inaccessible within the filtration $\mathbb{F}^{\xi^{\lambda,z},c}$. That is, for every stopping time $T$, we have $\mathbb{P}(T=\sigma_z^{\lambda})=0$ if and only if the stopping time $T$ is predictable.  
	\end{theorem}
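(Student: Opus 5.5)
Both directions rest on Theorem~\ref{thmFeller} and the equality of filtrations \eqref{eqcoincidefiltrations}. They also use the standard characterization of stopping times in Feller filtrations recalled in Theorem~\ref{thmappendixpredictable} (Kallenberg): for a Feller process $X$ with right-continuous paths, an optional time $T$ of the completed filtration is predictable if and only if $X_{T-}=X_T$ a.s.\ on $\{T<\infty\}$. It is totally inaccessible if and only if $X_{T-}\neq X_T$ a.s.\ on $\{T<\infty\}$. We apply this to $\zeta^{\lambda,z}$, whose completed filtration is $\mathbb{F}^{\xi^{\lambda,z},c}$.

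First I would identify the discontinuities of $\zeta^{\lambda,z}$. The second coordinate $\xi^{\lambda,z}$ is continuous, and $t\mapsto \mathbb{I}_{\{t<\sigma_z^{\lambda}\}}$ is right-continuous with a single jump, from $1$ to $0$, at $t=\sigma_z^{\lambda}$. Hence the paths of $\zeta^{\lambda,z}$ are c\`adl\`ag, and
\[
\{t>0:\ \zeta^{\lambda,z}_{t-}\neq \zeta^{\lambda,z}_t\}=\{\sigma_z^{\lambda}\},
\]
where we use $\sigma_z^{\lambda}>0$ a.s., which follows from \eqref{eqsigmazlamdalaw}. Since $\sigma_z^{\lambda}<\infty$ a.s., for any stopping time $T$ we get, up to null sets,
\[
\{T<\infty,\ \zeta^{\lambda,z}_{T-}\neq\zeta^{\lambda,z}_T\}=\{T=\sigma_z^{\lambda}\}.
\]
A minor point is the case $T=0$, where $\zeta_{0-}$ is taken as $\zeta_0$; this is harmless because $\sigma_z^{\lambda}>0$ a.s.

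Now the two directions. If $\mathbb{P}(T=\sigma_z^{\lambda})=0$, then $\zeta^{\lambda,z}$ is a.s.\ continuous at $T$ on $\{T<\infty\}$, so the Feller criterion gives that $T$ is predictable. Conversely, suppose $T$ is predictable. The set $\{T=\sigma_z^{\lambda}\}$ lies in $\mathcal{F}_T$, so the restriction $T_{\{T=\sigma_z^{\lambda}\}}$ is again a stopping time, and it is predictable because $T$ is. On the other hand $T_{\{T=\sigma_z^{\lambda}\}}$ coincides with $\sigma_z^{\lambda}$ on its finiteness set. Since $\sigma_z^{\lambda}$ is totally inaccessible by Corollary~\ref{cortotallyinaccessible}, we get $\mathbb{P}(T_{\{T=\sigma_z^{\lambda}\}}=\sigma_z^{\lambda}<\infty)=0$, i.e.\ $\mathbb{P}(T=\sigma_z^{\lambda})=0$. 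Alternatively, this direction also follows from quasi-left-continuity (Theorem~\ref{thmquasi-left-continuous}) together with the continuity of the compensator from Theorem~\ref{thmcompensatorsigma}.

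The main obstacle is to justify applying the Feller characterization to $\zeta^{\lambda,z}$ under $\mathbb{P}$ with the completed filtration. Kallenberg's statement is for the canonical Feller process under all initial laws $\mathbb{P}_\mu$, with the usual augmentation. I would handle this by noting three facts. First, the law of $\zeta^{\lambda,z}$ under $\mathbb{P}$ is the Feller law with initial distribution $\delta_{(1,0)}$. Second, $\mathbb{F}^{\xi^{\lambda,z},c}$ is right-continuous by Corollary~\ref{corfiltrationrightcontinuity}. Third, stopping times of the completed filtration coincide a.s.\ with stopping times of the canonical augmented filtration. One also has to check that $\zeta^{\lambda,z}$ really has c\`adl\`ag paths, which it does with the indicator defined as $\mathbb{I}_{\{t<\sigma_z^{\lambda}\}}$ rather than $\mathbb{I}_{\{\xi_t\neq z\}}$.
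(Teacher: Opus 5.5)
Your proof is the paper's own argument, written out in more detail. The paper's proof is a single line citing four ingredients, and you use exactly those: the Feller property of $\zeta^{\lambda,z}$ (Theorem~\ref{thmFeller}), the equality of completed filtrations \eqref{eqcoincidefiltrations}, Kallenberg's criterion (Theorem~\ref{thmappendixpredictable}), and the fact that $\zeta^{\lambda,z}$ jumps only at $\sigma_z^{\lambda}$. Your identification of the jump set, your forward direction, and your remark that the indicator must be $\mathbb{I}_{\{t<\sigma_z^{\lambda}\}}$ to get c\`adl\`ag paths spell out details the paper leaves implicit.

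One justification in your converse direction is wrong as stated. You claim the restriction $T_{\{T=\sigma_z^{\lambda}\}}$ of a predictable time is predictable ``because $T$ is''. For a general set in $\mathcal{F}_T$ this is false: restricting a predictable $T$ to $A$ gives a predictable time only when $A\in\mathcal{F}_{T-}$ (up to null sets). Here it happens to hold, but only because of the quasi-left-continuity in Theorem~\ref{thmquasi-left-continuous}, which your main argument does not invoke.

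The detour is unnecessary in any case, and either of two direct arguments closes the step. First, by the definition of total inaccessibility, Corollary~\ref{cortotallyinaccessible} gives $\mathbb{P}(T=\sigma_z^{\lambda}<\infty)=0$ for every predictable $T$, with no restriction needed. Second, in the spirit of the paper, apply the implication (i)$\Rightarrow$(iii) of Theorem~\ref{thmappendixpredictable} to $T$ itself. This gives $\zeta^{\lambda,z}_{T-}=\zeta^{\lambda,z}_T$ a.s.\ on $\{T<\infty\}$, which rules out $T=\sigma_z^{\lambda}$ on a set of positive probability.
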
		
	\begin{proof}
		The proof is an immediate consequence of Theorem \ref{thmFeller}, Theorem \ref{thmappendixpredictable}, the equality \eqref{eqcoincidefiltrations} and the fact that the process $\zeta^{\lambda,z}$ jumps only at $\sigma_z^{\lambda}$.
	\end{proof}
	\begin{remark}
		Let $T$ be a stopping time with respect to $\mathbb{F}^{\xi^{\lambda,z},c}$. 
		Then $T=T_{\{T=\sigma_z^{\lambda}\}}\wedge T_{\{T\neq \sigma_z^{\lambda}\}}$, where $T_{\{T=\sigma_z^{\lambda}\}}$ is totally inaccessible and $T_{\{T\neq \sigma_z^{\lambda}\}}$ is predictable, where $T_{\{T=\sigma_z^{\lambda}\}}$ and $T_{\{T\neq \sigma_z^{\lambda}\}}$ are the restrictions of $T$ to $\{T=\sigma_z^{\lambda}\}$ and $\{T\neq \sigma_z^{\lambda}\}$, respectively. 
	\end{remark}
	\section{Generator and Its Associated PDE}
	\label{sectiongenerator}
	In the previous sections, we have shown that the process \(\xi^{\lambda,z}\) is Markovian but not strong Markov. We have provided its canonical decomposition as a semi-martingale in its own (completed) filtration. To capture both the trajectory of \(\xi^{\lambda,z}\) and the occurrence of the stopping time \(\sigma_z^\lambda\), we introduced the process
	\[
	\zeta^{\lambda,z}_t = \left(\mathbb{I}_{\{t<\sigma_z^\lambda\}}, \xi_t^{\lambda,z}\right),
	\]
	and established that it defines a Feller process on the state space \(\mathcal{S} = \{0,1\} \times \mathbb{R}\). 
	In this section, we determine the infinitesimal generator $A^{\lambda,z}$ of the Feller process $\zeta^{\lambda,z}$. This operator encodes the local dynamics of the process and plays a central role in both its probabilistic and analytic study. In particular, it provides a natural framework for constructing martingales and for deriving the associated partial differential equations (see, e.g., \cite{RY}). Moreover, such operators also arise in the analysis of last-passage times for linear diffusions; see, for instance, \cite{CCS}.
	\begin{theorem}\label{thmgenerator}
		The infinitesimal generator $A^{\lambda,z}$ of $\zeta^{\lambda,z}$ is given by
		\begin{align*}
			\begin{array}{ll}
				A^{\lambda,z}h(y_1,y_2)=\begin{cases} 0 & y_1=0, y_2\in \mathbb{R} \\
					-\lambda y_1\,\dfrac{\partial h}{\partial y_2}(y_1,y_2) +  \dfrac{y_1}{2}\,\dfrac{\partial^2 h}{\partial ^2y_2}(y_1,y_2) &  y_1=1, z<y_2 \\
					\dfrac{y_1}{2}\,\dfrac{\partial^2 h}{\partial ^2y_2}(y_1,y_2) &  y_1=1, y_2=z \\
					\lambda y_1\,\dfrac{\partial h}{\partial y_2}(y_1,y_2) +  \dfrac{y_1}{2}\,\dfrac{\partial^2 h}{\partial ^2y_2}(y_1,y_2) &  y_1=1, z>y_2
				\end{cases}
			\end{array}
		\end{align*}
		Moreover, the domain of $A^{\lambda,z}$ is given by
		\begin{equation*}
			\mathcal{D}(A^{\lambda,z})=\left\{h\in C_0(\mathcal{S}); \dfrac{\partial h}{\partial y_2}, \text{ and } \dfrac{\partial^2 h}{\partial ^2y_2} \text{ exist,  continuous, }h(0,z)=h(1,z)=\dfrac{\partial h}{\partial y_2}(1,z)=0 \right\}.
		\end{equation*}
	\end{theorem}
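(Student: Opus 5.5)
We plan to compute $A^{\lambda,z}h=\lim_{t\downarrow 0}\frac{1}{t}(Q^{\lambda,z}_th-h)$ directly from the explicit semigroup \eqref{eqQsemigroup}, splitting according to the value of $y=(y_1,y_2)$. For $y_1=0$ the formula gives $Q^{\lambda,z}_th(0,y_2)=h(0,y_2)$, so the limit is $0$. For $y_1=1$ we write
\begin{equation*}
Q^{\lambda,z}_th(1,y_2)-h(1,y_2)=h(0,z)\,\Psi_t(y_2)+\Big(e^{\lambda\alpha(z,y_2)}\int_{\mathbb{R}}h(1,\eta)e^{-\lambda\alpha(z,\eta)}p(t,\eta-y_2,\lambda t)\,\mathrm{d}\eta-h(1,y_2)\Big),
\end{equation*}
where $\Psi_t$ is the bracket of $\Phi$ terms, i.e. $\mathbb{P}(\sigma_z^\lambda\le t\mid\xi_0=y_2)$. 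Setting $g(\eta)=h(1,\eta)e^{-\lambda\alpha(z,\eta)}$, the second term is $e^{\lambda\alpha(z,y_2)}(\mathbb{E}[g(y_2+B^\lambda_t)]-g(y_2))$, i.e. governed by the generator $\lambda\partial+\frac12\partial^2$ of $B^\lambda$ applied to $g$.

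Key steps in order. First, for $y_2\neq z$: $\Psi_t(y_2)$ decays like $e^{-c/t}$ by the Gaussian tail bound \eqref{eqFeller1}, so it contributes nothing; and $g$ is $C^2$ near $y_2$, so the limit is $e^{\lambda\alpha(z,y_2)}(\lambda g'+\frac12g'')(y_2)$. For $y_2<z$, $\alpha=0$, $g=h(1,\cdot)$, giving $\lambda h'+\frac12h''$. For $y_2>z$, $\alpha(z,\eta)=2(\eta-z)$, so $g=h e^{-2\lambda(\eta-z)}$ and $g'=(h'-2\lambda h)e^{\cdot}$, $g''=(h''-4\lambda h'+4\lambda^2h)e^{\cdot}$; hence $\lambda g'+\frac12 g''=e^{\cdot}(\frac12h''-\lambda h')$, matching the claimed form (the Doob $h$-transform reverses the drift). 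Second, the point $y_2=z$: here $\Psi_t(z)=2\Phi(\lambda\sqrt t)-1\sim 2\lambda\sqrt{t}\,\varphi(0)$, which divided by $t$ blows up like $t^{-1/2}$; this forces $h(0,z)=0$. Similarly, $g$ has a kink at $z$ with $g'(z+)-g'(z-)=h'(1,z)\cdot0-2\lambda h(1,z)$, and a kink contributes a term of order $\sqrt t\,\mathbb{E}|B_t|/t$-type, i.e. $t^{-1/2}$ times the jump in $g'$; together with the continuity of $h'$ this yields the condition $h(1,z)=0$. With these, expanding $g$ on each side to second order, $g''(z\pm)=h''(1,z)\mp 4\lambda h'(1,z)$, and the mean of the one-sided second derivatives plus the drift terms gives $\frac12h''(1,z)$ only if the first-order contributions $\lambda h'(1,z)$ from both sides cancel, which is where $h'(1,z)=0$ is needed.

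Third, to identify the domain exactly, we show the converse: if $h\in\mathcal{D}$ as stated, the above pointwise limits hold uniformly in $y$ (using uniform continuity of $h'',h'$ on compacts, the $C_0$ decay, and the dominated bound $e^{2\lambda|\eta|}p(t,\eta,\lambda t)$ from the proof of Theorem \ref{thmFeller}) and $A^{\lambda,z}h\in C_0(\mathcal{S})$; note continuity of $A^{\lambda,z}h$ at $(1,z)$ follows since $h'(1,z)=0$ makes the drift terms vanish there. Conversely, for $h$ in the true domain, one uses that $Q_t h$ solves the martingale problem / Dynkin's formula, or the standard fact (\cite[Ch.~19]{K}) that the generator is determined by its restriction to a core; we would show the stated set is invariant and dense, hence a core, and that the pointwise generator forces the boundary conditions as above.

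The main obstacle is the analysis at $y_2=z$: carefully extracting the $t^{-1/2}$ singular terms from both $\Psi_t$ and the kinked integral, and checking that the exact boundary conditions $h(0,z)=h(1,z)=\partial_{y_2}h(1,z)=0$ are both sufficient for uniform convergence and necessary. I would first verify sufficiency via a second-order Taylor expansion of $g$ on each half-line with explicit Gaussian moment computations, then deduce necessity from the explicit leading asymptotics.
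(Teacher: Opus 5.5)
Your route is the paper's: you expand $(Q^{\lambda,z}_th-h)/t$ from \eqref{eqQsemigroup}, using $g=e^{-\lambda\alpha(z,\cdot)}h(1,\cdot)$ and the generator $\lambda\partial+\frac12\partial^2$ of $B^{\lambda}$ acting on $g$. For $y_1=0$ and for $y_1=1$, $y_2\neq z$, this is fine and agrees with the paper.

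\textbf{The gap: necessity at $(1,z)$.} The two $t^{-1/2}$ singularities you find are not independent. Your term $h(0,z)\Psi_t(z)/t$ equals $\frac{2\lambda}{\sqrt{2\pi t}}h(0,z)+O(\sqrt t)$. The kink of $g$ (the jump $-2\lambda h(1,z)$ of $g'$) contributes $-2\lambda h(1,z)\,\mathbb{E}[(B^{\lambda}_t)^+]/t$. Since $\mathbb{E}[(B^{\lambda}_t)^+]=\sqrt{t/(2\pi)}+O(t)$, this is $-\frac{2\lambda}{\sqrt{2\pi t}}h(1,z)+O(1)$. The sum is $\frac{2\lambda}{\sqrt{2\pi t}}\big(h(0,z)-h(1,z)\big)$, so only $h(0,z)=h(1,z)$ is forced, not that both values vanish.

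Once $h(0,z)=h(1,z)$ holds, expand to order $t$ with the correct one-sided values $g''(z-)=\partial^2_{y_2}h(1,z)$ (your value is off) and $g''(z+)=\partial^2_{y_2}h(1,z)-4\lambda\partial_{y_2}h(1,z)+4\lambda^2h(1,z)$. This gives the pointwise limit $\frac12\partial^2_{y_2}h(1,z)$ whatever $\partial_{y_2}h(1,z)$ is. The condition $\partial_{y_2}h(1,z)=0$ comes only from continuity of $A^{\lambda,z}h$ at $(1,z)$.

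The cleanest way to see all this uses $\varphi=e^{-\lambda\alpha(z,\cdot)}$. Since $\Psi_t(x)=1-\mathbb{E}[\varphi(x+B^{\lambda}_t)]/\varphi(x)$,
\[
Q^{\lambda,z}_th(1,x)-h(1,x)=\frac{1}{\varphi(x)}\Big(\mathbb{E}\big[(\varphi u)(x+B^{\lambda}_t)\big]-(\varphi u)(x)\Big),\qquad u=h(1,\cdot)-h(0,z).
\]
So everything at $z$ hinges on whether $\varphi u$ is $C^2$ across $z$. That means two conditions:
\begin{itemize}
\item $\partial_{y_2}h(1,z+)-\partial_{y_2}h(1,z-)=2\lambda\big(h(1,z)-h(0,z)\big)$;
\item $\lambda\partial_{y_2}h+\frac12\partial^2_{y_2}h$ at $(1,z-)$ equals $-\lambda\partial_{y_2}h+\frac12\partial^2_{y_2}h$ at $(1,z+)$.
\end{itemize}

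\textbf{The statement's domain is wrong.} Consequently the ``only if'' direction cannot be proved: the displayed set is not $\mathcal{D}(A^{\lambda,z})$.
\begin{itemize}
\item Take $h(0,\cdot)=h(1,\cdot)=\psi$ with $\psi\in C_c^\infty(\mathbb{R})$ and $\psi\equiv1$ near $z$. Then $\varphi u$ is smooth, and the factor $1/\varphi(x)=e^{2\lambda(x-z)}$ for large $x$ is beaten by Gaussian tails. So $h\in\mathcal{D}(A^{\lambda,z})$ with $A^{\lambda,z}h(1,\cdot)=\lambda\,\text{sgn}(z-\cdot)\psi'+\frac12\psi''$, although $h(0,z)=1$.
\item Without any estimate: $\mathcal{D}(A^{\lambda,z})$ is $Q^{\lambda,z}_t$-invariant. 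Take $h$ in the displayed set with $h(0,\cdot)=0$ and $h(1,\cdot)\in C_c^\infty$ nonnegative, nonzero, vanishing near $z$. Then $Q^{\lambda,z}_th(1,z)=\int h(1,\eta)e^{-\lambda\alpha(z,\eta)}p(t,\eta-z,\lambda t)\,\mathrm{d}\eta>0$, so $Q^{\lambda,z}_th$ leaves the displayed set.
\item Any $h$ with $h(1,\cdot)\equiv0$, $h(0,z)=0$ and $h(0,\cdot)\in C_0$ nondifferentiable satisfies $Q^{\lambda,z}_th=h$, so it lies in the domain but not in the displayed set.
\end{itemize}
The paper's proof does not resolve this either; it asserts the ``iff'' as a ``direct computation''. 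So the defect is in the statement itself. What your computation actually proves is the formula for $A^{\lambda,z}h$ and, for $h(1,\cdot)$ of class $C^2$, the conditions $h(0,z)=h(1,z)$ and $\partial_{y_2}h(1,z)=0$.

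\textbf{Two further steps would fail regardless.} First, your core argument: the displayed set is not $Q^{\lambda,z}_t$-invariant (as just shown), and a core would in any case determine $A^{\lambda,z}$ only through its closure, not its domain. Second, sufficiency: you need $\partial_{y_2}h$ and $\partial^2_{y_2}h$ to decay at infinity so that $A^{\lambda,z}h\in C_0(\mathcal{S})$ and the convergence is uniform. Uniform continuity on compacts does not give this, and the displayed set imposes no such decay.
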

	\begin{proof}
		A function $h \in C_0(\mathcal{S})$ is said to belong to the domain $\mathcal{D}(A^{\lambda,z})$ of the infinitesimal generator of $\zeta^{\lambda,z}$ if the limit
		$$
		A^{\lambda,z} h=\lim _{t \rightarrow 0} \frac{Q_t h-h}{t}
		$$
		exists in $C_0(\mathcal{S})$. We have,
		\begin{align}
			\dfrac{Q_th(y_1,y_2)-h(y_1,y_2)}{t}&=y_1\,h(0,z)\dfrac{\Phi\Big(\lambda\sqrt{t}- \frac{\vert z-y_2 \vert}{\sqrt{t}}\Big)-\exp(2\lambda\,\vert z-y_2 \vert)\Phi\Big(-\lambda\sqrt{t}- \frac{\vert z-y_2 \vert}{\sqrt{t}}\Big)}{t}\nonumber\\&+y_1\,\exp(\lambda\,\alpha(z,y_2))\dint_{\mathbb{R}} \dfrac{g(\eta)-g(y_2)}{t}p(t,\eta-y_2,\lambda\,t)\hbox{d}\eta \label{eq(Q_t-Q_0)/t}
		\end{align}
		where 
		\begin{equation}
			g(x)=\exp(-\lambda\,\alpha(z,x) )h(1,x).
		\end{equation}
		Thus, if $y_1=1$ and $y_2\neq z$, we obtain 
		\begin{align*}
			\lim\limits_{t\rightarrow 0}\dfrac{Q_th(y_1,y_2)-h(y_1,y_2)}{t}&=y_1\,\exp(\lambda\,\alpha(z,y_2))\Big(\lambda g{'}(y_2)+\dfrac{1}{2}g{''}(y_2)\Big),
		\end{align*}
		where,
		\begin{equation}
			\begin{array}{ll}
				g{'}(y_2)=\begin{cases}\left(\frac{\partial h}{\partial y_2}(y_1,y_2)-2 \lambda h(y_1,y_2)\right)\exp(2\lambda\,(z-y_2)) & z<y_2 \\
					\frac{\partial h}{\partial y_2}(y_1,y_2) & z>y_2
				\end{cases}
			\end{array}
		\end{equation}
		and
		\begin{equation}
			\begin{array}{ll}
				g{''}(y_2)=\begin{cases}\left(\frac{\partial^2 h}{\partial ^2y_2}(y_1,y_2)-4 \lambda \frac{\partial h}{\partial y}(y_1,y_2)+4\lambda^2h(y_1,y_2)\right)\exp(2\lambda\,(z-y_2)) & z<y_2 \\
					\frac{\partial^2 h}{\partial ^2y_2}(y_1,y_2) & z>y_2
				\end{cases}.
			\end{array}
		\end{equation}
		If $y_1=1$ and $y_2= z$, the equation \eqref{eq(Q_t-Q_0)/t} takes the form
		\begin{align}
			\dfrac{Q_th(y_1,y_2)-h(y_1,y_2)}{t}&=h(0,z)\dfrac{\Phi(\lambda\sqrt{t})-\Phi(-\lambda\sqrt{t})}{t}+\dint_{\mathbb{R}} \dfrac{g(\eta)-g(z)}{t}p(t,\eta-z,\lambda\,t)\hbox{d}\eta.\label{eqy_1=1,y_2=z}
		\end{align}
		The second term in \ref{eqy_1=1,y_2=z} decomposes as
		\begin{multline*}
			\dint_{\mathbb{R}} \dfrac{g(\eta)-g(z)}{t}p(t,\eta-z,\lambda\,t)\hbox{d}\eta=\dint_{-\infty}^{z} \dfrac{h(1,\eta)-h(1,z)}{t}p(t,\eta-z,\lambda\,t)\hbox{d}\eta\\
			+\dint_{z}^{+\infty} \dfrac{\exp(2\lambda (z-\eta))h(1,\eta)-h(1,z)}{t}p(t,\eta-z,\lambda\,t)\hbox{d}\eta.
		\end{multline*}
		A direct computation shows that the limit of $\frac{Q_t h-h}{t}$ exits as $t$ approaches $0$ if and only if $h(0,z)=h(1,z)=\frac{\partial h}{\partial y_2}(1,z)=0$, in which case the limit is
		$$\dfrac{y_1}{2}\,\dfrac{\partial^2 h}{\partial ^2y_2}(y_1,y_2).$$
		This completes the proof.
	\end{proof}
	\begin{remark}
		The generator $A^{\lambda,z}$ reflects the continuous nature of the process $\xi^{\lambda,z}$, which evolves as a Brownian motion with drift $\lambda$ until the last time it reaches the level $z$. The auxiliary process $\zeta^{\lambda,z}_t = (\mathbb{I}_{\{t<\sigma_z^\lambda\}}, \xi_t^{\lambda,z})$ encodes both the position and whether the process has been stopped, enabling a Feller process description on the state space $\mathcal{S}=\{0,1\} \times \mathbb{R}$. The generator acts non-trivially only on the active state $(1, y_2)$: when $y_2 > z$, the drift term is negative (pulling back toward $z$), and when $y_2 < z$, the drift is positive (pushing up toward $z$), which reflects the conditioning on hitting $z$ again. At the level $z$, the drift vanishes, and only the diffusion term remains. The domain condition $h(0,z) = h(1,z) = \frac{\partial h}{\partial y_2}(1,z) = 0$ ensures smoothness at the boundary and aligns with the fact that the process is absorbed at $z$ after the random time $\sigma_z^\lambda$.
	\end{remark}
	Infinitesimal generators provide a key link between Feller processes and martingales. Knowing the generator of a Feller process allows for the construction of various important martingales. The following result illustrates this connection:
	\begin{theorem}
		For every $h \in \mathcal{D}(A^{\lambda,z})$, the process
		\begin{multline}
			M_t^{\lambda,z}=h(\mathbb{I}_{\{t<\sigma_z^{\lambda}\}},\xi_t^{\lambda,z})-h(1,0)-\displaystyle\int_0^{t} \left(-\lambda\,\dfrac{\partial h}{\partial y_2}(1,\xi_s^{\lambda,z}) +  \dfrac{1}{2}\,\dfrac{\partial^2 h}{\partial ^2y_2}(1,\xi_s^{\lambda,z})\right) \mathbb{I}_{\{z<\xi_s^{\lambda,z}\}} \mathrm{d} s\\
			-\displaystyle\int_0^{t} \left(\lambda\,\dfrac{\partial h}{\partial y_2}(1,\xi_s^{\lambda,z}) +  \dfrac{1}{2}\,\dfrac{\partial^2 h}{\partial ^2y_2}(1,\xi_s^{\lambda,z})\right) \mathbb{I}_{\{\xi_s^{\lambda,z}<z\}} \mathrm{d} s
		\end{multline}
		is a martingale with respect to $\mathbb{F}^{\xi^{\lambda,z}}$.
	\end{theorem}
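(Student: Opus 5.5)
The plan is to deduce the statement from Theorem \ref{thmFeller} and Theorem \ref{thmgenerator} through Dynkin's formula for Feller processes (\cite[Lemma 19.21]{K}). For a Feller process $X$ with generator $A$, and for $h\in\mathcal{D}(A)$, the process $h(X_t)-h(X_0)-\int_0^t Ah(X_s)\,\mathrm{d}s$ is a martingale with respect to the natural filtration of $X$. We apply this with $X=\zeta^{\lambda,z}$, whose semigroup $Q^{\lambda,z}$ was shown to be Feller. This gives a martingale with respect to $\mathbb{F}^{\zeta^{\lambda,z}}$, and hence, by \eqref{eqcoincidefiltrations}, with respect to $\mathbb{F}^{\xi^{\lambda,z},c}$. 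It is then also a martingale for the smaller filtration $\mathbb{F}^{\xi^{\lambda,z}}$, because the process is adapted to it and conditional expectations given $\mathcal{F}_s^{\xi^{\lambda,z}}$ and $\mathcal{F}_s^{\xi^{\lambda,z},c}$ agree a.s. The only ingredient of Dynkin's formula actually needed is the identity $\frac{\mathrm{d}}{\mathrm{d}t}Q_th=Q_tAh$, which holds on the domain. With it, $\mathbb{E}[h(\zeta_{t})-h(\zeta_s)-\int_s^tAh(\zeta_u)\,\mathrm{d}u\mid\mathcal{F}_s]=Q_{t-s}h(\zeta_s)-h(\zeta_s)-\int_0^{t-s}Q_uAh(\zeta_s)\,\mathrm{d}u=0$, using the Markov property established in the proof of Theorem \ref{thmFeller}.

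Next, $Ah(\zeta_s^{\lambda,z})$ is identified with the integrands in the statement. We have $\zeta_0^{\lambda,z}=(1,0)$ a.s., since $\sigma_z^\lambda>0$ a.s. The cases of Theorem \ref{thmgenerator} give the following values.
\begin{itemize}
\item On $\{s\ge\sigma_z^\lambda\}$ the first coordinate is $0$, so $Ah=0$.
\item On $\{s<\sigma_z^\lambda\}$ with $\xi_s^{\lambda,z}>z$ (resp. $<z$), $Ah$ equals the first (resp. second) bracket in the statement.
\item On $\{s<\sigma_z^\lambda,\ \xi^{\lambda,z}_s=z\}$, $Ah=\frac12\partial^2_{y_2}h(1,z)$.
\end{itemize}
The last set is Lebesgue-null in $s$ a.s. by the occupation times formula, because $\langle\xi^{\lambda,z}\rangle_t=t\wedge\sigma_z^\lambda$ (Theorem \ref{thmsemimartingale}). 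Moreover, on $\{s\ge \sigma_z^\lambda\}$ we have $\xi_s^{\lambda,z}=z$, so both indicators $\mathbb{I}_{\{z<\xi_s\}}$ and $\mathbb{I}_{\{\xi_s<z\}}$ vanish, consistently with $Ah=0$ there. Hence $\int_0^tAh(\zeta_s)\,\mathrm{d}s$ coincides a.s. with the two integrals in $M^{\lambda,z}_t$. Integrability is automatic, since $h$ and $Ah$ are bounded ($h\in C_0(\mathcal S)$ and $Ah\in C_0(\mathcal S)$).

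The main obstacle is the rigor of the Dynkin step, namely that $u\mapsto Q_uAh$ is continuous and $Q_uh-h=\int_0^uQ_rAh\,\mathrm{d}r$. This is standard once $Q^{\lambda,z}$ is a strongly continuous contraction semigroup on $C_0(\mathcal S)$, which is provided by the Remark after Theorem \ref{thmFeller}. I would invoke that general result rather than reprove it.

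As a fallback, one can avoid semigroup theory altogether. Apply Itô's formula to $h(1,\xi^{\lambda,z}_t)$ on $[0,\sigma_z^\lambda)$ using Theorem \ref{thmsemimartingale}; the drift $\lambda\,\mathrm{sgn}(z-\xi)$ produces exactly the brackets. Then account for the jump of $\zeta$ at $\sigma_z^\lambda$, whose size is $h(0,z)-h(1,z)$ and which vanishes by the domain conditions. One must also check that $h(1,\cdot)$ is $C^2$ across $z$; this holds by the continuity assumptions in $\mathcal{D}(A^{\lambda,z})$. The resulting stochastic integral against the stopped Brownian motion $b^{\lambda,z}$ has a bounded integrand, so it is a true martingale.
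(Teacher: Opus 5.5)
Your main argument is the paper's own proof. The paper simply says the result is an immediate consequence of Theorem \ref{thmgenerator} and the martingale problem, i.e.\ Dynkin's formula for the Feller process $\zeta^{\lambda,z}$. Your extra checks (identifying $A^{\lambda,z}h(\zeta^{\lambda,z}_s)$ with the two integrands via the Lebesgue-null set $\{s<\sigma_z^{\lambda},\ \xi^{\lambda,z}_s=z\}$, the starting point $\zeta^{\lambda,z}_0=(1,0)$, and passing from $\mathbb{F}^{\xi^{\lambda,z},c}$ to $\mathbb{F}^{\xi^{\lambda,z}}$) just fill in details the paper leaves implicit. The one step you assert without justification, that $M^{\lambda,z}$ is adapted to the uncompleted filtration, follows from the domain condition $h(0,z)=h(1,z)$, which gives $h(\zeta^{\lambda,z}_t)=h(1,\xi^{\lambda,z}_t)$ identically.
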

	\begin{proof}
		Immediate consequence of Theorem \ref{thmgenerator} and the martingale problem.
	\end{proof}
	\begin{remark}
		By Dynkin's formula, for every $\mathbb{F}^{\xi^{\lambda,z},c}$-stopping time $\tau$ with $\mathbb{E}[\tau]<\infty$, one has
		\begin{equation}
			\mathbb{E}[h(\mathbb{I}_{\{\tau<\sigma_z^{\lambda}\}},\xi_{\tau}^{\lambda,z})]-h(1,0)=\mathbb{E} \left[\displaystyle\int_{0}^{\tau} A^{\lambda,z} h(\mathbb{I}_{\{s<\sigma_z^{\lambda}\}},\xi_s^{\lambda,z}) \mathrm{d}s\right], \quad h \in \mathcal{D}(A^{\lambda,z}).
		\end{equation}
		In particular, we have
		\begin{multline}
			h(1,0)+\mathbb{E}\left[\displaystyle\int_0^{t} \left(-\lambda\,\dfrac{\partial h}{\partial y_2}(1,\xi_s^{\lambda,z}) +  \dfrac{1}{2}\,\dfrac{\partial^2 h}{\partial ^2y_2}(1,\xi_s^{\lambda,z})\right) \mathbb{I}_{\{z<\xi_s^{\lambda,z}\}} \mathrm{d} s\right]\\
			+\mathbb{E}\left[\displaystyle\int_0^{t} \left(\lambda\,\dfrac{\partial h}{\partial y_2}(1,\xi_s^{\lambda,z}) +  \dfrac{1}{2}\,\dfrac{\partial^2 h}{\partial ^2y_2}(1,\xi_s^{\lambda,z})\right) \mathbb{I}_{\{\xi_s^{\lambda,z}<z\}} \mathrm{d} s\right]=0
		\end{multline}
		for every $h \in \mathcal{D}(A^{\lambda,z})$.
	\end{remark}
	\begin{example}
		The process $N^{\lambda,z}=(N_t^{\lambda,z},\,t\geq 0)$ given by
		\begin{multline*}
			N_t^{\lambda,z}=(\xi_t^{\lambda,z}-z)^2\exp(-(\xi_t^{\lambda,z}-z)^2)-z^2\exp(-z^2)\\-\displaystyle\int_0^{t}\left(2\lambda (\xi_s^{\lambda,z}-z)((\xi_s^{\lambda,z}-z)^2-1)+[1-5(\xi_s^{\lambda,z}-z)^2+2(\xi_s^{\lambda,z}-z)^4]\right)\exp(-(\xi_s^{\lambda,z}-z)^2)\mathbb{I}_{\{z<\xi_s^{\lambda,z}\}} \mathrm{d} s\\
			-\displaystyle\int_0^{t}\left(2\lambda (z-\xi_s^{\lambda,z})((\xi_s^{\lambda,z}-z)^2-1)+[1-5(\xi_s^{\lambda,z}-z)^2+2(\xi_s^{\lambda,z}-z)^4]\right)\exp(-(\xi_s^{\lambda,z}-z)^2)\mathbb{I}_{\{\xi_s^{\lambda,z}<z\}} \mathrm{d} s
		\end{multline*}
		is an $\mathbb{F}^{\xi^{\lambda,z}}$-martingale.
	\end{example}
	\begin{remark}
		Let $h\in \mathcal{D}(A^{\lambda,z})$, the function $(t,y_1,y_2)\longrightarrow Q_th(y_1,y_2)$ is the solution to the following partial differential equation
		\begin{align*}
			\begin{array}{ll}
				\begin{cases} \dfrac{\partial u}{\partial t}(t,y_1,y_2)=0 & y_1=0, y_2\in\mathbb{R} \\
					\dfrac{\partial u}{\partial t}(t,y_1,y_2)=-\lambda y_1\,\dfrac{\partial u}{\partial y_2}(t,y_1,y_2) +  \dfrac{y_1}{2}\,\dfrac{\partial^2 u}{\partial ^2y_2}(t,y_1,y_2) &  y_1=1, z<y_2 \\
					\dfrac{\partial u}{\partial t}(t,y_1,y_2)=\dfrac{y_1}{2}\,\dfrac{\partial^2 u}{\partial ^2y_2}(t,y_1,y_2) &  y_1=1, y_2=z \\
					\dfrac{\partial u}{\partial t}(t,y_1,y_2)=\lambda y_1\,\dfrac{\partial u}{\partial y_2}(t,y_1,y_2) +  \dfrac{y_1}{2}\,\dfrac{\partial^2 u}{\partial ^2y_2}(t,y_1,y_2) &  y_1=1, z>y_2\\
					u(0,y_1,y_2)=h(y_1,y_2)& (y_1,y_2)\in \mathcal{S}.
				\end{cases}
			\end{array}
		\end{align*}
	\end{remark}
	%
	%
	\appendix
	\section{Foundational Results for the Main Theorems}
	\label{Appendix_Foundation}
	This section contains several preliminary results that are essential for establishing the main findings of this paper.
	\begin{lemma}
		Let $t > 0$, $g : (0,+\infty) \longrightarrow \mathbb{R}$ be a Borel function such that
		$\mathbb{E}[g(\sigma_z^{\lambda})]<\infty$. Then, $\mathbb{P}$-a.s., 
		\begin{multline}
			\mathbb{E}[g(\sigma_z^{\lambda})\vert\xi_t^{\lambda,z}]=\lambda\,A^{\lambda}(t,z)\displaystyle\int_0^{t}g(r)p(r,z,\lambda\,r)\mathrm{d}r\,\mathbb{I}_{\{\xi_t^{\lambda,z}=z \}}\\
			+\lambda\,\exp(\lambda\alpha(z,\xi_t^{\lambda,z}))\displaystyle\int_t^{\infty}g(r)p(r-t,z-\xi_t^{\lambda,z},\lambda(r-t))\mathrm{d}r\,\mathbb{I}_{\{\xi_t^{\lambda,z}\neq z \}},\label{eqlawofsigmigivenxit}
		\end{multline}	
		where 
		$$A^{\lambda}(t,z)=[F_{\sigma_z^{\lambda}}(t)]^{-1}=\bigg[\Phi\bigg(\lambda\sqrt{t}-\dfrac{z}{\sqrt{t}}\bigg)-\exp(2\lambda\,z)\,\Phi\bigg(-\lambda\sqrt{t}-\dfrac{z}{\sqrt{t}}\bigg)\bigg]^{-1}.$$
	\end{lemma}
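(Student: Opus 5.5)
We split according to the two events $\{\xi_t^{\lambda,z}=z\}$ and $\{\xi_t^{\lambda,z}\neq z\}$, which by \eqref{eqmodification} coincide a.s. with $\{\sigma_z^{\lambda}\le t\}$ and $\{t<\sigma_z^{\lambda}\}$. Since $\xi_t^{\lambda,z}$ is constant (equal to $z$) on the first event, $\sigma(\xi_t^{\lambda,z})$ restricted to it is trivial. It therefore suffices to verify, for every Borel set $A$, the identity
\[
\mathbb{E}[g(\sigma_z^{\lambda})\mathbb{I}_{\{\xi_t^{\lambda,z}\in A\}}]=\mathbb{E}[\Psi(\xi_t^{\lambda,z})\mathbb{I}_{\{\xi_t^{\lambda,z}\in A\}}],
\]
where $\Psi$ denotes the right-hand side of \eqref{eqlawofsigmigivenxit}. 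We check this separately for $A=\{z\}$ and for $A\subset\mathbb{R}\setminus\{z\}$.

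\emph{Case $A=\{z\}$.} On this event the conditional expectation must be the constant $\mathbb{E}[g(\sigma_z^{\lambda})\mathbb{I}_{\{\sigma_z^{\lambda}\le t\}}]/\mathbb{P}(\sigma_z^{\lambda}\le t)$. By \eqref{eqsigmazlamdalaw}, the numerator is $\lambda\int_0^t g(r)p(r,z,\lambda r)\,\mathrm{d}r$. It remains to show $F_{\sigma_z^{\lambda}}(t)=\Phi(\lambda\sqrt t-z/\sqrt t)-e^{2\lambda z}\Phi(-\lambda\sqrt t-z/\sqrt t)$. This follows either by integrating the density, or by noting that the computation preceding \eqref{eqmodification} gives $\mathbb{P}(t<\sigma_z^{\lambda})=\int e^{-\lambda\alpha(z,y)}p(t,y,\lambda t)\,\mathrm{d}y$, which evaluates to $\Phi(-\lambda\sqrt t+z/\sqrt t)+e^{2\lambda z}\Phi(-\lambda\sqrt t-z/\sqrt t)$ by completing the square on $y>z$. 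This is the same formula as the case $x=0$ of the transition kernel in \eqref{eqsemigroup}.

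\emph{Case $A\subset\mathbb{R}\setminus\{z\}$.} We repeat the disintegration used before \eqref{eqmodification}, now carrying $g$ along. We condition on $\sigma_z^{\lambda}=r$ and use \eqref{eqpredictableformulla}, so that given $\sigma_z^{\lambda}=r>t$ the path up to $r$ is a Brownian (with drift) bridge from $0$ to $z$. This gives
\[
\mathbb{E}[g(\sigma_z^{\lambda})\mathbb{I}_{\{t<\sigma_z^{\lambda},\,B^\lambda_t\in A\}}]=\lambda\int_t^\infty g(r)\int_A \frac{p(t,y,\lambda t)\,p(r-t,z-y,\lambda(r-t))}{p(r,z,\lambda r)}\,\mathrm{d}y\,p(r,z,\lambda r)\,\mathrm{d}r.
\]
The factors $p(r,z,\lambda r)$ cancel, and Fubini yields
\[
\int_A p(t,y,\lambda t)\Big[\lambda\int_t^\infty g(r)p(r-t,z-y,\lambda(r-t))\,\mathrm{d}r\Big]\mathrm{d}y.
\]
Comparing with the density $e^{-\lambda\alpha(z,y)}p(t,y,\lambda t)$ of $\xi_t^{\lambda,z}$ on $\{y\neq z\}$ obtained before \eqref{eqmodification}, the conditional expectation equals $\lambda e^{\lambda\alpha(z,y)}\int_t^\infty g(r)p(r-t,z-y,\lambda(r-t))\,\mathrm{d}r$ at $y=\xi_t^{\lambda,z}$, as claimed.

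The main obstacle is justifying the bridge disintegration: the joint density of $(B^\lambda_t,\sigma_z^{\lambda})$ on $\{t<\sigma_z^{\lambda}\}$ is $\lambda p(t,y,\lambda t)p(r-t,z-y,\lambda(r-t))$. A cleaner route is the Markov property at time $t$. On $\{t<\sigma_z^{\lambda}\}$ we have $\sigma_z^{\lambda}=t+\sigma'$, where $\sigma'$ is the last passage time at $z-B^\lambda_t$ of the post-$t$ process. The law of the last passage time at a level $x$ has density $\lambda p(r,x,\lambda r)$ for $x>0$, and by the same time-reversal argument density $\lambda e^{2\lambda x}p(r,x,\lambda r)$ for $x<0$, with total mass $e^{-\lambda\alpha}$ per the identity $\int_0^\infty p(r,x,\lambda r)\,\mathrm{d}r=e^{-\lambda(|x|-x)}/\lambda$. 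Since $e^{2\lambda x}=e^{-\lambda\alpha(z,y)}$ when $x=z-y<0$, this reproduces the formula above. Integrability of $g(\sigma_z^{\lambda})$ ensures that all applications of Fubini are legitimate.
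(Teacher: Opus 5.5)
Your main argument is correct and is essentially the paper's proof. The paper also disintegrates over $\{\sigma_z^{\lambda}=r\}$ via \eqref{eqpredictableformulla}, writing the conditional law of $\xi_t^{\lambda,z}$ with respect to $\varrho=\delta_z+\mathrm{d}y$ as an atom at $z$ on $\{r\le t\}$ plus the bridge density $p(t,x,\lambda t)\,p(r-t,z-x,\lambda(r-t))/p(r,z,\lambda r)$ on $\{t<r\}$. It then applies Bayes' formula and evaluates $F_{\sigma_z^{\lambda}}(t)$ and $\int_t^{\infty}p(r-t,z-x,\lambda(r-t))\,\mathrm{d}r=e^{-\lambda\alpha(z,x)}/\lambda$, which is exactly your atom/non-atom verification.

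One slip in your optional ``cleaner route'': for a level $x<0$, the defective density of the last passage time is $\lambda p(r,x,\lambda r)=\lambda e^{2\lambda x}p(r,|x|,\lambda r)$. As you wrote it, $\lambda e^{2\lambda x}p(r,x,\lambda r)$ has total mass $e^{4\lambda x}$ rather than $e^{2\lambda x}$, which would break the match with the formula.
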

	\begin{proof}
		From \eqref{eqpredictableformulla}, we have for any $B\in \mathcal{B}(\mathbb{R})$
		\begin{multline*}
			\mathbb{P}(\xi_t^{\lambda,z}\in B\vert\sigma_z^{\lambda}=r)=\displaystyle\int_{B}\bigg(\mathbb{I}_{\{x=z\}}\mathbb{I}_{\{r\leq t \}}+\dfrac{p(r-t,z-x,\lambda(r-t))p(t,x,\lambda\,t)}{p(r,z,\lambda\,r)}\mathbb{I}_{\{x\neq z\}}\mathbb{I}_{\{t<r \}}\bigg)\,\varrho(\mathrm{d}x).
		\end{multline*}
		From Bayes formula (see  \cite{Shi}  p. 272) it may be concluded
		\begin{align*}
			\mathbb{E}[g(\sigma_z^{\lambda})\vert\xi_t^{\lambda,z}]
			=\lambda\,\dfrac{\displaystyle\int_0^tg(r)p(r,z,\lambda\,r)\mathrm{d}r}{F_{\sigma_z^{\lambda}}(t)}\mathbb{I}_{\{\xi_t^{\lambda,z}=z\}}+\dfrac{\displaystyle\int_t^{+\infty}g(r)p(r-t,z-\xi_t^{\lambda,z},\lambda\,(r-t))\mathrm{d}r}{\displaystyle\int_t^{+\infty}p(r-t,z-\xi_t^{\lambda,z},\lambda\,(r-t))\mathrm{d}r}\mathbb{I}_{\{\xi_t^{\lambda,z}\neq z\}}.
		\end{align*}
		From \eqref{eqintformullanum} and \eqref{eqintformulladen}, it follows that 
		$$ F_{\sigma_z^{\lambda}}(t)=\Phi\bigg(\lambda\sqrt{t}-\dfrac{z}{\sqrt{t}}\bigg)-\exp(2\lambda\,z)\,\Phi\bigg(-\lambda\sqrt{t}-\dfrac{z}{\sqrt{t}}\bigg) $$
		and that
		$$ \displaystyle\int_t^{+\infty}p(r-t,z-\xi_t^{\lambda,z},\lambda\,(r-t))\mathrm{d}r= \dfrac{\exp(-\lambda\alpha(z,\xi_t^{\lambda,z}))}{\lambda}.$$
		Which completes the proof.
	\end{proof}
	\begin{lemma}
		Let $0<t<u$, and $h : (0,+\infty)\times\mathbb{R} \longrightarrow \mathbb{R}$ be a Borel function such that
		$\mathbb{E}[h(\sigma_z^{\lambda},\xi_u^{\lambda,z})]<\infty$. Then, $\mathbb{P}$-a.s., 	
		\begin{multline}
			\mathbb{E}[h(\sigma_z^{\lambda},\xi_u^{\lambda,z})\vert\xi_t^{\lambda,z}]=\lambda\,A^{\lambda}(t,z)\displaystyle\int_0^{t}h(r,z)p(r,z,\lambda\,r)\mathrm{d}r\,\mathbb{I}_{\{\xi_t^{\lambda,z}=z \}}\nonumber\\
			+\lambda\,\exp(\lambda\alpha(z,\xi_t^{\lambda,z}))\bigg[\displaystyle\int_t^{u}h(r,z)p(r-t,z-\xi_t^{\lambda,z},\lambda(r-t))\mathrm{d}r+\\
			\displaystyle\int_{\mathbb{R}}\displaystyle\int_{u}^{+\infty}h(r,y)p(r-u,z-y,\lambda\,(r-u))\mathrm{d}r\,p(u-t,y-\xi_t^{\lambda,z},\lambda\,(u-t))\mathrm{d}y\bigg]\mathbb{I}_{\{\xi_t^{\lambda,z}\neq z \}}
		\end{multline}
	\end{lemma}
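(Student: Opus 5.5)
We prove the formula in the same way as the preceding lemma, by conditioning on $\sigma_z^{\lambda}$ and applying the Bayes formula. The first step is to describe the joint conditional law of $(\xi_t^{\lambda,z},\xi_u^{\lambda,z})$ given $\sigma_z^{\lambda}=r$, using \eqref{eqpredictableformulla}. Given $\sigma_z^{\lambda}=r$, the path $(B^\lambda_s)_{s\le r}$ is a Brownian bridge with drift from $0$ to $z$ of length $r$, so there are three cases for the pair of times $t<u$.
\begin{itemize}
\item If $r\le t$, both coordinates equal $z$.
\item If $t<r\le u$, then $\xi_u^{\lambda,z}=z$ and $\xi_t^{\lambda,z}$ has density $p(t,x,\lambda t)p(r-t,z-x,\lambda(r-t))/p(r,z,\lambda r)$.
\item If $r>u$, the pair has joint density
\[
\frac{p(t,x,\lambda t)\,p(u-t,y-x,\lambda(u-t))\,p(r-u,z-y,\lambda(r-u))}{p(r,z,\lambda r)}.
\]
\end{itemize}
Integrating against $\mathbb{P}_{\sigma_z^{\lambda}}(\mathrm{d}r)=\lambda p(r,z,\lambda r)\mathrm{d}r$ from \eqref{eqsigmazlamdalaw} cancels the denominators. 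This gives the joint law of $(\sigma_z^\lambda,\xi_t^{\lambda,z},\xi_u^{\lambda,z})$ with respect to $\mathrm{d}r\otimes\varrho(\mathrm{d}x)\otimes\varrho(\mathrm{d}y)$.

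The second step splits $\mathbb{E}[h(\sigma_z^{\lambda},\xi_u^{\lambda,z})\mathbb{I}_{\{\xi_t^{\lambda,z}\in B\}}]$ for Borel $B$ according to $\{\xi_t^{\lambda,z}=z\}$ and $\{\xi_t^{\lambda,z}\neq z\}$. By \eqref{eqmodification}, these coincide a.s. with $\{\sigma_z^\lambda\le t\}$ and $\{t<\sigma_z^\lambda\}$.

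On $\{\xi_t^{\lambda,z}=z\}$ we have $\sigma_z^\lambda\le t<u$, so $\xi_u^{\lambda,z}=z$. The conditional expectation is then $\mathbb{E}[h(\sigma_z^\lambda,z)\mid \sigma_z^\lambda\le t]$, which equals $\lambda A^\lambda(t,z)\int_0^t h(r,z)p(r,z,\lambda r)\,\mathrm{d}r$, exactly as in the previous lemma applied to $g=h(\cdot,z)$.

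On $\{\xi_t^{\lambda,z}\neq z\}$, the Bayes formula gives a ratio. The numerator is
\[
\int_t^u h(r,z)\,p(r-t,z-x,\lambda(r-t))\,\mathrm{d}r+\int_{\mathbb{R}}\int_u^\infty h(r,y)\,p(r-u,z-y,\lambda(r-u))\,\mathrm{d}r\;p(u-t,y-x,\lambda(u-t))\,\mathrm{d}y,
\]
evaluated at $x=\xi^{\lambda,z}_t$, after the common factor $p(t,x,\lambda t)$ cancels. The denominator is $\int_t^\infty p(r-t,z-x,\lambda(r-t))\,\mathrm{d}r$, which by the identity used before Theorem~\ref{thmMarkov} equals $\exp(-\lambda\alpha(z,x))/\lambda$. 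This produces the prefactor $\lambda\exp(\lambda\alpha(z,\xi_t^{\lambda,z}))$.

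The main obstacle is to justify the three-point joint density in the case $r>u$. This is the Markov property of the bridge: conditioning $B^\lambda$ on $B^\lambda_r=z$ yields finite-dimensional densities given by products of transition densities divided by $p(r,z,\lambda r)$. We must also check that the drift factors cancel. Indeed, $p(s,a,\lambda s)$ contains $\exp(\lambda a-\lambda^2 s/2)$, and these factors telescope along $0\to x\to y\to z$ exactly as for the driftless kernel, so the bridge is drift-free. Finally, Fubini is legitimate because $\mathbb{E}|h(\sigma_z^\lambda,\xi_u^{\lambda,z})|<\infty$. This allows us to identify the integrand against $\mathbb{I}_B(x)\,p(t,x,\lambda t)\,\mathrm{d}x$ as the version of the conditional expectation stated in the lemma.
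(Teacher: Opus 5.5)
Your proposal is correct and follows essentially the paper's route. Both arguments condition on $\sigma_z^{\lambda}=r$, use \eqref{eqpredictableformulla} to replace the conditional law by that of the bridge of $B^{\lambda}$ ending at $z$ at time $r$, and conclude with the Bayes formula and the identity $\int_t^{\infty}p(r-t,z-x,\lambda(r-t))\,\mathrm{d}r=\exp(-\lambda\alpha(z,x))/\lambda$. The only difference is organizational: the paper splits over $\{\sigma_z^{\lambda}\le t\}$, $\{t<\sigma_z^{\lambda}\le u\}$, $\{u<\sigma_z^{\lambda}\}$, treating the first two via the previous lemma and the third by iterated conditioning on $(\xi_t^{\lambda,z},\sigma_z^{\lambda})$, whereas you write the joint law of $(\sigma_z^{\lambda},\xi_t^{\lambda,z},\xi_u^{\lambda,z})$ once and take a single Bayes ratio.
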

	\begin{proof}
		First, we have
		\begin{multline*}
			\mathbb{E}[h(\sigma_z^{\lambda},\xi_u^{\lambda,z})\vert\xi_t^{\lambda,z}]=\mathbb{E}[h(\sigma_z^{\lambda},z)\mathbb{I}_{\{\sigma_z^{\lambda}\leq t\}}\vert\xi_t^{\lambda,z}]+\mathbb{E}[h(\sigma_z^{\lambda},z)\mathbb{I}_{\{t<\sigma_z^{\lambda}\leq u\}}\vert\xi_t^{\lambda,z}]+\mathbb{E}[h(\sigma_z^{\lambda},\xi_u^{\lambda,z})\mathbb{I}_{\{u<\sigma_z^{\lambda}\}}\vert\xi_t^{\lambda,z}]
		\end{multline*}
		The first and second expectations can be computed using the previous lemma. For the third expectation, we proceed as follows:
		\begin{align*}
			&\mathbb{E}[h(\sigma_z^{\lambda},\xi_u^{\lambda,z})\mathbb{I}_{\{u<\sigma_z^{\lambda}\}}\vert\xi_t^{\lambda,z}=x]=\displaystyle\int_{u}^{+\infty}\mathbb{E}[h(\sigma_z^{\lambda},\xi_u^{\lambda,z})\mathbb{I}_{\{u<\sigma_z^{\lambda}\}}\vert\xi_t^{\lambda,z}=x,\sigma_z^{\lambda}=r]\mathbb{P}_{\sigma_z^{\lambda}\vert \xi_t^{\lambda,z}=x }(\mathrm{d}r)\\
			&=\displaystyle\int_{u}^{+\infty}\mathbb{E}[h(r,B_u^{\lambda})\vert B_t^{\lambda}=x,B_r^{\lambda}=z]\mathbb{P}_{\sigma_z^{\lambda}\vert \xi_t^{\lambda,z}=x }(\mathrm{d}r)\\
			&=\displaystyle\int_{u}^{+\infty}\displaystyle\int_{\mathbb{R}}h(r,y)\dfrac{p(r-u,z-y,\lambda\,(r-u))p(u-t,y-x,\lambda\,(u-t))}{p(r-t,z-x,\lambda\,(r-t))}\mathrm{d}y\,\mathbb{P}_{\sigma_z^{\lambda}\vert \xi_t^{\lambda,z}=x }(\mathrm{d}r)\\
			&=\lambda\,\exp(\lambda\alpha(z,x))\\
			&\times\displaystyle\int_{\mathbb{R}}\displaystyle\int_{u}^{+\infty}h(r,y)p(r-u,z-y,\lambda\,(r-u))\mathrm{d}r\,p(u-t,y-x,\lambda\,(u-t))\mathrm{d}y\,\mathbb{I}_{\{x\neq z\}}
		\end{align*}
		This completes the proof.
	\end{proof}
	\begin{remark}
		It follows from \eqref{eqintformullanum}, \eqref{eqintformulladen} and \eqref{eqmodification} that, $\mathbb{P}$-a.s.,
		\begin{multline}
			\mathbb{E}[f(\xi_u^{\lambda,z})\vert\xi_t^{\lambda,z}]=f(z)\,\mathbb{I}_{\{\sigma_z^{\lambda}\leq t \}}+f(z)\\
			\times\bigg[ \Phi\bigg(\lambda\sqrt{u-t}- \dfrac{\vert z-\xi_t^{\lambda,z} \vert}{\sqrt{u-t}}\bigg)-\exp(2\lambda\,\vert z-\xi_t^{\lambda,z} \vert)\Phi\bigg(-\lambda\sqrt{u-t}- \dfrac{\vert z-\xi_t^{\lambda,z} \vert}{\sqrt{u-t}}\bigg)\bigg]\mathbb{I}_{\{t<\sigma_z^{\lambda} \}}\\
			+\exp(\lambda\alpha(z,\xi_t^{\lambda,z}))\displaystyle\int_{\mathbb{R}} f(y)\exp(-\lambda\alpha(z,y))p(u-t,y-\xi_t^{\lambda,z},\lambda\,(u-t))\mathrm{d}y\,\mathbb{I}_{\{t<\sigma_z^{\lambda}\}}.\label{eqlawofxiugivenxit}
		\end{multline}
	\end{remark}
	\begin{corollary}
		Let $t > 0$, $g : (0,+\infty) \longrightarrow \mathbb{R}$ be a Borel function such that
		$\mathbb{E}[g(\sigma_z^{\lambda})]<\infty$. Then, $\mathbb{P}$-a.s., 
		\begin{multline}
			\mathbb{E}[g(\sigma_z^{\lambda})\vert\mathcal{F}_t^{\xi^{\lambda,z}}]=g(\sigma_z^{\lambda})\,\mathbb{I}_{\{\sigma_z^{\lambda}\leq t \}}\\
			+\lambda\,\exp(\lambda\alpha(z,\xi_t^{\lambda,z}))\displaystyle\int_t^{+\infty}g(r)p(r-t,z-\xi_t^{\lambda,z},\lambda(r-t))\mathrm{d}r\,\mathbb{I}_{\{t<\sigma_z^{\lambda} \}}.
		\end{multline}	
	\end{corollary}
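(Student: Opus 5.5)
The argument combines the Markov property of $\xi^{\lambda,z}$ from Theorem \ref{thmMarkov} with the preceding lemma, which computes $\mathbb{E}[g(\sigma_z^{\lambda})\vert\xi_t^{\lambda,z}]$. We split
\[
g(\sigma_z^{\lambda})=g(\sigma_z^{\lambda})\mathbb{I}_{\{\sigma_z^{\lambda}\leq t\}}+g(\sigma_z^{\lambda})\mathbb{I}_{\{t<\sigma_z^{\lambda}\}}
\]
and condition each term on $\mathcal{F}_t^{\xi^{\lambda,z}}$ separately.

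\emph{First term.} By \eqref{eqmodification}, $\{\sigma_z^{\lambda}\leq t\}$ coincides a.s.\ with $\{\xi_t^{\lambda,z}=z\}$. On this event, $\sigma_z^{\lambda}=\inf\{s\leq t:\ \xi^{\lambda,z}_u=z \text{ for all } u\in[s,t]\}$ almost surely. This identity uses that $B^{\lambda}$ is not constant on any interval, together with the fact that $\xi^{\lambda,z}_s=z$ for $s<\sigma_z^{\lambda}$ only on a Lebesgue-null time set. Consequently $g(\sigma_z^{\lambda})\mathbb{I}_{\{\sigma_z^{\lambda}\leq t\}}$ is $\mathcal{F}_t^{\xi^{\lambda,z},c}$-measurable. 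Since conditional expectations are only defined up to null sets, its conditional expectation is the term itself.

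\emph{Second term.} Write $g(\sigma_z^{\lambda})\mathbb{I}_{\{t<\sigma_z^{\lambda}\}}=\tilde g(\sigma_z^{\lambda})$ with $\tilde g=g\,\mathbb{I}_{(t,\infty)}$. Observe that on $\{t<\sigma_z^{\lambda}\}$ the variable $\sigma_z^{\lambda}$ is a.s.\ a measurable functional of the future path $(\xi^{\lambda,z}_u)_{u\geq t}$, namely the first time after which the path stays at $z$. Applying the Markov property at time $t$ (strictly, its extension to path functionals via a monotone class argument on cylinder functionals) gives
\[
\mathbb{E}[\tilde g(\sigma_z^{\lambda})\vert\mathcal{F}_t^{\xi^{\lambda,z}}]=\mathbb{E}[\tilde g(\sigma_z^{\lambda})\vert\xi_t^{\lambda,z}].
\]
We then evaluate the right side with the preceding lemma applied to $\tilde g$. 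On $\{\xi_t^{\lambda,z}=z\}$ the contribution vanishes, because $\tilde g=0$ on $(0,t]$. On $\{\xi_t^{\lambda,z}\neq z\}=\{t<\sigma_z^{\lambda}\}$ it equals
\[
\lambda\exp(\lambda\alpha(z,\xi_t^{\lambda,z}))\int_t^\infty g(r)\,p(r-t,z-\xi_t^{\lambda,z},\lambda(r-t))\,\mathrm{d}r.
\]
Adding the two terms yields the claimed formula.

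The main obstacle is justifying the Markov step for the non-cylindrical functional $\sigma_z^{\lambda}\mathbb{I}_{\{t<\sigma_z^{\lambda}\}}$. The proof of Proposition~3.1 used this step implicitly. One clean route avoids the path-functional argument: check the formula directly against generating sets. It suffices to take $g=\mathbb{I}_{(a,\infty)}$ with $a>t$, since the case $a\le t$ is handled by the measurable part. For such $g$ we have $\{\sigma_z^{\lambda}>a\}=\{\xi_a^{\lambda,z}\neq z\}$ a.s., so
\[
\mathbb{E}[\mathbb{I}_{\{\sigma_z^{\lambda}>a\}}\vert\mathcal{F}_t^{\xi^{\lambda,z}}]=\mathbb{P}(\xi_a^{\lambda,z}\neq z\vert\xi_t^{\lambda,z}),
\]
which follows by the ordinary Markov property and is given by \eqref{eqsemigroup}. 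This matches the integral by \eqref{eqintformullanum}. The general case then follows by a monotone class argument and linearity.
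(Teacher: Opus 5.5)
Your proposal is correct and follows the argument the paper leaves implicit. The paper states this corollary without proof, as a direct consequence of the preceding lemma and the Markov property of Theorem~\ref{thmMarkov}. It also relies on the measurability facts already used in the proof of the proposition opening Section~\ref{Sectionsemimartingale}: $\sigma_z^{\lambda}\mathbb{I}_{\{\sigma_z^{\lambda}\le t\}}$ is $\mathcal{F}_t^{\xi^{\lambda,z},c}$-measurable, and $\sigma_z^{\lambda}\mathbb{I}_{\{t<\sigma_z^{\lambda}\}}$ is measurable with respect to the completed future $\sigma$-field. That is exactly your split. Your extra check on $g=\mathbb{I}_{(a,\infty)}$, using $\{\sigma_z^{\lambda}>a\}=\{\xi_a^{\lambda,z}\neq z\}$ a.s. with \eqref{eqsemigroup}, \eqref{eqintformullanum} and a monotone class argument, is a clean way to make the Markov step rigorous without invoking path functionals.
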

	\begin{remark}
		For any positive reel number $t$, we have
		\begin{align*}
			\mathbb{E}[\sigma_z^{\lambda}\vert\xi_t^{\lambda,z}]=\bigg[\dfrac{1}{\lambda^2}+\dfrac{z}{\lambda}A^{\lambda}(t,z)B^{\lambda}(t,z)-2\dfrac{t}{\lambda}\,p(t,z,\lambda\,t)A^{\lambda}(t,z)\bigg]\mathbb{I}_{\{\sigma_z^{\lambda}\leq t\}}+
			\bigg[\dfrac{1}{\lambda^2}+\dfrac{\vert z-\xi^{\lambda,z}_t \vert}{\lambda}+t\bigg]\mathbb{I}_{\{t<\sigma_z^{\lambda}\}}
		\end{align*}
		where,
		\begin{equation}
			B^{\lambda}(t,z)=\Phi\bigg(\lambda\sqrt{t}-\dfrac{z}{\sqrt{t}}\bigg)+\exp(2\lambda\,z)\,\Phi\bigg(-\lambda\sqrt{t}-\dfrac{z}{\sqrt{t}}\bigg).
		\end{equation}
		The expected value of $\sigma_z^{\lambda}$ with respect to $\mathcal{F}_t^{\xi^{\lambda,z}}$ is given by
		\begin{align}
			\mathbb{E}[\sigma_z^{\lambda}\vert\mathcal{F}_t^{\xi^{\lambda,z}}]&=\sigma_z^{\lambda}\,\mathbb{I}_{\{\sigma_z^{\lambda}\leq t \}}+\bigg[\dfrac{1}{\lambda^2}+\dfrac{\vert z-\xi^{\lambda,z}_t \vert}{\lambda}+t\bigg]\mathbb{I}_{\{t<\sigma_z^{\lambda}\}}.
		\end{align}	
	\end{remark}
	\section{Proof of Theorem \ref{thmMarkov} and Corollary \ref{corfiltrationrightcontinuity}}
	\label{Appendix_section_Proof}
	\subsection{Proof of Theorem \ref{thmMarkov}}
	The Markov property of the process $\xi^{\lambda,z}$ can be demonstrated by observing that, given $\sigma_z^{\lambda}=r$, the process $\xi^{\lambda,z}$ is a Gaussian process with certain independence properties. However, in the following proof, we establish the Markov property by using a change of measure.
	
	Since $\zeta_0=0$, it suffices to prove that for all $n\in \mathbb{N}$, for every bounded measurable function $g$ defined on $\mathbb{R}$, and 
	$0 < t_1 < t_2 <\ldots < t_n < u$, we have $\mathbb{P}$-a.s.,
	\begin{equation}
		\mathbb{E}[g(\xi^{\lambda,z}_u)\vert\xi^{\lambda,z}_{t_1},\ldots,\xi^{\lambda,z}_{t_n}]=\mathbb{E}[g(\xi^{\lambda,z}_u)\vert\xi^{\lambda,z}_{t_n}].
	\end{equation}
	Since for all $t>0$, $\mathbb{I}_{\{t<\sigma_z^{\lambda}\}}=\mathbb{I}_{\{\xi^{\lambda,z}_t=z\}}$, $\mathbb{P}$-a.s. it follows that
	\begin{align*}
		\mathbb{E}[g(\xi^{\lambda,z}_u)\mathbb{I}_{\{\sigma_z^{\lambda}\leq t_n\}}\vert\xi^{\lambda,z}_{t_1},\ldots,\xi^{\lambda,z}_{t_n}]&=\mathbb{E}[g(\xi^{\lambda,z}_{t_n})\mathbb{I}_{\{\sigma_z^{\lambda}\leq t_n\}}\vert\xi^{\lambda,z}_{t_1},\ldots,\xi^{\lambda,z}_{t_n}]\\
		&=\mathbb{E}[g(\xi^{\lambda,z}_u)\mathbb{I}_{\{\sigma_z^{\lambda}\leq t_n\}}\vert\xi^{\lambda,z}_{t_n}].
	\end{align*}
	Hence, it remains to show that
	\begin{equation*}
		\mathbb{E}[g(\xi^{\lambda,z}_u)\mathbb{I}_{\{t_n<\sigma_z^{\lambda}\}}\vert\xi^{\lambda,z}_{t_1},\ldots,\xi^{\lambda,z}_{t_n}]=\mathbb{E}[g(\xi^{\lambda,z}_u)\mathbb{I}_{\{t_n<\sigma_z^{\lambda}\}}\vert\xi^{\lambda,z}_{t_n}].
	\end{equation*}
	Thus, it suffices to verify that for every bounded measurable function $L$ on $\mathbb{R}^n$, we have
	\begin{equation}
		\mathbb{E}[g(\xi^{\lambda,z}_u)\mathbb{I}_{\{t_n<\sigma_z^{\lambda}\}}L(\xi^{\lambda,z}_{t_1},\ldots,\xi^{\lambda,z}_{t_n})]=\mathbb{E}[\mathbb{E}[g(\xi^{\lambda,z}_u)\vert\xi^{\lambda,z}_{t_n}]\mathbb{I}_{\{t_n<\sigma_z^{\lambda}\}}L(\xi^{\lambda,z}_{t_1},\ldots,\xi^{\lambda,z}_{t_n})].\label{eqMart<tau}
	\end{equation}
	We see that
	\begin{align*}
		\mathbb{E}[g(\xi^{\lambda,z}_u)\mathbb{I}_{\{t_n<\sigma_z^{\lambda}\leq u\}}L(\xi^{\lambda,z}_{t_1},&\ldots,\xi^{\lambda,z}_{t_n})]=g(z)\mathbb{E}[\mathbb{I}_{\{t_n<\sigma_z^{\lambda}\leq u\}}L(B^{\lambda}_{t_1},\ldots,B^{\lambda}_{t_n})]\\
		&=g(z)\displaystyle\int_{t_n}^{u}\mathbb{E}[L(B^{\lambda}_{t_1},\ldots,B^{\lambda}_{t_n})\vert \sigma_z^{\lambda}=r]\mathbb{P}_{\sigma_z^{\lambda}}(\mathrm{d}r)\\
		&=g(z)\displaystyle\int_{t_n}^{u}\mathbb{E}[L(B^{\lambda}_{t_1},\ldots,B^{\lambda}_{t_n})\vert B^{\lambda}_{r}=z]\mathbb{P}_{\sigma_z^{\lambda}}(\mathrm{d}r).
	\end{align*}
	With the notation,
	\begin{equation}
		U^{\lambda,z}(r,t,x)=\dfrac{p(r-t,z-x,\lambda(r-t))}{p(r,z,\lambda\,r)},\,\,t<r,\,\, x\in\mathbb{R}.
	\end{equation}
	The change of probability, Fubini, and \eqref{eqlawofsigmigivenxit} yield
	\begin{align*}
		&\mathbb{E}[g(\xi^{\lambda,z}_u)\mathbb{I}_{\{t_n<\sigma_z^{\lambda}\leq u\}}L(\xi^{\lambda,z}_{t_1},\ldots,\xi^{\lambda,z}_{t_n})]\\
		&=g(z)\displaystyle\int_{t_n}^{u}\mathbb{E}\left[L(B^{\lambda}_{t_1},\ldots,B^{\lambda}_{t_n})\dfrac{p(r-t_n,z-B^{\lambda}_{t_n},\lambda(r-t_n))}{p(r,z,\lambda\,r)}\right]\mathbb{P}_{\sigma_z^{\lambda}}(\mathrm{d}r)\nonumber\\
		&=\lambda\, g(z)\mathbb{E}\left[L(B^{\lambda}_{t_1},\ldots,B^{\lambda}_{t_n})\displaystyle\int_{t_n}^{u}p(r-t_n,z-B^{\lambda}_{t_n},\lambda(r-t_n))\mathrm{d}r\right]\\
		&=\displaystyle\int_{t_n}^{+\infty} \mathbb{E}\left[L(B^{\lambda}_{t_1},\ldots,B^{\lambda}_{t_n})\mathbb{E}[g(z)\mathbb{I}_{\{t_n<\sigma_z^{\lambda}\leq u\}}\vert\xi^{\lambda,z}_{t_n}=x ]_{x=B^{\lambda}_{t_n}}U^{\lambda,z}(r,t_n,B^{\lambda}_{t_n})\right]\mathbb{P}_{\sigma_z^{\lambda}}(\mathrm{d}r)\\
		&=\displaystyle\int_{t_n}^{+\infty} \mathbb{E}\left[L(B^{\lambda}_{t_1},\ldots,B^{\lambda}_{t_n})\mathbb{E}[g(z)\mathbb{I}_{\{t_n<\sigma_z^{\lambda}\leq u\}}\vert\xi^{\lambda,z}_{t_n}=x ]_{x=B^{\lambda}_{t_n}}\vert B^{\lambda}_{r}=z\right]\mathbb{P}_{\sigma_z^{\lambda}}(\mathrm{d}r)\\
		&=\displaystyle\int_{t_n}^{+\infty} \mathbb{E}\left[L(B^{\lambda}_{t_1},\ldots,B^{\lambda}_{t_n})\mathbb{E}[g(z)\mathbb{I}_{\{t_n<\sigma_z^{\lambda}\leq u\}}\vert\xi^{\lambda,z}_{t_n}=x ]_{x=B^{\lambda}_{t_n}}\vert \sigma_z^{\lambda}=r\right]\mathbb{P}_{\sigma_z^{\lambda}}(\mathrm{d}r)\\
		&=\mathbb{E}[\mathbb{E}[g(\xi^{\lambda,z}_u)\mathbb{I}_{\{t_n<\sigma_z^{\lambda}\leq u\}}\vert \xi^{\lambda,z}_{t_n}]L(\xi^{\lambda,z}_{t_1},\ldots,\xi^{\lambda,z}_{t_n})].
	\end{align*}
	Similarly, The change of probability, Fubini, and \eqref{eqlawofxiugivenxit} yield
	\begin{align*}
		\mathbb{E}[g(\xi^{\lambda,z}_u)\mathbb{I}_{\{u<\sigma_z^{\lambda}\}}L(\xi^{\lambda,z}_{t_1},\ldots,\xi^{\lambda,z}_{t_n})]=\mathbb{E}[\mathbb{E}[g(\xi^{\lambda,z}_u)\mathbb{I}_{\{u<\sigma_z^{\lambda}\}}\vert \xi^{\lambda,z}_{t_n}]L(\xi^{\lambda,z}_{t_1},\ldots,\xi^{\lambda,z}_{t_n})].
	\end{align*}
	\subsection{Proof of Corollary \ref{corfiltrationrightcontinuity}}
	We recall that if $X$ is an $\mathbb{F}^{X}_+$-Markov process, then $\mathbb{F}^{X}$ is right-continuous (see, \cite[Ch. I, (8.12)]{BG}). Let $f$ be a bounded continuous function. For all $0\leq t<u$. We have, $\mathbb{P}$-a.s.,
	\begin{equation*}
		\mathbb{E}\left[f(\xi_u^{\lambda,z})\Big\vert \mathcal{F}^{\xi^{\lambda,z}}_{t+}\right]=\mathbb{E}\left[f(\xi_u^{\lambda,z})\bigg\vert \bigcap\limits_{n}\mathcal{F}^{\xi^{\lambda,z}}_{t_n}\right]
		=\lim\limits_{n\rightarrow +\infty}\mathbb{E}\left[f(\xi_u^{\lambda,z})\Big\vert \xi^{\sigma_z^{\lambda}}_{t_n}\right]
	\end{equation*}
	where, $(t_n)_{n\in \mathbb{N}}$ is a decreasing sequence of strictly positive real numbers converging to $t$: that is $0 \leq t <...< t_{n+1} < t_{n}<...<t_{1} < u $, $t_{n} \searrow t$ as $n \mapsto +\infty $. It follows from \eqref{eqmodification} and \eqref{eqsemigroup} that, $\mathbb{P}$-a.s.,
	\begin{align*}
		\mathbb{E}[f(\xi_u^{\lambda,z})\vert\xi_{t_n}^{\lambda,z}]=f(z)\,\mathbb{I}_{\{\sigma_z^{\lambda}\leq t_n \}}&+f(z)\bigg[ \Phi\bigg(\lambda\sqrt{u-t_n}- \dfrac{\vert z-\xi_{t_n}^{\lambda,z} \vert}{\sqrt{u-t_n}}\bigg)\\-\exp(2\lambda\,\vert z-\xi_{t_n}^{\lambda,z} \vert)&\Phi\bigg(-\lambda\sqrt{u-t_n}- \dfrac{\vert z-\xi_{t_n}^{\lambda,z} \vert}{\sqrt{u-t_n}}\bigg)\bigg]\mathbb{I}_{\{t_n<\sigma_z^{\lambda} \}}\nonumber\\
		+\exp(\lambda\alpha(z,\xi_{t_n}^{\lambda,z}))\displaystyle\int_{\mathbb{R}} f(y)&\exp(-\lambda\alpha(z,y))p(u-t_n,y-\xi_{t_n}^{\lambda,z},\lambda\,(u-t_n))\mathrm{d}y\,\mathbb{I}_{\{t_n<\sigma_z^{\lambda}\}}.
	\end{align*}
	Using the fact that $y\longrightarrow f(y)\exp(-\lambda\,\alpha(z,y) )$ is bounded and continuous, the weak convergence of the Gaussian measures, the continuity of $\Phi$ and the process $\xi^{\sigma_z^{\lambda}}$ and the right continuity of the indicator functions in the above equation, we obtain, $\mathbb{P}$-a.s.,
	\begin{equation*}
		\mathbb{E}\left[f(\xi_u^{\lambda,z})\Big\vert \mathcal{F}^{\xi^{\lambda,z}}_{t+}\right]=\mathbb{E}\left[f(\xi_u^{\lambda,z})\Big\vert \xi^{\lambda,z}_{t}\right].
	\end{equation*}
	Hence, $\xi^{\lambda,z}$ is an $\mathbb{F}^{\xi^{\lambda,z}}_+$-Markov process. Thus, we have the desired result.
	\section{Integral Formulas}
	\label{Appendix_integrals}
	In this section, we present the detailed computations of the integrals used in the proofs of the main results. By applying the following known integral (cf. \cite{AS}, p. 304): For $a$, $b\in\mathbb{R}$
	\begin{multline}
		\displaystyle\int \exp\bigg( -a^2r^2-\dfrac{b^2}{r^2} \bigg)\mathrm{d}r=\dfrac{\sqrt{\pi}}{2\sqrt{a^2}}\bigg[\exp(2\sqrt{a^2}\sqrt{b^2})\,\Phi\bigg(\sqrt{2a^2}\,r+\dfrac{\sqrt{2b^2}}{r} \bigg) \\
		+\exp(-2\sqrt{a^2}\sqrt{b^2})\,\Phi\bigg(\sqrt{2a^2}\,r-\dfrac{\sqrt{2b^2}}{r}  \bigg)\bigg]+c,\label{eqprimitive}
	\end{multline}
	where $c$ is a real constant, we derive the following integrals. For any $0\leq s<t$, we have:
	\begin{align}
		\displaystyle\int_{s}^{t}p(r-s,z&-\xi^{\lambda,z}_s,\lambda(r-s))\mathrm{d}r
		=\sqrt{\dfrac{2}{\pi}}\exp(\lambda(z-\xi^{\lambda,z}_s)\displaystyle\int_{0}^{\sqrt{t-s}} \exp\bigg(-\dfrac{1}{2}\lambda^2r^2-\dfrac{1}{2}\dfrac{(z-\xi^{\lambda,z}_s)^2}{r^2}\bigg)\mathrm{d}r\nonumber\\
		&=\dfrac{1}{\lambda}\exp(\lambda(z-\xi^{\lambda,z}_s))\bigg[ \exp(-\lambda\vert z-\xi^{\lambda,z}_s\vert)\Phi\bigg(\lambda\sqrt{t-s}- \dfrac{\vert z-\xi^{\lambda,z}_s \vert}{\sqrt{t-s}}\bigg)\nonumber\\
		&-\exp(\lambda\vert z-\xi^{\lambda,z}_s\vert)\Phi\bigg(-\lambda\sqrt{t-s}- \dfrac{\vert z-\xi^{\lambda,z}_s \vert}{\sqrt{t-s}}\bigg) \bigg]\nonumber\\
		&=\dfrac{1}{\lambda}\bigg[ \exp(-\lambda\alpha(z,\xi^{\lambda,z}_s))\Phi\bigg(\lambda\sqrt{t-s}- \dfrac{\vert z-\xi^{\lambda,z}_s \vert}{\sqrt{t-s}}\bigg)\nonumber\\
		&-\exp(\lambda\gamma( z,\xi^{\lambda,z}_s))\Phi\bigg(-\lambda\sqrt{t-s}- \dfrac{\vert z-\xi^{\lambda,z}_s \vert}{\sqrt{t-s}}\bigg) \bigg],\label{eqintformullanum}
	\end{align}
	and
	\begin{align}
		\displaystyle\int_{s}^{+\infty}p(r-s,z-\xi^{\lambda,z}_s,\lambda(r-s))\mathrm{d}r&=\sqrt{\dfrac{2}{\pi}}\exp(\lambda(z-\xi^{\lambda,z}_s)\displaystyle\int_{0}^{+\infty} \exp\bigg(-\dfrac{1}{2}\lambda^2r^2-\dfrac{1}{2}\dfrac{(z-\xi^{\lambda,z}_s)^2}{r^2}\bigg)\mathrm{d}r\nonumber\\
		&=\dfrac{1}{\lambda}\exp(\lambda(z-\xi^{\lambda,z}_s))\exp(-\lambda\vert z-\xi^{\lambda,z}_s\vert)\nonumber\\
		&=\dfrac{1}{\lambda}\exp(-\lambda \alpha(z,\xi^{\lambda,z}_s)).\label{eqintformulladen}
	\end{align}
	Using the fact that the primitive function of $x^2\exp\bigg( -\dfrac{1}{2}a^2x^2-\dfrac{1}{2}\dfrac{b^2}{x^2} \bigg)$, for $a> 0$ and $b \in \mathbb{R}$, is
	\begin{align*}
		\displaystyle\int x^2\,\exp\bigg( -\dfrac{1}{2}a^2x^2-\dfrac{1}{2}\dfrac{b^2}{x^2} \bigg)\mathrm{d}x&=\sqrt{\dfrac{\pi}{2}}\dfrac{1}{a^3}\bigg[\exp(\sqrt{a^2}\sqrt{b^2})\,\Phi\bigg(\sqrt{a^2}\,x+\dfrac{\sqrt{b^2}}{x}  \bigg)\nonumber\\
		&-\exp(-\sqrt{a^2}\sqrt{b^2})\,\Phi\bigg(-\sqrt{a^2}\,x+\dfrac{\sqrt{b^2}}{x}  \bigg)\bigg]\\
		&-\sqrt{\dfrac{\pi}{2}}\dfrac{\sqrt{b^2}}{a^2}\bigg[\exp(\sqrt{a^2}\sqrt{b^2})\,\Phi\bigg(\sqrt{a^2}\,x+\dfrac{\sqrt{b^2}}{x}  \bigg)\\
		&+\exp(-\sqrt{a^2}\sqrt{b^2})\,\Phi\bigg(-\sqrt{a^2}\,x+\dfrac{\sqrt{b^2}}{x}  \bigg)\bigg]\\
		&-\dfrac{x}{a^2}\exp\bigg( -\dfrac{1}{2}a^2x^2-\dfrac{1}{2}\dfrac{b^2}{x^2} \bigg)+c,
	\end{align*}
	$c\in \mathbb{R}$, we derive the following formula
	\begin{align}
		\displaystyle\int_s^{t}(r&-s)\,p(r-s,z-\xi^{\lambda,z}_s,\lambda(r-s))\mathrm{d}r\nonumber\\
		&=\sqrt{\dfrac{2}{\pi}}\exp(\lambda(z-\xi^{\lambda,z}_s)\displaystyle\int_{0}^{\sqrt{t-s}}r^2 \exp\bigg(-\dfrac{1}{2}\lambda^2r^2-\dfrac{1}{2}\dfrac{(z-\xi^{\lambda,z}_s)^2}{r^2}\bigg)\mathrm{d}r\nonumber\\
		&=\dfrac{1}{\lambda^3}\exp(-\lambda\alpha(z,\xi^{\lambda,z}_s))\Phi\bigg(\lambda\sqrt{t-s}- \dfrac{\vert z-\xi^{\lambda,z}_s \vert}{\sqrt{t-s}}\bigg)\nonumber\\
		&-\dfrac{1}{\lambda^3}\exp(\lambda\gamma(z,\xi^{\lambda,z}_s))\Phi\bigg(-\lambda\sqrt{t-s}- \dfrac{\vert z-\xi^{\lambda,z}_s \vert}{\sqrt{t-s}}\bigg)\nonumber\\
		&+\dfrac{\vert z-\xi^{\lambda,z}_s \vert}{\lambda^2}\exp(-\lambda\alpha(z,\xi^{\lambda,z}_s))\Phi\bigg(\lambda\sqrt{t-s}- \dfrac{\vert z-\xi^{\lambda,z}_s \vert}{\sqrt{t-s}}\bigg)\nonumber\\
		&+\dfrac{\vert z-\xi^{\lambda,z}_s \vert}{\lambda^2}\exp(\lambda\gamma(z,\xi^{\lambda,z}_s))\Phi\bigg(-\lambda\sqrt{t-s}- \dfrac{\vert z-\xi^{\lambda,z}_s \vert}{\sqrt{t-s}}\bigg)\nonumber\\
		&-\dfrac{2}{\lambda^2}(t-s)p(t-s,z-\xi^{\lambda,z}_s,\lambda\,(t-s)).\label{eqintformullanum(r-s)}
	\end{align}
	Using the fact that the primitive function of $\Phi\left(a \sqrt{x}+\frac{b}{\sqrt{x}}\right)$, for $a, b \in \mathbb{R}$, is
	\begin{align*}
		\int \Phi\left(a \sqrt{x}+\frac{b}{\sqrt{x}}\right)& \mathrm{d} x=\frac{1}{2 a \sqrt{a^2}} e^{-\sqrt{a^2} \sqrt{b^2}-a b}
		\left(-\sqrt{a^2} \sqrt{b^2}+a b-1\right) \Phi\left(\sqrt{a^2} \sqrt{x}-\frac{\sqrt{b^2}}{\sqrt{x}}\right)\\
		&+\frac{1}{2 a \sqrt{a^2}} e^{\sqrt{a^2} \sqrt{b^2}-a b}\left(\sqrt{a^2} \sqrt{b^2}+a b-1\right)\left( \Phi\left(\sqrt{a^2} \sqrt{x}+\frac{\sqrt{b^2}}{\sqrt{x}}\right)-1\right)\\
		&+\frac{\sqrt{b^2}}{2 a} e^{-\sqrt{a^2} \sqrt{b^2}-a b}+x \Phi\left(a \sqrt{x}+\frac{b}{\sqrt{x}}\right)+\frac{ \sqrt{x} e^{-\frac{1}{2}(a \sqrt{x}+\frac{b}{\sqrt{x}})^2}}{\sqrt{2\pi}a}+c,
	\end{align*}
	$c\in \mathbb{R}$, we obtain that, for all $s, t>0$
	\begin{align}
		\displaystyle\int_{0}^{t}\Phi\bigg(\lambda\sqrt{u}&+ \dfrac{ z-\xi^{\lambda,z}_{s} }{\sqrt{u}}\bigg)\mathrm{d}u
		=\frac{1}{2 \lambda^2} \exp(-\lambda\gamma(z,\xi^{\lambda,z}_s))
		\left(-\lambda\alpha(z,\xi^{\lambda,z}_s)-1\right) \Phi\left(\lambda \sqrt{t}-\frac{\vert z-\xi^{\lambda,z}_s\vert}{\sqrt{t}}\right)\nonumber\\
		&+\frac{1}{2 \lambda^2} \exp(\lambda\alpha(z,\xi^{\lambda,z}_s))\left(\lambda\gamma(z,\xi^{\lambda,z}_s)-1\right)
		\left( \Phi\left(\lambda \sqrt{t}+\frac{\vert z-\xi^{\lambda,z}_s\vert}{\sqrt{t}}\right)-1\right) \nonumber\\
		&+t\, \Phi\left(\lambda \sqrt{t}+\frac{ z-\xi^{\lambda,z}_s}{\sqrt{t}}\right)
		+\dfrac{\sqrt{t}}{\lambda}\dfrac{1}{\sqrt{2\pi}}\exp\bigg(-\frac{1}{2}\Big(\lambda \sqrt{t}+\dfrac{z-\xi^{\lambda,z}_s}{\sqrt{t}}\Big)^2\bigg),\label{eqappendixint1}
	\end{align}
	and that
	\begin{align}
		\displaystyle\int_{0}^{t}\Phi\bigg(\lambda\sqrt{u}&- \dfrac{ z-\xi^{\lambda,z}_{s} }{\sqrt{u}}\bigg)\mathrm{d}u=\frac{1}{2 \lambda^2} \exp(-\lambda\alpha(z,\xi^{\lambda,z}_s))
		\left(-\lambda\gamma(z,\xi^{\lambda,z}_s)-1\right) \Phi\left(\lambda \sqrt{t}-\frac{\vert z-\xi^{\lambda,z}_s\vert}{\sqrt{t}}\right)\nonumber\\
		&+\frac{1}{2 \lambda^2} \exp(\lambda\gamma(z,\xi^{\lambda,z}_s))\left(\lambda\alpha(z,\xi^{\lambda,z}_s)-1\right)
		\left( \Phi\left(\lambda \sqrt{t}+\frac{\vert z-\xi^{\lambda,z}_s\vert}{\sqrt{t}}\right)-1\right)\nonumber \\
		&+t\, \Phi\left(\lambda \sqrt{t}-\frac{ z-\xi^{\lambda,z}_s}{\sqrt{t}}\right)
		+\dfrac{\sqrt{t}}{\lambda}\dfrac{1}{\sqrt{2\pi}}\exp\bigg(-\frac{1}{2}\Big(\lambda \sqrt{t}-\dfrac{z-\xi^{\lambda,z}_s}{\sqrt{t}}\Big)^2\bigg).\label{eqappendixint2}
	\end{align}
	\section{Predictable Times and Processes}
	\label{Appendix_Kallenberg}
	In this section, for the sake of easy reference, we provide a brief overview of the results from \cite{K} that characterize stopping times by utilizing Martingales, Compensators, and Feller processes. These results serve as a key tool for classifying stopping times based on the properties of the associated stochastic processes.
	\begin{theorem}
		For any filtration $\mathcal{F}$, these conditions are equivalent:
		\begin{enumerate}
			\item[(i)] Every accessible time is predictable;
			\item[(ii)] $\mathcal{F}_{\tau-}=\mathcal{F}_\tau$ on $\{\tau<\infty\}$ for every predictable time $\tau$;
			\item[(iii)] $\Delta M_\tau=0$ a.s. on $\{\tau<\infty\}$ for every martingale $M$ and predictable time $\tau$.
		\end{enumerate} 
	\end{theorem}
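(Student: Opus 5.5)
The cycle of implications (i)$\Rightarrow$(iii)$\Rightarrow$(ii)$\Rightarrow$(i) will be proved. Throughout, the general theory of stopping times is used freely: predictable times are announceable, accessible times admit a countable covering by graphs of predictable times, and predictable sections are available. Only (iii) involves martingales explicitly, so it serves as the pivot.

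\emph{(i)$\Rightarrow$(iii).} Fix a martingale $M$, taken c\`adl\`ag, and a predictable time $\tau$. Put $A=\{\Delta M_\tau\neq 0,\ \tau<\infty\}\in\mathcal{F}_\tau$ and let $\tau_A$ be the restriction of $\tau$ to $A$. Since $\tau_A$ is a restriction of a predictable time, its graph is covered by that of $\tau$, so $\tau_A$ is accessible and hence predictable by (i). Next pick $B_n=\{\Delta M_\tau>1/n\}$ or $\{\Delta M_\tau<-1/n\}$; restrictions of $\tau$ to these sets are predictable by the same argument. The key tool is the predictable stopping lemma: for a predictable time $\sigma$ and a uniformly integrable martingale $M$, $\mathbb{E}[\Delta M_\sigma\mid\mathcal{F}_{\sigma-}]=0$. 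Localizing $M$ and applying this with $\sigma=\tau_{B_n}$ gives $\mathbb{E}[\Delta M_{\tau}\mathbb{I}_{B_n}]=0$. Since $\Delta M_\tau>1/n$ on $B_n$ (respectively $<-1/n$), this forces $\mathbb{P}(B_n)=0$, and therefore $\Delta M_\tau=0$ a.s.

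\emph{(iii)$\Rightarrow$(ii).} Let $\tau$ be predictable and $\xi$ a bounded $\mathcal{F}_\tau$-measurable random variable. Take $M_t=\mathbb{E}[\xi\mid\mathcal{F}_t]$ in its c\`adl\`ag version. Then $M_\tau=\xi$, while $M_{\tau-}=\mathbb{E}[\xi\mid\mathcal{F}_{\tau-}]$ by martingale convergence along an announcing sequence. Condition (iii) gives $M_\tau=M_{\tau-}$ on $\{\tau<\infty\}$, so $\xi$ is $\mathcal{F}_{\tau-}$-measurable there. Hence $\mathcal{F}_\tau=\mathcal{F}_{\tau-}$ on $\{\tau<\infty\}$.

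\emph{(ii)$\Rightarrow$(i).} Let $T$ be accessible, with graph covered by $\bigcup_n[\![\tau_n]\!]$ for predictable times $\tau_n$. Set $A_n=\{T=\tau_n<\infty\}$. Because $A_n\in\mathcal{F}_{\tau_n}$, (ii) places it in $\mathcal{F}_{\tau_n-}$, so the restriction $(\tau_n)_{A_n}$ is predictable; this is the standard fact that restricting a predictable time to a set in $\mathcal{F}_{\tau-}$ preserves predictability. Now $T=\inf_n(\tau_n)_{A_n}$ a.s., and the graph $[\![T]\!]=\bigcup_n[\![(\tau_n)_{A_n}]\!]$ is a countable union of predictable graphs, hence a predictable set. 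A stopping time with predictable graph is predictable, by the section theorem or the characterization in \cite{K}.

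The main obstacle is the predictable stopping lemma used in (i)$\Rightarrow$(iii); the remaining steps are structural. The plan for it is to use an announcing sequence $\sigma_k\uparrow\sigma$. Optional sampling gives $\mathbb{E}[M_\sigma\mid\mathcal{F}_{\sigma_k}]=M_{\sigma_k}$, and letting $k\to\infty$ yields $\mathbb{E}[M_\sigma\mid\mathcal{F}_{\sigma-}]=M_{\sigma-}$ via $\bigvee_k\mathcal{F}_{\sigma_k}=\mathcal{F}_{\sigma-}$. Care is needed on $\{\sigma=0\}$ and $\{\sigma=\infty\}$.
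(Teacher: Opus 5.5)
\textbf{Verdict: your proof is correct.} The paper gives no argument of its own here; it only refers to \cite[Proposition 25.19]{K}. So the comparison is with the standard treatment, not with a proof in the paper.

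\textbf{What your route uses.} Your cycle (i)$\Rightarrow$(iii)$\Rightarrow$(ii)$\Rightarrow$(i) is self-contained and never uses compensators.
The only martingale input is the predictable stopping identity $\mathbb{E}[\Delta M_\sigma\mid\mathcal{F}_{\sigma-}]=0$. You use it directly in (i)$\Rightarrow$(iii), and in the form $M_{\tau-}=\mathbb{E}[\xi\mid\mathcal{F}_{\tau-}]$ in (iii)$\Rightarrow$(ii).
The structural inputs are two facts: restricting a predictable $\tau$ to a set in $\mathcal{F}_{\tau-}$ keeps it predictable, and a stopping time with predictable graph is predictable.
For the second fact there is a cleaner phrasing. $T$ is a.s.\ attained by one of the $(\tau_n)_{A_n}$, and all of these are $\ge T$. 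Hence $\{(\omega,t):T(\omega)\le t\}=\bigcup_n\{(\omega,t):(\tau_n)_{A_n}(\omega)\le t\}$ up to evanescence. This set is predictable, so you only need the characterization of predictable times through it.

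\textbf{Alternative closing step.} A route closer to the tools the paper actually relies on (Theorem~\ref{thmappendixcompensator}) closes the cycle with compensators instead.
For accessible $T$, the compensator $A$ is purely discontinuous, with jumps exhausted by predictable times $\sigma_n$ with disjoint graphs. Applying (iii) to $\mathbb{I}_{\{T\le t\}}-A_t$ gives $\Delta A_{\sigma_n}=\mathbb{I}_{\{T=\sigma_n\}}$, hence $A_t\le\mathbb{I}_{\{T\le t\}}$. Then $\mathbb{E}[A_\infty]=\mathbb{P}(T<\infty)$ forces $A=\mathbb{I}_{\{T\le\cdot\}}$, so $T$ is predictable.
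That version needs Doob--Meyer but no graph criterion; yours needs the graph criterion but no Doob--Meyer.

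\textbf{Two minor points.}
In (i)$\Rightarrow$(iii), the set $A$ is superfluous. The localization should be by constant times $m$, which yields $\mathbb{E}[\Delta M_\tau;B_n\cap\{\tau\le m\}]=0$ for every $m$.
You also tacitly use the usual conditions: c\`adl\`ag versions, and null sets in $\mathcal{F}_{\tau-}$ so that $\xi=\mathbb{E}[\xi\mid\mathcal{F}_{\tau-}]$ a.s.\ gives $\mathcal{F}_{\tau-}$-measurability on $\{\tau<\infty\}$. This is harmless, since they hold for $\mathbb{F}^{\xi^{\lambda,z},c}$ by Corollary~\ref{corfiltrationrightcontinuity}.
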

	\begin{proof}
		See, e.g., \cite[Proposition 25.19]{K}
	\end{proof}
	\begin{theorem}\label{thmappendixpredictable}
		Let $X$ be a Feller process with arbitrary initial distribution, and fix any optional time $\tau$. Then these conditions are equivalent:
		\begin{enumerate}
			\item[(i)] $\tau$ is predictable;
			\item[(ii)] $\tau$ is accessible;
			\item[(iii)]$X_{\tau-}=X_\tau$ a.s. on $\{\tau<\infty\}$.
		\end{enumerate} 
	\end{theorem}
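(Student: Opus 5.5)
The plan is to take the standard route through the quasi-left-continuity of Feller filtrations and the strong Markov property at predictable times. Let $X$ be Feller and complete its natural filtration in the usual way. I will prove the implications (i)$\Rightarrow$(ii), (ii)$\Rightarrow$(iii) and (iii)$\Rightarrow$(i) in turn.

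\textbf{(i)$\Rightarrow$(ii).} This implication is immediate, since every predictable time is accessible. It is announced by itself, as a countable family of one announcing sequence.

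\textbf{(ii)$\Rightarrow$(iii).} It suffices to treat a predictable $\tau$. For an accessible $\tau$, write $[\![\tau]\!]\subset\bigcup_n[\![\tau_n]\!]$ with $\tau_n$ predictable, and apply the predictable case to each restriction $\tau_{\{\tau=\tau_n\}}$. Let $\tau_k\uparrow\tau$ announce $\tau$ on $\{\tau<\infty\}$. Take $f,g\in C_0$ and $h>0$. By the strong Markov property of Feller processes at the optional times $\tau_k$,
\[
\mathbb{E}[f(X_{\tau_k})g(X_{\tau_k+h})]=\mathbb{E}[f(X_{\tau_k})\,T_hg(X_{\tau_k})].
\]
Let $k\to\infty$ using right-continuity of paths and existence of left limits. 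Since $\tau_k+h>\tau$ eventually, we get $X_{\tau_k+h}\to X_{\tau+h}$, while $X_{\tau_k}\to X_{\tau-}$. This gives
\[
\mathbb{E}[f(X_{\tau-})g(X_{\tau+h})]=\mathbb{E}[f(X_{\tau-})\,T_hg(X_{\tau-})].
\]
Now let $h\downarrow0$. Strong continuity of $T_h$ on $C_0$ yields $\mathbb{E}[f(X_{\tau-})g(X_\tau)]=\mathbb{E}[(fg)(X_{\tau-})]$. Because this holds for all $f,g\in C_0$, the pair $(X_{\tau-},X_\tau)$ has law concentrated on the diagonal, so $X_{\tau-}=X_\tau$ a.s. One point needs care here: the Feller process may have lifetime, or we must work on the one-point compactification, and $\{\tau<\infty\}$ has to be handled by restriction. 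I will deal with this by working on the compactified state space with $f,g\in C(\bar S)$.

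\textbf{(iii)$\Rightarrow$(i).} This is the main obstacle. I will use the quasi-left-continuity of the filtration of a Feller process: the jump times of $X$ are totally inaccessible, and every martingale $M_t=\mathbb{E}[\xi\mid\mathcal{F}_t]$ is continuous at predictable times. I would first show that $\mathcal{F}_{\tau-}=\mathcal{F}_\tau$ for predictable $\tau$, using the computation above extended to finitely many future times $\tau+h_1<\dots<\tau+h_n$, together with a monotone class argument. Then Appendix Theorem~(first in Appendix D) gives the equivalence of accessible and predictable times.

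It remains to reduce a general optional $\tau$ with $X_{\tau-}=X_\tau$ to an accessible one. Let $A$ be the Doob--Meyer compensator of $\mathbb{I}_{\{\tau\le t\}}$, and let $\sigma$ be the totally inaccessible part of $\tau$, so that $A$ has a continuous component charging $\sigma$. Pick a metric $\rho$ and $f\in C_0$ with $f(X_{\sigma-})\ne f(X_\sigma)$ wherever $X$ jumps. Consider the martingale
\[
M_t=\mathbb{E}\Big[\int_0^\infty e^{-s}f(X_s)\,ds\,\Big|\,\mathcal{F}_t\Big]=\int_0^te^{-s}f(X_s)\,ds+e^{-t}U^1f(X_t),
\]
with $U^1$ the resolvent. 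Since $U^1f$ lies in the domain of the generator and $U^1C_0$ separates points, jumps of $M$ detect jumps of $X$. The key claim is the converse: a totally inaccessible $\sigma$ is a jump time of some martingale of this form, which forces $X_{\sigma-}\ne X_\sigma$ on $\{\sigma<\infty\}$. I would prove this by showing that on $\{\sigma<\infty,\ X_{\sigma-}=X_\sigma\}$ every $U^1f(X)$ is continuous at $\sigma$. Since the filtration is generated by $X$ and such martingales span $L^2$, all martingales are then continuous at $\sigma$. By Kallenberg's characterization of totally inaccessible times via jumps of the compensated indicator martingale, this forces $\mathbb{P}(\sigma<\infty)=0$. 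Hence $\tau$ is accessible, and therefore predictable.
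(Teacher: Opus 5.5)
The paper proves nothing here; it only cites Kallenberg's Proposition 25.20. Your architecture is the standard one: strong Markov along an announcing sequence for (ii)$\Rightarrow$(iii), then quasi-left-continuity plus ``a totally inaccessible time is a jump time of $X$'' for (iii)$\Rightarrow$(i). But the decisive step rests on a false claim.

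\textbf{The main gap.} The martingales $M^f_t=\mathbb{E}[\int_0^\infty e^{-s}f(X_s)\,ds\mid\mathcal{F}_t]$, $f\in C_0$, do \emph{not} span $L^2(\mathcal{F}_\infty)$. Their terminal values lie in the closed linear span of $\{f(X_t): t\ge 0,\ f\in C_0\}$, which is a proper subspace. For Brownian motion, every element of that span has a second-chaos kernel depending on $(s,u)$ only through $\max(s,u)$, and this fails for $B_1B_2$. So continuity of all $M^f$ at $\sigma$ says nothing about the compensated martingale $L_t=\mathbb{I}_{\{\sigma\le t\}}-A_t$.

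The repair is to use the multiplicative class $F=\prod_{i=1}^n f_i(X_{t_i})$, $t_1<\dots<t_n$, $t_0=0$:
\begin{itemize}
\item Set $\psi_n=f_n$ and $\psi_i=f_i\,T_{t_{i+1}-t_i}\psi_{i+1}$. The Markov property gives $N_t:=\mathbb{E}[F\mid\mathcal{F}_t]=\prod_{i\le k}f_i(X_{t_i})\,T_{t_{k+1}-t}\psi_{k+1}(X_t)$ for $t\in[t_k,t_{k+1})$.
\item The map $(t,x)\mapsto T_{t_{k+1}-t}\psi_{k+1}(x)$ is jointly continuous by the Feller property.
\item A totally inaccessible $\sigma$ charges no fixed time, so these martingales do not jump at $\sigma$ on $\Gamma=\{\sigma<\infty,\ X_{\sigma-}=X_\sigma\}$.
\item The class is closed under products and generates $\mathcal{F}_\infty$. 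A monotone class argument, together with Doob's inequality $\mathbb{E}\sup_t|N^n_t-N_t|^2\le 4\,\mathbb{E}|N^n_\infty-N_\infty|^2$, extends ``no jump at $\sigma$ on $\Gamma$'' to every $L^2$-martingale.
\item Apply this to $L$. It is in $L^2$, since $\mathbb{E}[A_\infty-A_s\mid\mathcal{F}_s]\le 1$ gives $\mathbb{E}[A_\infty^2]\le 2$, and $\Delta L_\sigma=1$ on $\{\sigma<\infty\}$. Hence $\mathbb{P}(\Gamma)=0$.
\end{itemize}
The same martingales also give the cleanest proof of $\mathcal{F}_{\tau-}=\mathcal{F}_\tau$ for predictable $\tau$: there $N_{\tau-}=\mathbb{E}[F\mid\mathcal{F}_{\tau-}]$ and $N_\tau=\mathbb{E}[F\mid\mathcal{F}_\tau]$, and they coincide once $X_{\tau-}=X_\tau$.

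\textbf{Two slips in (ii)$\Rightarrow$(iii).} First, since $\tau_k<\tau$ on $\{\tau>0\}$, $\tau_k+h$ increases to $\tau+h$ from the left. So $X_{\tau_k+h}\to X_{(\tau+h)-}$, not $X_{\tau+h}$; the fact that $\tau_k+h>\tau$ eventually is beside the point. Your conclusion survives because $X_{(\tau+h)-}\to X_\tau$ as $h\downarrow 0$. In the multi-time extension for quasi-left-continuity, however, you must keep the $h_i$ outside the countable set of fixed discontinuities of $h\mapsto X_{\tau+h}$.

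Second, the restriction of a predictable $\tau_n$ to $\{\tau=\tau_n\}\in\mathcal{F}_{\tau_n}$ is not known to be predictable at that stage. Apply the predictable case to $\tau_n$ itself instead, which gives $X_{\tau-}=X_\tau$ on $\{\tau=\tau_n<\infty\}$.

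Finally, your remarks about a single $f$ with $f(X_{\sigma-})\neq f(X_\sigma)$ at every jump, and about $U^1C_0$ separating points, concern the converse direction (jumps of $X$ force jumps of martingales). They play no role in the argument and can be dropped.
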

	\begin{proof}
		See, e.g., \cite[Proposition 25.20]{K}
	\end{proof}
	\begin{theorem}\label{thmappendixcompensator}
		Let $T$ be a stopping time with compensator $A$. Then:
		\begin{enumerate}
			\item[(i)] $T$ is predictable if and only if $A$ is almost surely constant apart from a possible unit jump;
			\item[(ii)] $T$ is accessible if and only if $A$ is almost surely purely discontinuous;
			\item[(iii)] $T$ is totally inaccessible if and only if $A$ is almost surely continuous.
		\end{enumerate}
	\end{theorem}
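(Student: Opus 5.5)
The proof goes through the Doob--Meyer decomposition of the bounded submartingale $\mathbb{I}_{\{T\le t\}}$. Let $T$ be a stopping time. The process $N_t=\mathbb{I}_{\{T\le t\}}$ is a bounded, increasing, right-continuous submartingale, so it admits a unique predictable increasing compensator $A$ with $A_0=0$ such that $M=N-A$ is a uniformly integrable martingale. I would first record two facts.

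\begin{enumerate}
\item[(a)] $A$ is constant after $T$, i.e. $A=A^T$. This follows from uniqueness, because $N$ is stopped at $T$.
\item[(b)] For every predictable time $\tau$ we have $\mathbb{E}[\Delta A_\tau\,\mathbb{I}_{\{\tau<\infty\}}]=\mathbb{P}(T=\tau<\infty)$. This is the defining property of the dual predictable projection applied to the predictable set $[\![\tau]\!]$. Moreover, $\Delta A_\tau=\mathbb{P}(T=\tau\mid\mathcal{F}_{\tau-})$ on $\{\tau<\infty\}$.
\end{enumerate}

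For (iii), suppose first that $T$ is totally inaccessible, so $\mathbb{P}(T=\tau<\infty)=0$ for every predictable $\tau$. The jumps of the predictable process $A$ are exhausted by a sequence of predictable times $\tau_n$. By (b), $\mathbb{E}[\Delta A_{\tau_n}]=0$, hence $A$ is continuous. Conversely, if $A$ is continuous, then (b) gives $\mathbb{P}(T=\tau<\infty)=0$ for all predictable $\tau$, which is exactly total inaccessibility.

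For (i), suppose $T$ is predictable. Then $[\![T,\infty[\![$ is a predictable set, so $N$ is itself predictable and $A=N$ by uniqueness. This means $A$ is constant apart from a unit jump at $T$. Conversely, suppose $A$ is a.s. constant except for a possible unit jump. Since $A\ge0$ is increasing and stopped at $T$, and $\mathbb{E}[A_\infty]=\mathbb{P}(T<\infty)$, we get $A=\mathbb{I}_{[\![S,\infty[\![}$ for the random time $S=\inf\{t:A_t\ge1\}$. This $S$ is a predictable time because it is the début of a predictable set with closed sections. Then $M=N-A$ is a martingale of finite variation taking values in $\{-1,0,1\}$. Using (b) with $\tau=S$ gives $\mathbb{P}(T=S<\infty)=\mathbb{E}[\Delta A_S;S<\infty]=\mathbb{P}(S<\infty)$, and also $\mathbb{P}(T<\infty)=\mathbb{E}A_\infty=\mathbb{P}(S<\infty)$. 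Hence $T=S$ a.s. on the relevant events, so $T$ is predictable.

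For (ii), I would use the decomposition $T=T_{\mathcal{A}}\wedge T_{\mathcal{I}}$ into an accessible part and a totally inaccessible part. Here $\mathcal{A}=\bigcup_n\{T=\tau_n<\infty\}$, with $(\tau_n)$ chosen to exhaust the jump times of $A$. The compensator then splits as $A=A^{\mathcal A}+A^{\mathcal I}$, where $A^{\mathcal I}$ is continuous by (iii) and $A^{\mathcal A}$ is purely discontinuous. The purely discontinuous part is $\sum_n\Delta A_{\tau_n}\mathbb{I}_{\{\tau_n\le t\}}$, which is the compensator of the accessible part by (b) summed over $n$. It follows that $T$ is accessible iff $\mathbb{P}(T_{\mathcal I}<\infty)=0$, iff $A^{\mathcal I}\equiv0$, iff $A$ is purely discontinuous. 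For the middle step, note that $\mathbb{E}A^{\mathcal I}_\infty=\mathbb{P}(T_{\mathcal I}<\infty)$.

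The main obstacle is the converse direction in (i): turning a unit-jump compensator into the predictability of $T$. Rather than a direct construction, I would argue through accessibility. By (ii), $T$ is accessible because $A$ is purely discontinuous, so the graph of $T$ is covered by the graphs of the predictable times $\tau_n$. The conditional probabilities $\Delta A_{\tau_n}=\mathbb{P}(T=\tau_n\mid\mathcal{F}_{\tau_n-})\in\{0,1\}$ then show that each event $\{T=\tau_n\}$ lies in $\mathcal{F}_{\tau_n-}$ up to null sets. Since the restriction of a predictable time to an $\mathcal{F}_{\tau-}$-measurable event is again predictable, $T$ is the infimum of countably many predictable times with disjoint graphs, and is therefore predictable. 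Alternatively, this converse can simply be cited from \cite[Corollary 25.18]{K}.
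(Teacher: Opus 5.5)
Your proof is correct, but it does not follow the paper's route, because the paper gives no argument here: it simply cites \cite[Corollary 25.18]{K}. You instead rebuild the result from the Doob--Meyer decomposition of $\mathbb{I}_{\{T\le t\}}$ and one identity, $\Delta A_\tau=\mathbb{P}(T=\tau\mid\mathcal{F}_{\tau-})$ on $\{\tau<\infty\}$ for predictable $\tau$. From it, (iii) is immediate. Part (i) follows via the predictable time $S=\inf\{t:A_t\ge1\}$ and the equalities $\mathbb{P}(T=S<\infty)=\mathbb{P}(S<\infty)=\mathbb{P}(T<\infty)$. Part (ii) follows by splitting $A$ into its continuous part and its jumps along an exhausting sequence of predictable times. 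The citation buys brevity and hands off the general-theory inputs you use without proof: exhaustion of the jumps of a predictable process by predictable times, predictability of $S$, and the usual conditions, which hold here by Corollary~\ref{corfiltrationrightcontinuity}. Your version shows directly why, in Corollary~\ref{cortotallyinaccessible}, continuity of $A^{\lambda,z}$ is the same thing as $\sigma_z^\lambda$ charging no predictable time.

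A few small corrections.

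\begin{itemize}
\item Your direct argument with $S$ already settles the converse of (i), so it is not an obstacle. In your alternative route through accessibility, the stated reason (``infimum of countably many predictable times with disjoint graphs, hence predictable'') is false. If $T$ is totally inaccessible, the predictable times $T+1/n$ have disjoint graphs, yet their infimum is $T$. What actually makes that route work is that the infimum is attained. Let $T_n$ be the restriction of $\tau_n$ to $\{T=\tau_n\}$, which lies in $\mathcal{F}_{\tau_n-}$ up to null sets by your $\{0,1\}$ argument. Then $[\![T,\infty[\![=\bigcup_n[\![T_n,\infty[\![$ up to evanescence, which is a predictable set.
\item In (ii), read off the continuity of $A^{\mathcal I}$ from $A^{\mathcal I}=A-\sum_n\Delta A_{\tau_n}\mathbb{I}_{\{\tau_n\le\cdot\}}=A^c$, not from (iii). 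The logic runs the other way: (iii) then gives total inaccessibility of $T_{\mathcal I}$, and you need that for the step ``$T$ accessible $\Rightarrow\mathbb{P}(T_{\mathcal I}<\infty)=0$''.
\item With your convention $A_0=0$, fact (b) fails at $\tau=0$ when $\mathbb{P}(T=0)>0$. For $T\equiv0$ you get $A\equiv0$, which is continuous, although $T$ is predictable. So either assume $T>0$ a.s.\ or set $A_0=\mathbb{I}_{\{T=0\}}$. This is harmless for the paper, since $\sigma_z^\lambda>0$ a.s.
\end{itemize}
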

	\begin{proof}
		See, e.g., \cite[Corollary 25.18]{K}
	\end{proof}

\end{document}